\documentclass[a4paper,oneside,10pt]{amsart}

\usepackage{amsaddr}

\usepackage[english]{babel}
\usepackage[utf8]{inputenc}
\usepackage{svg}
\usepackage[a4paper,top=2cm,bottom=2cm,left=2cm,right=2cm,marginparwidth=1.75cm]{geometry}
\usepackage{mathtools,amssymb,amsthm,bbold,mathrsfs}
\usepackage{tikz-cd, tikz}
\usepackage{graphicx}
\usepackage[colorlinks=true, allcolors=blue]{hyperref}
\usepackage{adjustbox}
\usepackage{tensor}
\usepackage{mathrsfs}

\usepackage[backend=biber, maxnames=50, style=alphabetic]{biblatex}
\addbibresource{Bibliography.bib}

\usepackage{setspace}
\onehalfspacing
\allowdisplaybreaks

\tikzset{
    labl/.style={anchor=south, rotate=90, inner sep=.5mm}
}
\theoremstyle{plain}
\newtheorem{prop}{Proposition}[section]
\newtheorem{lem}[prop]{Lemma}
\newtheorem{thm}[prop]{Theorem}

\theoremstyle{definition}

\newtheorem{defn}[prop]{Definition}
\newtheorem{rem}[prop]{Remark}

\newcommand*\circled[1]{\tikz[baseline=(char.base)]{
            \node[shape=circle,draw,inner sep=2pt] (char) {#1};}}
\newcommand{\Sym}{\operatorname{Sym}}
\newcommand{\extbr}{\widetilde{b}}
\newcommand{\extcobr}{\widetilde{c}}
\newcommand{\br}{b}
\newcommand{\cobr}{c}
\newcommand{\id}{\textnormal{id}}
\title{Graded Necklace Lie Bialgebras and Batalin--Vilkovisky Formalism}
\author{Nikolai Perry and Ján Pulmann}
\address{School of Mathematics, University of Edinburgh, Edinburgh, UK}
\email{nperry@ed.ac.uk, jan.pulmann@gmail.com}

\begin{document}

\begin{abstract}
   An involutive Lie bialgebra induces a Batalin-Vilkovisky operator on its exterior algebra. We introduce a graded generalization of the necklace Lie bialgebra, which depends on a choice of a quiver $Q$. We relate the resulting Batalin-Vilkovisky structure to the Batalin-Vilkovisky structure coming from a degree $-1$ symplectic form on a suitably defined representation variety of the quiver $Q$. The morphism intertwining these Batalin-Vilkovisky algebras will be given by a twisted trace, recovering the usual (super)trace and the odd trace.
\end{abstract}

\maketitle

\section{Introduction}

The necklace Lie algebra $\mathscr A$ is a Lie algebra constructed from a quiver $Q$ --- it is given by the free  vector space on the set of all cyclic paths in $\overline{Q}$ (the double of $Q$), together with a Lie bracket that glues these paths together at all pairs of arrows in involution. This Lie algebra is related to the representation variety of $\overline{Q}$ --- which carries a natural symplectic structure --- via a trace map, which intertwines the Lie bracket and the canonical Poisson bracket \cite{BLB, Gin}. 

A Lie cobracket compatible with the original necklace Lie bracket was then introduced by Schedler \cite{Schedler}, turning $\mathscr A$ into an involutive Lie bialgebra. In this paper, we relate this Lie bialgebra to an odd symplectic structure on the representation variety. Namely, following a similar construction for surfaces \cite{ANPS}, we start with the following simple observation. The necklace Lie bialgebra, being involutive, can be encoded in a Batalin-Vilkovisky (BV) operator on the super-commutative algebra $\bigwedge \mathscr A$. On the other hand, we consider a subspace of the representation variety of the doubled quiver $\overline{Q}$ given by linear maps intertwining fixed odd endomorphisms associated to vertices. This subspace carries an odd non-degenerate pairing, which induces a canonical BV operator on $\widetilde{\mathcal{O}}$, the algebra of functions on the intertwining representation variety. Our first result is a proof that a natural trace map $\bigwedge \mathscr A \to \widetilde{\mathcal O}$ is a morphism of BV algebras.

In addition, inspired by Barannikov \cite{BarannikovModularOperads, BarannikovNoncommutativeBatalinVilkoviskyGeometry, BarannikovMatrixDeRhamcomplex, BarannikovSolving}, we introduce a parameter $p \in \mathbb Z_2$ to the above construction. The case $p=0$ relates the usual Schedler necklace Lie bialgebra with a quiver version of the representation variety for the queer Lie superalgebra; this is the quiver analogue of \cite[Sec.~4]{BarannikovMatrixDeRhamcomplex} and \cite{ANPS}. In the case of $p=1$, the necklace Lie bialgebra operations have degree $-1$, while the representation variety more closely resembles the usual representation variety of the quiver. Barannikov's BV algebras constructed for even inner products ($p=0$) and odd inner products ($p=1$) correspond to a one-vertex quiver in the present work. 

\subsection{Relationship with other works}
Let us now elaborate more on how the present works fits with existing literature. As we mentioned above, there is a parallel story, with quivers replaced by surfaces. The relevant Lie bialgebra was described by Goldman \cite{GoldmanInvariant}, Turaev \cite{Turaev}, and Chas \cite{Chas2004}; Goldman related his bracket to the Atiyah-Bott structure on the character variety of the surface \cite{AtiyahBott, GoldmanInvariant}, while a similar construction involving the cobracket was recently given by \cite{ANPS}, using Batalin-Vilkovisky algebras and the character variety for the queer Lie supergroup. 

One can understand quivers in the present work as infinitesimal versions of surfaces, and quiver varieties as infinitesimal versions of character varieties of surfaces. These are related by so-called expansions \cite[Sec.~2.2]{BarNatanDancso}, which can be understood as universal formal Poisson morphisms from the quiver variety to the representation variety \cite[Sec.~1.2]{KawazumiKunoLogarithms} \cite{MassuyeauTuraevFox, Naef}; see also \cite[Thm.~6.17]{AKKN} and references therein for further developments. The relevant quivers are star-shaped in this case, it is natural to expect that more general quivers are related to marked quilted surfaces of \cite{LBSQuilted}. 
\medskip

The analogue of the work of Goldman and Turaev in the world of quivers is due to Bocklandt--Le Bruyn, Ginzburg and Schedler \cite{BLB, Gin, Schedler}. For a quiver with a single vertex, Schedler's Lie bialgebra was related to the canonical Batalin-Vilkovisky structure on the representation variety by Barannikov in \cite[Sec.~4]{BarannikovMatrixDeRhamcomplex}. Let us mention in passing that this Lie bialgebra is a special case of Barannikov's construction in \cite{BarannikovModularOperads} obtained for a choice of the associative modular operad. Furthermore, Barannikov also considered the possibility of  assigning an \emph{odd} degree to the new edges of the doubled (one-vertex) quiver. 
\medskip

With this context in mind, we can now characterize our contribution in two ways: 
\begin{itemize}
    \item the present work is to the works of \cite{BLB, Gin, Schedler}  as is \cite{ANPS} to \cite{GoldmanInvariant, Turaev}, and
    \item the present work is the generalization to arbitrary quivers of the work of Barannikov \cite{BarannikovModularOperads, BarannikovNoncommutativeBatalinVilkoviskyGeometry, BarannikovMatrixDeRhamcomplex, BarannikovSolving}.
\end{itemize}
In fact, to orient oneself in the literature above, it might be helpful to explicitly state the possible binary ``axes'' on which it is possible to move:
\smallskip

\begin{center}
    \centering
    \begin{tabular}{lcl}
         quivers &$\leftrightarrow$ & surfaces \\
         no representation variety &$\leftrightarrow$ &geometry of representation variety \\
         
         Lie algebras, Poisson brackets &$\leftrightarrow$ &Lie bialgebras, BV operators \\
         $p=0$ &$\leftrightarrow$& arbitrary $p$ \\
         single vertex quivers &$\leftrightarrow$& arbitrary quivers \\
         associative operad &$\leftrightarrow$& arbitrary operads  \\
         classical &$\leftrightarrow$& quantum \\
    \end{tabular}
\end{center}
\smallskip

Our work corresponds to columns LRRRRLL. The above table suggests possible future directions of research, such as combinations of quivers and operads, surfaces with an odd intersection pairing or various quantizations. 
\medskip

Finally, let us mention that Lie bialgebras with non-zero degrees of the bracket and the cobracket appeared in multiple different contexts \cite{MWLieBialgProp, KMW2016, CFL}; we compare them to our version in Appendix \ref{Appendix:IBL}.

\subsection{Conventions}
The ground field is fixed to be $\mathbb{C}$, for convenience. All diagrams are to be read bottom-to-top and understood in the symmetric monoidal category of $\mathbb Z_2$-graded vector spaces (i.e. with degree-preserving maps as morphisms).

\begin{rem}
    Our original goal was to allow for $\mathbb Z$-graded vector spaces, but for reasons explained in Appendix \ref{Appendix:ZZ2} we decided to use $\mathbb Z_2$-graded vector spaces throughout. However, we still specify the would-be $\mathbb Z$-grading wherever possible, i.e. our BV operator $\Delta$ is said to be of degree $1$, which means it is odd. See Appendix \ref{Appendix:ZZ2} for an explanation on which parts generalize to the  $\mathbb Z$-graded setting. 
\end{rem}

\subsection{Organization of the paper}
In \S\ref{section: GLBs and BV Algebras} we remind the reader of BV algebras and their relation to involutive Lie bialgebras. Then, in Definition \ref{defn: GLBv2} we introduce Lie bialgebras where the bracket and the cobracket have identical but possibly non-zero degree. We give our $\mathbb{Z}$-graded generalisation of Schedler's necklace Lie bialgebra in \S\ref{section: GNLBs} and introduce the corresponding BV algebra. In \S\ref{section: BV Algebra on a Representation Variety} we define a BV algebra on a representation variety of the relevant quiver in coordinate-free language, after which we give its coordinate description which is useful for later proofs. Finally, in \S\ref{section: A BV algebra 
morphism}, we construct a BV morphism between these BV algebras. 

Finally we comment on the issue of $\mathbb Z$ and $\mathbb Z_2$ gradings in Appendix \ref{Appendix:ZZ2}, review existing notions of graded Lie bialgebras in Appendix \ref{Appendix:IBL} and sketch a graphical proof of the Lie bialgebra axioms for the necklace Lie bialgebra in Appendix \ref{Appendix:Cals}. 

\subsection{Acknowledgements} We would like to thank Anton Alekseev for his interest in our work and numerous useful discussions prior to and during writing this paper. J.P. would also like to thank Florian Naef for discussions, and Sebastian Schlegel Mejia, Shivang Jindal and Sarunas Kaubrys for answering his questions about quiver varieties. 

Research of J.P. was supported by the Postdoc.Mobility grant 203065 of the SNSF. Research of N.P. was supported by the School of Mathematics research scholarship (UoE).

\section{Lie bialgebras and Batalin-Vilkovisky algebras}
\label{section: GLBs and BV Algebras}
  
    In this section, we will recall the well-known construction of Batalin-Vilkovisky algebras from involutive Lie bialgebras \cite[Sec.~5]{CMW} or \cite[Sec.~2]{CFL}. We will describe a variant for $\mathbb Z_2$-graded Lie bialgebras, with the bracket and the cobracket of the same but not necessarily even parity. To keep the sign issues manageable, we will dutifully use diagrammatic calculus, i.e. string diagrams in the symmetric monoidal category of supervector spaces (see e.g. \cite{Selinger2010} for graphical language for symmetric monoidal categories). 
    
     
    \begin{defn}[BV Algebra]\label{defn: BV algebra}
        A \emph{$\mathbb Z_2$-graded Batalin--Vilkovisky (BV) algebra} is a $\mathbb Z_2$-graded unital supercommutative algebra $\mathscr{B}$ together with a degree 1 second-order differential operator $\Delta \colon \mathscr{B} \to \mathscr{B}$ satisfying $\Delta^2 = 0$ and $\Delta(1) = 0$.
    \end{defn}
    \begin{rem} \label{rem:7T}
        In Definition \ref{defn: BV algebra}, the condition that $\Delta$ is a second-order differential operator is given by the well-known \textit{seven term identity}
        \begin{align*}
            \Delta(xyz) &- \Delta(xy)z - (-1)^{x(y+z)} \Delta(yz)x - (-1)^{z(x+y)}\Delta(zx)y\\
            &+ \Delta(x)yz + (-1)^{x(y+z)}\Delta(y)zx + (-1)^{z(x+y)}\Delta(z)xy = 0,
        \end{align*}
        for all (homogeneous) $x,y,z \in \mathscr{B}$. Note that letting $x,y,z=1$ in this condition implies $\Delta(1)=0$. For a general differential operator with a possibly non-zero constant term, the correct notion of being second order has an additional term $\Delta(1)xyz$ on the RHS.
    \end{rem}
    Let us now assume that $\mathscr{B}$ is equal to $\Sym {V}$ as a supercommutative algebra, for some $\mathbb Z_2$-graded vector space $V$. Moreover, we will only work with BV operators $\Delta$ constructed as follows. Let $\br \colon V\otimes  V \to  V$ and $\cobr \colon V \to  V\otimes  V$ be degree $1$ linear maps satisfying the following symmetry properties:
    \begin{equation}\label{eqn: symmetry properties}
        \includegraphics[scale=1.2]{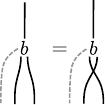}\;\;,\quad
        \includegraphics[scale=1.2]{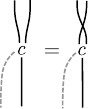}.
    \end{equation}
    The diagrams are to be read from bottom to top. The dashed gray line denotes the 1-dimensional vector space $S$ concentrated in degree 1, i.e. instead of a degree $1$ map $\br \colon V \otimes V \to V$ we consider a map $S\otimes V \otimes V \to V$. The crossing denotes the symmetry morphism $V\otimes V \to V \otimes V$ with the usual Koszul sign.

    Thanks to these symmetry properties, we can define degree $1$ maps  $\extbr\colon \Sym^2 V \to \Sym^1 V$ and $\extcobr:\Sym^1 V \to \Sym^2 V$. They are related to $\br$ and $\cobr$ by
    \begin{equation*}
        \includegraphics[scale=1.2]{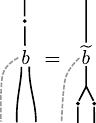}\;\;,\quad
        \includegraphics[scale=1.2]{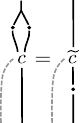},
    \end{equation*}
    where the dots denote the inclusion $V \hookrightarrow \Sym  V$. 
    
    We then extend $\extbr$ to a degree 1 second order differential operator\footnote{This is equivalent to requiring that $\extbr$ satisfies the seven-term identity from Remark \ref{rem:7T}.} on $\Sym V$. That is, on a symmetric product $x_1\cdots x_n$ with $n \geq 2$ (where each $x_i \in \Sym^{1} V = V$) we have
    \begin{equation*}
        \extbr(x_1\cdots x_n) = \sum_{i < j} (-1)^{\varepsilon} \extbr(x_i x_j) \, x_1 \cdots \hat{x}_i \cdots \hat{x}_j \cdots x_n,
    \end{equation*}
    where $\varepsilon$ denotes the appropriate Koszul sign and the hats denote omission, and $\extbr$ vanishes on $\Sym^{\leq 1}V$.

    Similarly, we extend $\extcobr$ to a degree 1 derivation on $\Sym V$. That is, on a symmetric product $x_1 \cdots x_n$ with $n \geq 1$ we have
    \begin{equation*}
        \extcobr(x_1\cdots x_n) = \sum_i (-1)^{\varepsilon}\extcobr(x_i)x_1\cdots\hat{x}_i\cdots x_n, 
    \end{equation*}
    and $\extcobr$ vanishes on $\Sym^0 V \cong \mathbb C$.

    Let us now give the conditions which make $\extbr + \extcobr$ into a BV operator.
    \begin{prop}\label{prop:constructing BVA} Let $\br \colon V \otimes V \to V$ and $\cobr \colon V \to V \otimes V$ be degree 1 maps satisfying \eqref{eqn: symmetry properties}. Then $\Delta = \extbr + \extcobr$ defines a structure of a BV algebra on $\Sym V$ if and only if the following four equalities involving $\br$ and $\cobr$ hold:
        \begin{equation}\label{eqn: GLB conditions}
            {\includegraphics[scale=1.2]{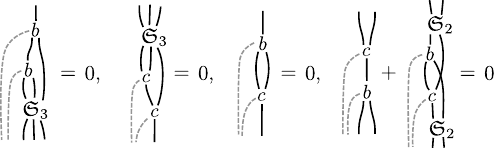}}\;\;.
        \end{equation}
    \end{prop}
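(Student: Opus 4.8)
The plan is to check the three defining conditions of a BV algebra (Definition \ref{defn: BV algebra}) in turn; only the last, $\Delta^2 = 0$, carries any content. That $\Delta(1) = \extbr(1) + \extcobr(1) = 0$ is immediate, since $\extbr$ kills $\Sym^{\leq 1} V$ and $\extcobr$ kills $\Sym^0 V$. That $\Delta$ is a second-order differential operator holds unconditionally: the seven-term identity of Remark \ref{rem:7T} is linear in the operator, $\extbr$ satisfies it by construction (the footnote), and $\extcobr$, being a derivation, satisfies the Leibniz rule and hence a fortiori the seven-term identity, so their sum $\Delta$ satisfies it too.

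For $\Delta^2 = 0$, expand $\Delta^2 = \extbr^2 + (\extbr\,\extcobr + \extcobr\,\extbr) + \extcobr^2$. The crucial observation is that $\extbr$ lowers the symmetric degree by $1$ and $\extcobr$ raises it by $1$, so these three summands are homogeneous of symmetric degrees $-2$, $0$ and $+2$ respectively; hence $\Delta^2 = 0$ if and only if each of them vanishes separately. I will show that each vanishing is equivalent to the vanishing of a restriction to one or two low symmetric degrees, which --- once the Koszul signs and the auxiliary degree-$1$ line $S$ are tracked, the very reason for working diagrammatically --- is precisely one or two of the equalities in \eqref{eqn: GLB conditions}. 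Two standard facts will be used: the graded commutator of differential operators of orders $p$ and $q$ has order $\leq p+q-1$; and a differential operator of order $\leq k$ annihilating $1$ is determined by its restriction to the span of products of at most $k$ generators (for $k=2$ this is one application of the seven-term identity, with the three arguments $x_1\cdots x_{n-2}$, $x_{n-1}$, $x_n$, plus induction on $n$).

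Concretely: $\extcobr^2 = \tfrac{1}{2}[\extcobr,\extcobr]$ is the graded self-commutator of an odd derivation, hence again a derivation, hence determined by its restriction to $V = \Sym^1 V$, where it computes the double cobracket; so $\extcobr^2 = 0$ is the co-Jacobi identity. The mixed term $\extbr\,\extcobr + \extcobr\,\extbr$ is the graded commutator of operators of orders $2$ and $1$, hence of order $\leq 2$, and it kills $1$, so it is determined by its restrictions to $\Sym^1 V$ and $\Sym^2 V$; on $\Sym^1 V$ it equals $\br\circ\cobr$ (since $\extbr$ kills $\Sym^1 V$), giving the involutivity equation, and on $\Sym^2 V$ it gives the bracket--cobracket (Drinfeld cocycle) compatibility. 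Finally $\extbr^2 = \tfrac{1}{2}[\extbr,\extbr]$ has order $\leq 3$ and annihilates $\Sym^{\leq 2} V$ (as $\extbr$ maps $\Sym^2 V$ into $\Sym^1 V$, which it then kills), hence is determined by its restriction to $\Sym^3 V$, where it is the Jacobiator of $\br$; so $\extbr^2 = 0$ is the Jacobi identity. Collecting these gives the four equalities of \eqref{eqn: GLB conditions}; conversely, assuming those four, the same determined-by-low-degree statements force all three summands, hence $\Delta^2$, to vanish.

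The step I expect to be the main obstacle is the last one in each case: carefully matching each low-degree restriction --- with all Koszul signs and $S$-lines in place --- to the corresponding composite of $\br$ and $\cobr$ appearing in \eqref{eqn: GLB conditions}. This is routine in the diagrammatic calculus but is where all the signs actually live, and it is worth doing the $\Sym^2 V$ computation of the mixed term in full detail, since that is the one producing the less familiar degree-$1$ version of the Drinfeld compatibility.
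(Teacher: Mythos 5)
Your proposal is correct and follows essentially the same route as the paper: both reduce $\Delta^2=0$ to its restriction on $\Sym^{\leq 3}V$ via the order of the (anti)commutators, separate the summands by polynomial degree, and identify the four low-degree restrictions with Jacobi, co-Jacobi, involutivity and the cocycle condition. The sign-sensitive verifications you defer at the end are exactly the diagrammatic computations the paper's proof carries out explicitly.
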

    Here, $\mathfrak{S}_n$ are the cyclic symmetrisers
    \begin{align*}
         \mathfrak{S}_n \colon V^{\otimes n} &\to  V^{\otimes n}; \\
         \quad x_1 \otimes \cdots \otimes x_n &\mapsto x_1 \otimes \cdots \otimes x_n \pm x_2 \otimes \cdots \otimes x_1 \pm \dots \pm  x_n \otimes \cdots \otimes x_1,
    \end{align*} with $\pm$ given by the Koszul sign of the cyclic permutation.
    
    The equalities in \eqref{eqn: GLB conditions} are shifted versions the \textit{Jacobi identity}, the \textit{co-Jacobi identity}, \textit{involutivity}, and the \textit{cocycle condition}, respectively. In standard notation, these can be written as
    \begin{align}
        &\br(\br \otimes 1) \mathfrak{S}_3 = 0, \quad
        \mathfrak{S}_3(\cobr \otimes 1) \cobr = 0, \quad 
        \br \cobr = 0, \nonumber \\
        &\cobr(\br(x,y)) = (-1)^{x+1}\mathrm{ad}_x (\cobr(y)) + (-1)^{(x+1)y + 1}\mathrm{ad}_y (\cobr(x)), \label{eq:cocycle}
    \end{align}
    for arbitrary homogeneous $x,y \in V$, where $\mathrm{ad}_x$ acts on $V \otimes V$ as $\mathrm{ad}_x \otimes 1 + 1 \otimes \mathrm{ad}_x$ (and on $V$, $\mathrm{ad}_x$ is simply $\br(x,-)$).
 
    \begin{proof}
Let $I_n$ denote the natural injective map 
    \begin{equation*}
        \Sym^{n} V \hookrightarrow  V^{\otimes n};\quad
        x_1 \cdots x_n \mapsto \sum_{\sigma \in S_n}(-1)^{\varepsilon}x_{\sigma(1)} \otimes \cdots \otimes x_{\sigma(n)},
    \end{equation*} where $\varepsilon$ denote the relevant Koszul signs. To prove that we get a BV algebra structure on $\Sym V$, we just need to show that $\Delta^2 = 0$. Since $\Delta$ is odd, we have that $\Delta^2 = \frac{1}{2}[\Delta,\Delta]$ is third order. Thus, $\Delta^2$ is identically zero precisely when it vanishes on the subspace $\Sym^{\leq 3} V \subset \Sym V$. It is immediate from the definition that $\Delta^2$ is zero on $\Sym^{0} V \cong  \mathbb{C}$. We thus consider $\Sym^{i} V$ for $i=1,2,3$.

    \begin{itemize}
        \item On $\Sym^{1} V \cong V$:\\
        For any $x \in V$, we have $\Delta^2(x) = \extbr(\extcobr(x)) + \extcobr(\extcobr(x))$. Since $\extbr(\extcobr(x))$ and $\extcobr(\extcobr(x))$ have distinct polynomial degrees, it follows that $\Delta^2$ vanishes on $\Sym^{1} V$ precisely when both $\extbr\, \extcobr$ and $\extcobr\,^2$ do. For the $\extbr\, \extcobr$ term we have
        \begin{equation*}
            {\includegraphics[scale = 1.3]{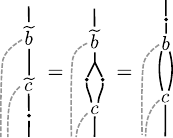}}\;\;,
        \end{equation*}
        and so, composing with $I_1$ (which is injective) and using that
        \begin{equation*}
            {\includegraphics[scale = 1.3]{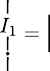}}\;\;,
        \end{equation*}
        we see that $\extbr\,\extcobr$ is zero on $\Sym^{1} V$ if and only if $\br\cobr = 0$ --- involutivity.
        As for $\extcobr\,^2$, we have
        \begin{equation*}
            {\includegraphics[scale=1.2]{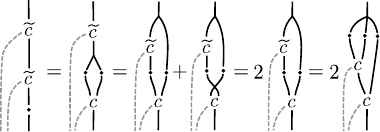}}.
        \end{equation*}
        Composing this term with $I_3$ gives
        \begin{equation*}
            {\includegraphics[scale=1.2]{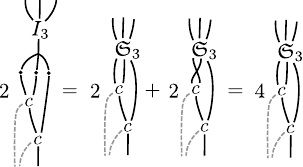}}\;\;,
        \end{equation*}
        from which it follows that $\extcobr\,^2$ is zero on $\Sym^{1} V$ if and only if $\mathfrak{S}_3(\cobr \otimes 1)\cobr$ = 0 --- co-Jacobi. Notice that, since $\extcobr\,^2 = \frac{1}{2}[\,\extcobr,\extcobr\,]$ has order 1 (and $\extcobr$ already vanishes on $\Sym ^0 V$), this condition is equivalent to that of $\extcobr\,^2$ vanishing on $\Sym V$.

        \item On $\Sym^{2} V$:\\
        Here we assume that $\Delta^2$ vanishes on $\Sym^{\leq1}V$. Then $\extcobr\,^2 = 0$ as above, so for any $x,y \in V$ we have $\Delta^2(xy) = \extbr(\extcobr(xy)) + \extcobr(\extbr(xy)) = [\,\extbr,\extcobr \,](xy)$. For the $\extcobr\, \extbr$ term we have
        \begin{equation*}
            {\includegraphics[scale=1.2]{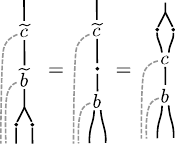}}\;\;,
        \end{equation*}
        while for the $\extbr \, \extcobr$ term,
        \begin{equation*}
            {\includegraphics[scale=1.2]{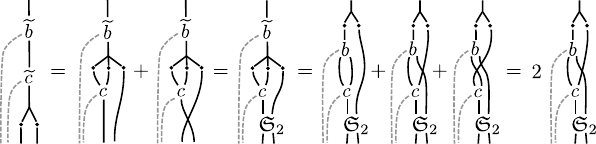}}\;\;,
        \end{equation*}
        where in the last equality we used symmetry of $\cobr$ as well as $\br \cobr = 0$ (involutivity). Now we compose the sum of these terms with $I_2$, to find 
        \begin{equation*}
            {\includegraphics[scale=1.2]{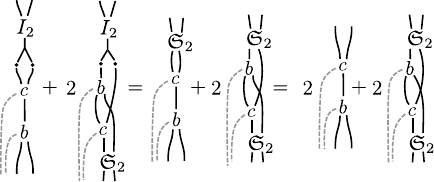}}\;\;.
        \end{equation*}
        So $\Delta^2$ vanishes on $\Sym^{2} V$ when
        \begin{equation*}
            {\includegraphics[scale=1.2]{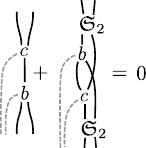}}\;\;,
        \end{equation*}
        which is the cocycle condition. Its alternative form \eqref{eq:cocycle} follows since
        \begin{equation}\label{eq:altform}
            {\includegraphics[scale=1.2]{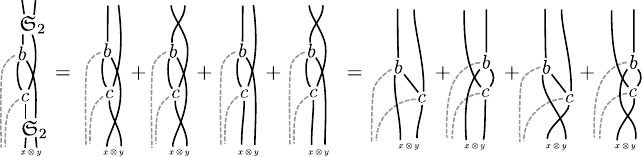}}
        \end{equation}
        is nothing but $(-1)^{x}\mathrm{ad}_x (\cobr(y)) + (-1)^{(x+1)y}\mathrm{ad}_y (\cobr(x))$. Notice that since $[\,\extbr,\extcobr\,]$ has order 2, the involutivity and cocycle conditions are equivalent to $[\,\extbr,\extcobr\,]$ vanishing on $\Sym V$.

        \item On $\Sym^{3} V$:\\
        Now assume that $\Delta^2$ vanishes on $\Sym^{\leq 2} V$. Then, as we have seen, $\extcobr\,^2 = 0$ and $[\,\extbr,\extcobr\,] = 0$. So, for any $x,y,z \in V$, $\Delta^2(xyz) = \extbr\,^2(xyz)$, which can be written diagrammatically as
        \begin{equation*}
            {\includegraphics[scale=1.2]{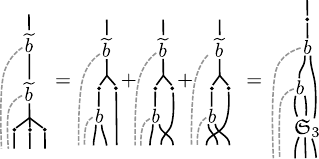}}\;\;.
        \end{equation*}
        Composing with $I_1$, as before, we find that $\Delta^2$ vanishes on $\Sym^{3} V$ precisely when $\br(\br \otimes 1)\mathfrak{S}_3 = 0$ --- Jacobi.
    \end{itemize}
    \end{proof}

    \subsection{Lie bialgebras and shifts}
    Let $\mathscr{A}$ be a $\mathbb Z_2$-graded Lie bialgebra. Then its bracket $\operatorname{br}$ and cobracket $\delta$ are even and don't satisfy the symmetry property \eqref{eqn: symmetry properties}. However, it turns out that after a suitable shift of the grading of $\mathscr{A}$, we recover the situation from the previous section. Let us now first explain the general idea behind shifting, and then formulate our definition of an involutive Lie bialgebra with possibly shifted operations, covering both the case of a usual involutive Lie bialgebra and the case $(V, \br, \cobr)$ from previous section.

    Let us denote by $S$ the 1-dimensional graded vector space $S$ concentrated in degree 1. For brevity, we write $S^{\otimes n}$ as $S^n$ and, for uniformity, we write $S$ as $S^1$.
    \begin{defn}
        The shift $\mathbb Z_2$-graded vector space $\mathscr{A}$ is defined by $\mathscr{A}[k] := S^{-k}\otimes \mathscr A$. 
    \end{defn}
    Let $f \colon \mathscr{A}_1\otimes \dots \otimes \mathscr{A}_m \to \mathscr{B}_1 \otimes \dots \otimes \mathscr{B}_n$ be a morphism of degree $|f|$. We can instead see it as a degree $0$ morphism\footnote{As an illustrative example, consider a degree $1$ element of $\mathscr{B}$: it is the same as a degree $1$ morphism $\mathbb C \to \mathscr{B}$, or a degree $0$ morphism $S \to \mathscr{B}$.} $f \colon S^{|f|}\otimes\mathscr{A}_1\otimes \dots \otimes \mathscr{A}_m \to \mathscr{B}_1 \otimes \dots \mathscr{B}_n$
    Let us now explain how we define a morphism $f^{[k]}$ between tensor products of shifted spaces $\mathscr{A}_i[k]$, $\mathscr{B}_i[k]$.
    \begin{defn}\label{defn:shiftmorph}
        Let $f \colon S^{ |f|}\otimes\mathscr{A}_1\otimes \dots \otimes \mathscr{A}_m \to \mathscr{B}_1 \otimes \dots \otimes \mathscr{B}_n$ be a degree 0 morphism. Then the shifted morphism
        \[ f^{[k]} \colon S^{ |f| - k(n-m)}\otimes\mathscr{A}_1[k]\otimes \dots \otimes \mathscr{A}_m[k] \to \mathscr{B}_1[k] \otimes \dots \otimes \mathscr{B}_n [k] \]
        is the degree $|f| -kn +km$ morphism defined by the diagram
        \[  \includegraphics[scale=1.2]{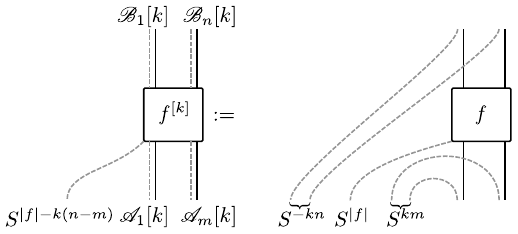}\]
        where we use the obvious isomorphism $S^{|f|-k(n-m)} \cong S^{-kn} \otimes S^{|f|} \otimes S^{km}$ (i.e. without any Koszul signs). 
    \end{defn}
    We see that a natural shift of a morphism $f$ described above contains Koszul signs, coming from the inputs and outputs crossing the $S^{\pm k}$ dashed lines.
    \medskip
    
    Let us now turn to shifting of Lie bialgebras.
    \begin{defn}\label{defn: GLBv2} A \emph{degree $p\in \mathbb Z_2$ involutive\footnote{Of course, by dropping the condition \ref{gb5}, we would obtain a definition of a degree $p$ Lie bialgebra. All Lie bialgebras we encounter in this work are involutive.} Lie bialgebra on a $\mathbb Z_2$-graded vector space $\mathscr A$} is given by linear maps $\operatorname{br}: \mathscr A \otimes \mathscr A \to \mathscr A$ (the \textit{bracket}) and $\delta:\mathscr A \to \mathscr A \otimes \mathscr A$ (the \textit{cobracket}) of degree $-p$ such that the following is satisfied for all homogeneous $x,y \in \mathscr A$:
        \begin{enumerate}
            \item \label{gb1} $\operatorname{br}(x,y) = (-1)^{x \cdot y + p + 1}\operatorname{br}(y,x)$
            \item \label{gb2} $\mathrm{im}(\delta) \subset \langle x\otimes y + (-1)^{x \cdot y + p +1} y \otimes x \rangle$
            \item \label{gb3} $\operatorname{br}(\operatorname{br} \otimes 1) \mathfrak{S}_3 = 0$
            \item \label{gb4} $\mathfrak{S}_3(\delta \otimes 1) \delta = 0$
            \item \label{gb5} $\operatorname{br}\circ \, \delta = 0$
            \item \label{gb6} $\delta(\operatorname{br}(x,y)) = (-1)^{(x + p) \cdot p} \mathrm{ad}_x (\delta(y)) + (-1)^{(x+p) \cdot y + 1}\mathrm{ad}_y (\delta(x))$
        \end{enumerate}
  In the graphical language, Items \ref{gb1} and \ref{gb2} are variants of the identities in \eqref{eqn: symmetry properties} (interpreting $\br$ and $\cobr$ as $\operatorname{br}$ and $\delta$), with the only difference being a factor of $(-1)^{p+1}$ multiplying the RHS's. Similarly, Items \ref{gb3} through \ref{gb6} are variants of the identities in \eqref{eqn: GLB conditions}, where the cocycle condition (6) is here
\begin{equation*}
    \includegraphics[scale=1.2]{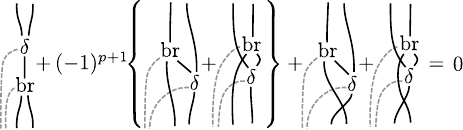}.
\end{equation*}
  
    \end{defn}
Shifting the space $\mathscr{A}$ by $p+1$ makes both $\operatorname{br}$ and $\delta$ into degree $1$ (i.e. odd) morphisms. Let us now show these combine into a BV operator.
\begin{prop}\label{prop: shiftIBL}
    Let $\mathscr A$ be a $\mathbb Z_2$-graded vector space and  let $\operatorname{br}\colon \mathscr A \otimes \mathscr A \to \mathscr A$ and $\delta:\mathscr A \to \mathscr A \otimes \mathscr A$ be degree $-p$ linear maps. Then these maps satisfy the six conditions from Definition \ref{defn: GLBv2} if and only if the shifted maps $\operatorname{br}^{[p+1]}$ and $\delta^{[p+1]}$ on $\mathscr A[p+1]$ are graded symmetric and satisfy the four equations from Proposition \ref{prop:constructing BVA}, i.e. if $\widetilde{\operatorname{br}^{[p+1]}} + \widetilde{\delta^{[p+1]}}$ defines a BV operator on $\Sym (\mathscr A[p+1])$.
\end{prop}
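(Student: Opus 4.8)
The plan is to verify that the décalage $(-)^{[p+1]}$ of Definition \ref{defn:shiftmorph} turns Items \ref{gb1}--\ref{gb6} of Definition \ref{defn: GLBv2} into, respectively, the symmetry properties \eqref{eqn: symmetry properties} and the four equations \eqref{eqn: GLB conditions} of Proposition \ref{prop:constructing BVA} applied to $V = \mathscr A[p+1]$. Since $(-)^{[k]}$ is invertible, with inverse $(-)^{[-k]}$, this gives the ``if and only if''; the final clause of the statement is then nothing but Proposition \ref{prop:constructing BVA} itself, once one knows that $\operatorname{br}^{[p+1]}$ and $\delta^{[p+1]}$ are degree-$1$ graded-symmetric maps.

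First I would isolate the formal properties of $(-)^{[k]}$ that are needed, all provable by string-diagram bookkeeping with the $S^{\pm k}$-lines: it is compatible with composition and with tensor products and satisfies $\id^{[k]} = \id$; the braiding acquires a sign, $\tau^{[k]} = (-1)^{k}\tau_{\mathscr A[k]}$, produced by the two wires crossing the two $S^{-k}$-lines; and consequently any composite of an even number of braidings is fixed, so in particular $\mathfrak S_3^{[k]} = \mathfrak S_3$ on $\mathscr A[k]^{\otimes 3}$ (as $\mathfrak S_3$ is the sum of the identity and two $3$-cycles, each a product of two transpositions). I would also record the degree count coming from Definition \ref{defn:shiftmorph}: with $k = p+1$ and $|\operatorname{br}| = |\delta| = -p$ one gets that $\operatorname{br}^{[p+1]}\colon \mathscr A[p+1]^{\otimes 2}\to\mathscr A[p+1]$ and $\delta^{[p+1]}\colon \mathscr A[p+1]\to\mathscr A[p+1]^{\otimes 2}$ both have degree $1$ mod $2$, exactly what Proposition \ref{prop:constructing BVA} demands of $\br$ and $\cobr$.

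With these preliminaries the matching becomes mechanical. Applying $(-)^{[p+1]}$ and using compatibility with $\circ$, $\otimes$ and $\id$ together with $\mathfrak S_3^{[p+1]} = \mathfrak S_3$, Item \ref{gb3} becomes $\operatorname{br}^{[p+1]}(\operatorname{br}^{[p+1]}\otimes 1)\mathfrak S_3 = 0$, Item \ref{gb4} becomes $\mathfrak S_3(\delta^{[p+1]}\otimes 1)\delta^{[p+1]} = 0$, and Item \ref{gb5} becomes $\operatorname{br}^{[p+1]}\circ\delta^{[p+1]} = 0$ --- the shifted Jacobi, co-Jacobi and involutivity identities. For Items \ref{gb1} and \ref{gb2}, which differ from \eqref{eqn: symmetry properties} only by a factor $(-1)^{p+1}$ on the right-hand sides, applying $(-)^{[p+1]}$ generates a compensating factor $(-1)^{k} = (-1)^{p+1}$ from $\tau^{[k]} = (-1)^{k}\tau_{\mathscr A[k]}$, so that $\operatorname{br}^{[p+1]}$ and $\delta^{[p+1]}$ come out genuinely graded symmetric. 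Finally, Item \ref{gb6} is, by the way Definition \ref{defn: GLBv2} is arranged, precisely the $(p+1)$-shift of the cocycle diagram in \eqref{eqn: GLB conditions}; confirming this means pushing the shift through the adjoint action, tracking that each input/output degree $|x|$ is replaced by $|x|+p+1$ and that the accumulated $S$-crossing signs collapse to the explicit coefficients $(-1)^{(x+p)\cdot p}$ and $(-1)^{(x+p)\cdot y+1}$ appearing in Item \ref{gb6} --- equivalently, that the diagram displayed there equals $(-)^{[p+1]}$ applied to \eqref{eq:altform}.

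The hard part will be precisely these two sign computations: establishing that $(-)^{[k]}$ is compatible with composition and tensor product and carries the braiding to $(-1)^k$ times the braiding, and the cocycle sign-chase for Item \ref{gb6}; everything else is formal. Once all six items are matched, $\operatorname{br}^{[p+1]}$ and $\delta^{[p+1]}$ are degree-$1$ graded-symmetric maps on $V = \mathscr A[p+1]$ satisfying the four equations of Proposition \ref{prop:constructing BVA}, so that proposition produces the BV operator $\widetilde{\operatorname{br}^{[p+1]}} + \widetilde{\delta^{[p+1]}}$ on $\Sym(\mathscr A[p+1])$; conversely, a BV operator of this form forces \eqref{eqn: symmetry properties} and \eqref{eqn: GLB conditions}, and then applying $(-)^{[-(p+1)]}$ returns Items \ref{gb1}--\ref{gb6}, completing the equivalence.
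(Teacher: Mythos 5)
Your overall strategy coincides with the paper's: shift each of the six identities of Definition \ref{defn: GLBv2} through $(-)^{[p+1]}$, use compatibility with composition, track the braiding sign $(-1)^{k}$, and observe that the factors $(-1)^{p+1}$ in Items \ref{gb1}, \ref{gb2} and \ref{gb6} are exactly absorbed. The degree count, the observation that $\mathfrak S_3$ is unaffected (two transpositions per cycle), and the treatment of Items \ref{gb1}--\ref{gb5} all match the paper's argument.

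However, there is a genuine gap in your central lemma: the shift $(-)^{[k]}$ is \emph{not} strictly compatible with tensor products. The paper's own computation \eqref{eq:shiftsfortensors} gives $(f\otimes \id)^{[k]} = (-1)^{k|f|}\,f^{[k]}\otimes\id^{[k]}$ and $(\id\otimes f)^{[k]} = (-1)^{k|f^{[k]}|}\,\id^{[k]}\otimes f^{[k]}$, and for $f = \operatorname{br}$ or $\delta$ with $k = p+1$ the second sign is $(-1)^{p+1}$, which is nontrivial. You get away with this for Items \ref{gb3} and \ref{gb4} only because the slices occurring there are of the form $f\otimes\id$, where the correction $(-1)^{k|f|} = (-1)^{(p+1)p}$ happens to be $+1$; but the cocycle identity \ref{gb6} contains slices of the form $\id\otimes\operatorname{br}$ and $\id\otimes\delta$, where the correction is $(-1)^{p+1}$ and cannot be ignored. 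The paper's ``one factor of $(-1)^{p+1}$ per crossing'' rule is the net effect of these tensor signs \emph{together with} the braiding sign; attributing all signs to braidings alone, as you propose, gives a different (and incorrect) count precisely on the terms of \eqref{eq:pictorialcocycle}. Since you explicitly defer the cocycle sign-chase to ``the hard part'' while proposing to organize it around a false compatibility statement, the one nontrivial verification in the proposition is not actually carried out, and the framework you set up for it would need to be replaced by the corrected formulas \eqref{eq:shiftsfortensors} before the argument closes.
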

Denoting the shifted bracket and cobracket by $\overline{\operatorname{br}} := \operatorname{br}^{[p+1]}$ and $\overline{\delta}:= \delta^{[p+1]}$, Definition \ref{defn:shiftmorph} gives us 
\begin{equation}\label{eqn: shift of br and delta}
        \includegraphics[scale=1.2]{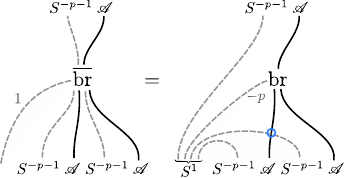},\quad
        \includegraphics[scale=1.2]{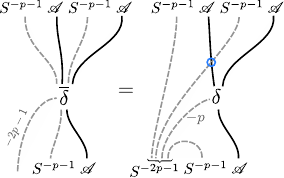}\;\;,
    \end{equation}
    where we have circled in blue the signs picked up by the shift.
Explicitly, given a homogeneous basis $x_\mu$ of $\mathscr A$, we can relate the components\footnote{The components are defined as usual, i.e. by $\operatorname{br}(x_\mu, x_\nu) = b_{\mu\nu}^\rho x_\rho$, where the summation over $\rho$ is implied.} $b_{\mu\nu}^\rho$ and $c_{\mu}^{\nu\rho}$ of the maps $\operatorname{br}$ and $\delta$ to the components $\overline{b}_{\mu\nu}^\rho$ and $\overline{c}_{\mu}^{\nu\rho}$ of their shifts $\overline{\mathrm{br}}$ and $\overline{\delta}$ by
    \begin{equation}\label{f,g transformation}
        \overline{b}_{\mu \nu}^{\rho} = (-1)^{\mu \cdot (p+1)}b_{\mu \nu}^{\rho}, \quad
        \overline{c}_{\mu}^{\nu \rho} = (-1)^{\nu \cdot (p+1)}c_{\mu}^{\nu \rho}.
    \end{equation}
\begin{proof}
    We want to understand the  shifted version of the 6 identities from Definition \ref{defn: GLBv2}. Let us thus explain how the shifting affects  composition and tensor products of morphisms. First, the shift of the flip map\footnote{I.e. the symmetry morphism $x\otimes y \to (-1)^{|x|\, |y|} y\otimes x$} $\mathscr{A}\otimes \mathscr{A} \to \mathscr{A}\otimes \mathscr{A}$ differs from the flip map $\mathscr{A}[p+1]\otimes \mathscr{A}[p+1] \to \mathscr{A}[p+1]\otimes \mathscr{A}[p+1]$. The latter differs from the former by one crossing of $S^{-p-1}$ with itself, i.e. we get an additional sign $(-1)^{p+1}$. 

    Next, it is easy to check that the shifting is compatible with composition\footnote{Composition and tensor products of morphisms of non-zero degree is defined as follows. For $f\colon S^{|f|}\otimes \mathscr{A}_n \to \mathscr{A}_o$ and $g\colon S^{|g|}\otimes \mathscr{A}_m\to \mathscr{A}_n$, we set set $f\circ g$ as $S^{|f|} \otimes S^{|g|} \otimes \mathscr{A}_m\xrightarrow{\id \otimes g} S^{|f|}  \otimes \mathscr{A}_n \xrightarrow{f}\mathscr{A}_o$. For $f' \colon S^{|f'|}\otimes \mathscr{A}_{n'}\to \mathscr{A}_{o'}$, we set $f\otimes f'$ to be
    \[ S^{|f|} \otimes  S^{|f'|}\otimes \mathscr{A}_{n}\otimes \mathscr{A}_{n'} \xrightarrow{\text{flip}}  S^{|f|}\otimes \mathscr{A}_{n} \otimes  S^{|f'|}\otimes \mathscr{A}_{n'} \xrightarrow{f \otimes f'} 
     \mathscr{A}_{o} \otimes \mathscr{A}_{o'}.\]}, i.e. for general shifting by $k$
    \[ (f\circ g)^{[k]} = f^{[k]}\circ g^{[k]}.\]
    However, for tensoring morphisms, we get a sign
    \begin{equation}\label{eq:shiftsfortensors} (f\otimes \id)^{[k]} = (-1)^{k |f|} f^{[k]} \otimes \id^{[k]}, \quad \quad (\id \otimes f)^{[k]} = (-1)^{k |f^{[k]}|}\id^{[k]} \otimes f^{[k]}. \end{equation}
    If $f$ has degree $0$ and the same number of inputs and outputs (e.g. a flip), both of the signs above vanish. In our case, for $f=\operatorname{br} \text{ or } \delta$ (which both have degree $p$) and $k=p+1$, we get
     \[ (f\otimes \id)^{[k]} =  f^{[k]} \otimes \id^{[k]}, \quad \quad (\id \otimes f)^{[k]} = (-1)^{p+1} \mathrm{id}^{[k]} \otimes f^{[k]}. \]
     In other words, we can summarize the rules as: 
     \begin{center}
         \emph{When replacing $\operatorname{br}$ and $\delta$ with $\overline{\operatorname{br}}$ and $\overline{\delta}$ in the diagrams for the 6 equations in Definition \ref{defn: GLBv2}, each term acquires a sign $(-1)^{p+1}$ for each crossing.}
     \end{center}
     Equations in Items \ref{gb1} and \ref{gb2} each have a term with one crossing; the sign cancels with $(-1)^{p+1}$ and we get the usual graded symmetry of the shifted bracket and cobracket. 
     In Items \ref{gb3} and \ref{gb4}, there are either $0$ or $2$ crossings in each term of the cyclic symmetriser, and thus there is no picked up sign and the equations stays the same. In Item \ref{gb5}, there are no crossings. Finally, writing Item \ref{gb6} as
    \begin{equation} \label{eq:pictorialcocycle}
        {\includegraphics[scale=1.2]{Diagrams/024.pdf}}\;\;,
    \end{equation}
    we see that the terms in the curly bracket have an odd number of crossings, which cancels the sign in front. The other terms have an even numbers of crossing, and thus remain unchanged, and we recover the last equality from Proposition \ref{prop:constructing BVA} in the form \eqref{eq:altform}.
\end{proof}

\section{Graded Necklace Lie Bialgebras}\label{section: GNLBs}

    We now introduce a graded version of Schedler's necklace Lie bialgebra \cite{Schedler}.
    Fix a quiver $Q$. As usual, $Q_0$ will denote its set of vertices and $Q_1$ its set of arrows. The source and terminal vertices of an arrow $a \in Q_1$ are written as $s(a)$ and $t(a)$, respectively. Let $\overline{Q}$ be the double of $Q$. That is, $\overline{Q}_0 := Q_0$ and $\overline{Q}_1 := \bigcup_{a \in Q_1}\{a,\overline{a}\}$, where $\overline{a}$ is called the double arrow of $a$ and satisfies $t(\overline{a}) = s(a), s(\overline{a}) = t(a)$. By convention, $\overline{\overline{a}}$ is defined to be $a$. 

    We denote the path algebra of $\overline{Q}$ by $\mathcal{A}$. Recall that this is the vector space with basis the set of all paths in $\overline{Q}$ (including the ``constant path'' $e_v$ for each vertex $v \in \overline{Q}_0 = Q_0$), together with the usual multiplication as concatenation of paths. When we say ``let $a_1 \cdots a_n$ be a path'', it is understood that each $a_i$ is an \textit{arrow} (equivalently, a path of length 1), and that the path is read left to right; so that $t(a_i) = s(a_{i+1})$, etc.
    
    Fixing $p \in \mathbb{Z}_2$, we define a $\mathbb{Z}_2$-grading on the path algebra $\mathcal{A}$ based on the numbers of new arrows in paths (which are homogeneous elements of $\mathcal{A}$). Specifically we define the degree of constant paths to be $0$, while a nonconstant path $a_1 \cdots a_n$ (with $n \geq 1$) is defined to have degree
    $
    |a_1 \cdots a_n| := p \cdot \#\{i \; \vert \; a_i \notin Q_1\}.
    $
    
    Equivalently, this grading can be induced by declaring all arrows in $Q$ to have degree $0$ and their doubles to have degree $p$, and then demanding the product in $\mathcal{A}$ to respect these degrees.
    
    We now consider the quotient of $\mathcal{A}$ by its graded-commutator subspace, $[\mathcal{A},\mathcal{A}]$:
    $$
    \mathscr{A} := \mathcal{A} / [\mathcal{A},\mathcal{A}].
    $$
    We can think of $\mathscr{A}$ as the vector space generated by paths in $\overline{Q}$ subject to relations of the form 
    \begin{equation} \label{eq:cyclicpropexplicit}a_1 \cdots a_n = (-1)^{a_1 \cdot (a_2 + \dots +a_n)} a_2 \cdots a_n a_1 = \dots = (-1)^{(a_1 + \dots + a_{n-1}) \cdot a_n} a_n a_1 \cdots a_{n-1},\end{equation}
    where here $a_1\cdots a_n$ is an arbitrary nonconstant path.
    It is then clear that $\mathscr{A}$ is spanned by \textit{closed} paths in $\overline{Q}$ (this includes the constant paths), and that a closed path in $\overline{Q}$ is either 0 in $\mathscr{A}$ (as a consequence of the relations) or uniquely determines a 1 dimensional subspace of $\mathscr{A}$ which does not depend on the cyclic path's endpoints. As a result, we introduce notation to treat different choices of endpoints on an equal footing:

    For a non-constant path $a_1\cdots a_n$ in $\overline{Q}$ (which we label by $A$ when forgetting endpoints entirely) we write $A_k := a_k \cdots a_{k-1}$, so that the original path is $A_1$. In the quotient $\mathscr{A}$ we have $A_k = (-1)^{\varepsilon^A_{kl}}A_l$, where we have introduced the notation $\varepsilon_{kl}^A$ for the relative sign, which is explicitly $|a_k \cdots a_{l-1}||a_l \cdots a_{k-1}|$. Notice that we necessarily have
    \begin{equation}\label{eqn:epsilon_identity}
        \varepsilon_{kl}^A = \varepsilon_{km}^A + \varepsilon_{ml}^A.
    \end{equation}
    Furthermore, we define $A_{ij} \in \mathcal A$ by
    $$
    A_{ij} := 
    \begin{cases}
        e_{t(a_i)}, \; \text{if $j = i+1 \mod n$};\\
        a_{i+1} \cdots a_{j-1}, \; \text{otherwise.}
    \end{cases}
    $$
    The above notation can be summarised as follows:
    \begin{equation*}
        {\includegraphics[scale=1.1]{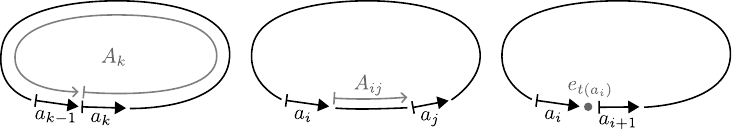}}.
    \end{equation*}
    
    As in Schedler's case, we define an ``indicator'' $\langle-,-\rangle\colon \overline{Q}_1 \times \overline{Q}_1 \to \mathbb{C}$ as follows: 
    \begin{equation*}
        \langle a, b \rangle :=
        \begin{cases}
            0, \; \text{if $b \neq \overline{a}$};\\
            1, \; \text{if $a \in Q_1$ and $b = \overline{a}$};\\
            (-1)^{p+1}, \; \text{if $a \notin Q_1$ and $b = \overline{a}$}.
        \end{cases}
    \end{equation*}
    This indicator\footnote{The rule $\langle a, b \rangle = (-1)^{p+1}\langle b, a \rangle$ can be motivated as follows: the indicator corresponds to a symplectic form $\sum_a \mathrm{d} a\, \mathrm{d} \overline{a}$, where both differentials are odd iff $p$ is even.} is a degree $-p$ map, and using it we define degree $-p$ linear maps
    \begin{equation*}
        \operatorname{br}\colon \mathcal{A} \otimes \mathcal{A} \to \mathscr{A}, \quad
        \delta\colon \mathcal{A} \to \mathscr{A} \otimes \mathscr{A}
    \end{equation*}
    on the basis of paths as follows:
    \begin{itemize}
        \item $\operatorname{br}(x,y)$ is defined to be zero if $x$ or $y$ has length $0$ (i.e. is a constant path). Otherwise, for non-constant paths $A$ and $B$, we define
        \begin{equation}\label{eqn: Bracket defn}
            \operatorname{br}(A_k,B_l)
            =
            \sum_{i}^{A}\sum_{j}^{B}(-1)^{\varepsilon_{ki}^{A} + \varepsilon_{lj}^B + A_{ii} \cdot b_j}\langle a_i, b_j \rangle A_{ii}B_{jj},
        \end{equation}
        where we have stopped being so explicit as to whether we are viewing paths as elements of $\mathcal{A}$ or $\mathscr{A}$ --- this will be clear from context. We can write the $ij$'th term of \eqref{eqn: Bracket defn} diagrammatically as
        \begin{equation*}
            {\includegraphics[scale=1.2]{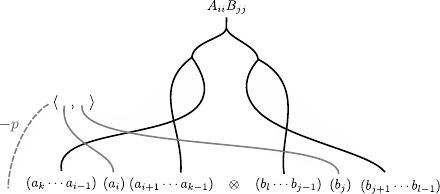}}\;\;.
        \end{equation*}
        Due to \eqref{eqn:epsilon_identity}, we see that $\operatorname{br}$ descends to a map $\mathscr{A} \otimes \mathscr{A} \to \mathscr{A}$ which we denote by $\operatorname{br}$ also.

        \item $\delta(x)$ is defined to be zero if $x$ has length $\leq 1$. Otherwise, for a path $A$ of length $\geq 2$ we define
        \begin{equation}\label{eqn: delta usual-notation expression}
            \delta(A_k) 
            =
            \frac{1}{2}\sum_{i,j}^{A} (-1)^{\varepsilon_{ki}^A + A_{ij} \cdot a_j} \langle a_i, a_j \rangle A_{ij} \otimes A_{ji},
        \end{equation}
        and the $ij$'th term is written diagrammatically as
        \begin{equation*}
            {\includegraphics[scale=1.2]{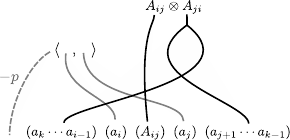}}\;\;,
        \end{equation*}
        up to a factor of $\frac{1}{2}$. Again, $\delta$ descends to a map $\mathscr{A} \to \mathscr{A} \otimes \mathscr{A}$, which we denote by $\delta$ also.
    \end{itemize}

    \begin{prop}\label{prop:GSNLB}
        \textit{The tuple $(\mathscr{A},\operatorname{br},\delta)$, which we will call the \emph{graded Schedler necklace Lie bialgebra}, is a degree $p$ involutive Lie bialgebra.}
    \end{prop}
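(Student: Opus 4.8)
The plan is to verify the six axioms of Definition \ref{defn: GLBv2} for $(\mathscr A, \operatorname{br}, \delta)$ with the stated degree $p$. Items \ref{gb1} and \ref{gb2} — the (graded) (anti)symmetry of the bracket and cobracket — should follow more or less directly from the combinatorial definitions \eqref{eqn: Bracket defn} and \eqref{eqn: delta usual-notation expression}: the bracket sum over pairs $(i,j)$ is manifestly symmetric under swapping the roles of $A$ and $B$ up to the indicator rule $\langle a,b\rangle = (-1)^{p+1}\langle b, a\rangle$ noted in the footnote, and $\delta$ carries a built-in factor $\tfrac12$ precisely to symmetrize the $(i,j)\leftrightarrow(j,i)$ terms, again picking up the same sign. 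The bookkeeping here is tracking the $\varepsilon^A_{kl}$ signs and the Koszul signs $A_{ii}\cdot b_j$, $A_{ij}\cdot a_j$, which is routine using the additivity \eqref{eqn:epsilon_identity}.

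The heart of the proof is the diagrammatic/graphical argument, which the paper defers to Appendix \ref{Appendix:Cals}; I would invoke that appendix for Items \ref{gb3}–\ref{gb6}. The strategy is the one familiar from Schedler's original paper and its surface analogues: interpret a term of $\operatorname{br}(\operatorname{br}\otimes 1)\mathfrak S_3$ (resp. $\mathfrak S_3(\delta\otimes 1)\delta$, $\operatorname{br}\circ\delta$) as choosing two (resp. two) pairs of arrows-in-involution among the three necklaces and gluing, then show the resulting terms cancel in pairs after the cyclic symmetrization. For the Jacobi identity one organizes the six terms coming from $\mathfrak S_3$ into three cancelling pairs corresponding to the two possible orders of performing two gluings at a fixed pair of involution-sites; the crucial point is that the Koszul/$\varepsilon$ signs are exactly such that the two orders differ by an overall minus. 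Co-Jacobi is dual. Involutivity $\operatorname{br}\circ\delta = 0$ follows because cutting a necklace at two involution-sites and then re-gluing those very same two arrows produces two terms (from the two sites) which are equal up to sign and cancel — this is where the specific value $(-1)^{p+1}$ of the self-indicator-pairing is needed. The cocycle condition is the most involved: one expands $\delta(\operatorname{br}(A,B))$ as a double sum — first a gluing of $A,B$ at a pair $(a_i,b_j)$, then a cut of the glued necklace at a pair of involution-sites — and separates the cut-sites into those lying entirely within (the image of) $A$, those within $B$, and those straddling the glued region; the straddling terms reassemble into $\operatorname{ad}_A(\delta B) + \operatorname{ad}_B(\delta A)$ while the within-$A$ and within-$B$ terms must be shown to cancel against each other, again by a sign-matched pairing.

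The main obstacle I expect is precisely the sign discipline throughout the $p$-dependent setting: unlike Schedler's $p=0$ case, here the doubled arrows carry odd degree when $p=1$, so every crossing of strands in the diagrams contributes a Koszul sign, and one must check that the combinatorial cancellations survive uniformly in $p\in\mathbb Z_2$. Concretely, the danger points are (i) the self-pairing $\langle \overline a, a\rangle = (-1)^{p+1}$ versus $\langle a, \overline a\rangle = 1$, which must conspire correctly in the involutivity and cocycle cancellations, and (ii) the rewriting of "$\operatorname{ad}_x$ acting on a tensor as $\operatorname{ad}_x\otimes 1 + 1\otimes\operatorname{ad}_x$" which, for $\operatorname{ad}_x$ of degree $-p$, picks up a sign on the second summand exactly matching the $(-1)^{p+1}$ in \eqref{eq:pictorialcocycle}. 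I would therefore structure the verification so that the $p=0$ combinatorics is isolated first (reproving Schedler's result in the graphical calculus) and then observe that introducing the grading multiplies each diagram of a given shape by the same power of $(-1)$ on both sides of each identity, so the equalities are preserved; the detailed diagram-by-diagram check is carried out in Appendix \ref{Appendix:Cals}.
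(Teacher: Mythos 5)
Your proposal takes essentially the same route as the paper: the paper's proof likewise reduces to checking the six axioms of Definition \ref{defn: GLBv2}, carries out the full $p=0$ computation in Appendix \ref{Appendix:Cals} by pairing off terms under re-indexing permutations exactly as you describe, and dismisses general $p$ as ``identical, albeit with more sign bookkeeping.'' One small slip in your sketch of the cocycle condition: in the paper's decomposition it is the terms whose two cut-sites lie entirely within $A$ (resp.\ within $B$) that produce $-\mathrm{ad}_B(\delta(A))$ (resp.\ $\mathrm{ad}_A(\delta(B))$), while the two straddling terms cancel against each other --- you have these two roles interchanged, though the four-way decomposition itself is the right one.
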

    \begin{proof}
        As $\operatorname{br}$ and $\delta$ have degree $-p$, we must check that conditions $(1)$--$(6)$ of Definition \ref{defn: GLBv2} are satisfied. This is simply a tedious calculation. Introducing some new notation to help us, we prove the $p=0$ case (supplying details to Schedler's calculation) in Appendix \ref{Appendix:Cals} --- the method for general $p$ is identical, albeit with more sign bookkeeping. 
    \end{proof}
    As explained in Propositions \ref{prop:constructing BVA} and \ref{prop: shiftIBL}, we can pass to the shifted space $\mathscr{A}[p+1] = S^{-p-1} \otimes \mathscr{A}$ to get a BV operator on $\Sym (\mathscr{A}[p+1])$. 
    \begin{defn}\label{def:necklaceBV}
        Choose $\hbar \in \mathbb C$.
        Let $\mathscr{B} := \Sym (\mathscr{A}[p+1])$ be the Batalin-Vilkovisky algebra with BV operator $\Delta_\hbar := \widetilde{\operatorname{br}} + \hbar \widetilde{{\delta}}$. Here, both terms are given as extensions to differential operators of the shifted bracket $\overline{\operatorname{br}}$ and shifted cobracket $\overline{\delta}$ --- see \eqref{eqn: shift of br and delta}.
    
    \end{defn}
    Note that, when shifting $\operatorname{br}$ and $\delta$, the Koszul signs in \eqref{eqn: shift of br and delta} are always $+1$ since the degree of any homogeneous element $x \in \mathscr{A}$ is a multiple of $p$, and hence $(-1)^{x \cdot (-p-1)} = 1$.
    In the usual case $p=0$, we get a BV operator extending the differential on the (dual) Chevalley-Eilenberg complex $\Sym (\mathscr{A}[1])\cong \bigwedge\mathscr{A}$.


    
\section{BV Algebra on a Representation Variety}\label{section: BV Algebra on a Representation Variety}

In this section we construct another BV algebra, $(\widetilde{\mathcal{O}},\widetilde{\Delta})$, which we compare with the BV algebra $(\mathscr{B},\Delta)$ constructed from a quiver $Q$ and parameter $p$. The input data for the present construction is a choice, for each vertex $i$ of $Q$, of a $\mathbb{Z}_2$-graded vector space $V_i$ \emph{with a degree $-p-1$ invertible linear map $\iota^i:V_i \to V_i$} such that $(\iota^i)^2$ is equal to $\lambda \mathrm{id}_{V_i}$ for some constant $\lambda \in \mathbb C$ independent of the vertex $i$. We can then consider the following variant (Equation \eqref{eq:iotaRV}) of the quiver representation variety: the vector space of linear maps which intertwine the maps $\iota$. This vector space will have a non-degenerate, degree $-1$ pairing given in Equation \eqref{eqn: pairing}. The BV algebra  $(\widetilde{\mathcal{O}},\widetilde{\Delta})$ is the algebra of polynomial functions on this representation variety, and the BV operator is the constant-coefficients, second-order differential operator given by the inverse of the pairing.

\subsection{Coordinate-free construction}
To begin, let $V = \{V_i\}_{i \in \overline{Q}_0}$ be a collection of finite dimensional $\mathbb{Z}_2$-graded vector spaces --- one at each vertex of $\overline{Q}$. For each arrow $a \in \overline{Q}_1$ we get a $\mathbb{Z}_2$-graded vector space $\mathrm{H}^a := \underline{\operatorname{Hom}}(V_{s(a)},V_{t(a)})$ of the set of \textit{all} linear mappings $V_{s(a)} \to V_{t(a)}$ (not just those that are even). We then consider the following shifted version of the representation variety
\begin{equation*}
    \mathcal{R}(\overline{Q},{V}) := {\bigoplus}_{a \in \overline{Q}_1}\mathrm{H}^a \otimes S^{-a},
\end{equation*}
where, as before, $S^{-a}$ denotes the homogeneous 1-dimensional vector space of degree $-|a|$ (so degree $-p$ if $a \notin Q_1$, and $0$ otherwise). With this we define the (graded) symmetric algebra
\begin{equation*}
    \mathcal{O} := \mathcal{O}(\mathcal{R}(\overline{Q},{V})) = \Sym(\mathcal{R}(\overline{Q},{V})^*).
\end{equation*}

Now, for each $i \in \overline{Q}_0$, let $\iota^i \in \underline{\operatorname{End}}(V_i)$ be a degree $-p-1$ invertible map whose square is equal to $\lambda \mathrm{id_{V_i}}$ ($\lambda$ is indepent of the vertex $i$). Then for each $a \in \overline{Q}_1$ we have a subspace $\mathrm{H}_{\iota}^a \subset \mathrm{H}^a$ of maps that (graded) commute with $\iota$. That is, if $f \in \mathrm{H}^a$ is homogeneous, then
\begin{equation*}
    f \in \mathrm{H}_{\iota}^a \iff [\iota,f] = \iota \circ f - (-1)^{\iota \cdot f} f \circ \iota = \iota^{t(a)} \circ f - (-1)^{\iota \cdot f} f \circ \iota^{s(a)} = 0.
\end{equation*}
Here we have seen an example of us writing $\iota$ instead of the more correct $\iota^i$ for some vertex $i$. We frequently do this to ease notation when the vertex is clear from context. 
The subspace of all maps intertwining the $\iota$'s in this way will be the representation variety of interest for us.
\begin{defn} Let 
\begin{equation}\label{eq:iotaRV}
    \widetilde{\mathcal{R}}(\overline{Q},{V},{\iota}) := {\bigoplus}_{a \in \overline{Q}_1}\mathrm{H}^a_{\iota} \otimes S^{-a},
\end{equation}
and denote its algebra of formal polynomial fuctions as
\begin{equation*}
    \widetilde{\mathcal{O}} := \mathcal{O}(\widetilde{\mathcal{R}}(\overline{Q},{V},{\iota})) = \Sym(\widetilde{\mathcal{R}}(\overline{Q},{V},{\iota})^*).
\end{equation*}
\end{defn}
\begin{rem}\label{rem:subandquot}
The algebras are related by $\widetilde{\mathcal{O}} \cong \mathcal{O}/I$, where $I$ is the ideal generated by the annihilator $\mathrm{Ann}_{\widetilde{\mathcal{R}}(\overline{Q},{V},{\iota})} \subset \mathcal{R}(\overline{Q},{V})^*$.
\end{rem}
We now aim to define a BV operator on $\widetilde{\mathcal{O}}$. To this end, for each $a \in Q_1$ we define a degree 0 pairing
\begin{equation}\label{eqn: pairing}
    S^{-1} \otimes \mathrm{H}_{\iota}^a \otimes \mathrm{H}_{\iota}^{\overline{a}} \otimes S^{-p} \to \mathbb{C};\quad s^{-1} \otimes \phi \otimes \psi \otimes s^{-p} \mapsto \operatorname{str}(\iota \circ \phi \circ \psi),
\end{equation}
where $s^k$ denotes unity in $S^k$. In Proposition \ref{prop: non-degeneracy} we prove that this pairing is non-degenerate. For this it helps to define a linear map $B_{\iota}\colon\mathrm{H}^{a} \to \mathrm{H}^{a}$  by $B_{\iota}(f) = \iota \circ f + (-1)^{\iota \cdot f + \iota} f \circ \iota$, for homogeneous\footnote{Note the strange sign. The map $B_\iota$ has surprising algebraic properties and appears again in Lemma \ref{lem: intertwining 1}.} $f$. In fact, we define such a map for each arrow $a \in \overline{Q}_1$, and denote them all by $B_{\iota}$.
\begin{lem}\label{lem: B_iota image}
    \textit{The map $B_{\iota}\colon\mathrm{H}^{a} \to \mathrm{H}^{a}$ satisfies $B_{\iota}(\mathrm{H}^a) = \mathrm{H}_{\iota}^{a}$.}
\end{lem}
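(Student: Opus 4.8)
The plan is to prove the two inclusions $B_{\iota}(\mathrm{H}^a)\subseteq \mathrm{H}^a_\iota$ and $\mathrm{H}^a_\iota\subseteq B_{\iota}(\mathrm{H}^a)$ separately, each by a short direct computation whose only input is the relation $(\iota^i)^2=\lambda\,\mathrm{id}_{V_i}$ at the two endpoints of $a$, together with $\lambda\neq 0$. The scalar $\lambda$ is nonzero in every nontrivial case: if $\mathrm{H}^a=0$ the statement is vacuous, and otherwise $V_{s(a)}\neq 0$, so invertibility of $\iota^{s(a)}$ forces $\lambda\,\mathrm{id}_{V_{s(a)}}=(\iota^{s(a)})^2$ to be invertible, hence $\lambda\neq 0$. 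Throughout one may assume all elements homogeneous by linearity.

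For the inclusion $B_{\iota}(\mathrm{H}^a)\subseteq \mathrm{H}^a_\iota$, I would fix homogeneous $f\in\mathrm{H}^a$ of degree $\phi$, record that $B_{\iota}(f)=\iota^{t(a)}\circ f+(-1)^{(p+1)(\phi+1)}\,f\circ\iota^{s(a)}$ has degree $\phi+p+1$, and compute $\iota^{t(a)}\circ B_{\iota}(f)$ and $B_{\iota}(f)\circ\iota^{s(a)}$ directly. Using $\iota^{t(a)}\circ\iota^{t(a)}=\lambda\,\mathrm{id}$ and $\iota^{s(a)}\circ\iota^{s(a)}=\lambda\,\mathrm{id}$, each of these equals $\lambda f$ plus a sign times $\iota^{t(a)}\circ f\circ\iota^{s(a)}$; comparing the two, with the Koszul sign $(-1)^{\iota\cdot B_{\iota}(f)}$ dictated by the degree of $B_{\iota}(f)$, one finds $[\iota,B_{\iota}(f)]=0$, i.e.\ $B_{\iota}(f)\in\mathrm{H}^a_\iota$. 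The role of the ``strange sign'' $(-1)^{\iota\cdot f+\iota}$ in the definition of $B_{\iota}$ is precisely to make these two terms cancel.

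For the reverse inclusion it suffices to exhibit, for each homogeneous $g\in\mathrm{H}^a_\iota$, an explicit preimage; I claim $g=B_{\iota}\!\left(\tfrac{1}{2\lambda}\,\iota^{t(a)}\circ g\right)$. Indeed, substituting $f=\iota^{t(a)}\circ g$ into the definition of $B_{\iota}$, using $\iota^{t(a)}\circ\iota^{t(a)}=\lambda\,\mathrm{id}$ to simplify the first term and the defining intertwining relation $g\circ\iota^{s(a)}=(-1)^{(p+1)|g|}\,\iota^{t(a)}\circ g$ (valid since $g\in\mathrm{H}^a_\iota$) to simplify the second, both terms collapse to $\lambda g$, so $B_{\iota}(\iota^{t(a)}\circ g)=2\lambda g$. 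Dividing by $2\lambda\neq 0$ gives $g\in B_{\iota}(\mathrm{H}^a)$, completing the proof.

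I do not anticipate a genuine obstacle; the lemma is essentially sign bookkeeping. The only subtlety worth isolating is conceptual: unlike $\mathrm{ad}_\iota=[\iota,-]$, whose kernel is $\mathrm{H}^a_\iota$, the operator $B_{\iota}$ has \emph{image} equal to $\mathrm{H}^a_\iota$ (concretely, $B_{\iota}=\mathrm{ad}_\iota$ when $p$ is even and $B_{\iota}$ is the graded anticommutator $\{\iota,-\}$ when $p$ is odd), and it is exactly this asymmetric sign that both lands $B_{\iota}$ inside $\ker\mathrm{ad}_\iota$ and makes the explicit preimage above work; it is worth spelling this out where the ``surprising algebraic properties'' of $B_\iota$ are mentioned.
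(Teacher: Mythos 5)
Your proof is correct and follows essentially the same route as the paper: the inclusion $B_{\iota}(\mathrm{H}^a)\subseteq\mathrm{H}^a_{\iota}$ via $[\iota,B_{\iota}(f)]=[\iota^2,f]=0$ with $\iota^2=\lambda\,\mathrm{id}$ central, and the reverse inclusion via the explicit preimage $\tfrac{1}{2\lambda}\,\iota^{t(a)}\circ g$, which coincides with the paper's $\tfrac12\,\iota^{-1}\circ\psi$ since $\iota^{-1}=\lambda^{-1}\iota$. Your added observations --- that $\lambda\neq 0$ is forced by invertibility of $\iota$, and that $B_{\iota}$ is $\mathrm{ad}_{\iota}$ for $p$ even and the anticommutator for $p$ odd --- are correct but not needed for the argument.
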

\begin{proof}
    Let $f \in \mathrm{H}^{a}$.
    A quick calculation shows that $[\iota,B_{\iota}(f)] = [\iota^2,f] = 0$, where the last equality follows since $\iota^2$ is central. Thus $B_{\iota}(\mathrm{H}^a) \subset \mathrm{H}^{a}_{\iota}$.
    On the other hand, if $\psi \in \mathrm{H}_{\iota}^{a}$, then $\psi = B_{\iota} (\frac{1}{2} \iota^{-1} \circ \psi)$, so that $B_{\iota}(\mathrm{H}^{a}) \supset \mathrm{H}_{\iota}^{a}$.
\end{proof}
\begin{prop}\label{prop: non-degeneracy}
    \textit{The pairing \eqref{eqn: pairing} is non-degenerate.}
\end{prop}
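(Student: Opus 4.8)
The plan is to show that the induced map from $\mathrm{H}_\iota^a$ to the dual of $\mathrm{H}_\iota^{\overline a}$ is injective; since both spaces are finite-dimensional and the pairing is (up to the one-dimensional shifts $S^{-1}, S^{-p}$, which are invertible) symmetric between them, injectivity on one side gives non-degeneracy. Concretely, suppose $\phi \in \mathrm{H}_\iota^a$ is homogeneous and satisfies $\operatorname{str}(\iota \circ \phi \circ \psi) = 0$ for all $\psi \in \mathrm{H}_\iota^{\overline a}$; I want to conclude $\phi = 0$. The key structural input is Lemma \ref{lem: B_iota image}, which tells us that $\mathrm{H}_\iota^{\overline a} = B_\iota(\mathrm{H}^{\overline a})$, so it suffices to test against elements of the form $\psi = B_\iota(g)$ for arbitrary $g \in \mathrm{H}^{\overline a} = \underline{\operatorname{Hom}}(V_{t(a)}, V_{s(a)})$.

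The main step is then the following computation: I would expand $\operatorname{str}(\iota^{t(a)} \circ \phi \circ B_\iota(g))$ using the definition $B_\iota(g) = \iota^{s(a)} \circ g + (-1)^{\iota\cdot g + \iota} g \circ \iota^{t(a)}$, and use that $\phi$ intertwines $\iota$ (i.e. $[\iota,\phi]=0$, from $\phi \in \mathrm{H}_\iota^a$) together with the cyclicity of the supertrace and $(\iota^i)^2 = \lambda\,\mathrm{id}$ to collect the two resulting terms into a single multiple of $\operatorname{str}(\iota^{t(a)}\circ\iota^{t(a)}\circ\phi\circ g) = \lambda\,\operatorname{str}(\phi\circ g)$ — up to a sign and a factor $2$, exactly mirroring the identity $\psi = B_\iota(\tfrac12\iota^{-1}\circ\psi)$ used in the lemma. (Since $\lambda$ could in principle be zero I would instead argue directly with $\iota^{-1}$: because $\iota$ is invertible, testing against all $\psi = B_\iota(g)$ is the same as testing $\operatorname{str}(\iota\circ\phi\circ(\iota\circ g \pm g\circ\iota))$, and after moving one $\iota$ across $\phi$ this becomes a nonzero multiple of $\operatorname{str}(\iota^2\circ\phi\circ g)$, i.e. a nonzero multiple of $\operatorname{str}((\iota\circ\phi)\circ (\iota\circ g))$.) The upshot is that the original vanishing condition forces $\operatorname{str}\big((\iota^{t(a)}\circ\phi) \circ h\big) = 0$ for \emph{all} $h \in \underline{\operatorname{Hom}}(V_{t(a)},V_{s(a)})$, where $h$ ranges over $\iota^{s(a)}\circ g$ (hence over everything, $\iota$ being invertible).

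To finish, I invoke non-degeneracy of the supertrace pairing on the full Hom-spaces: the pairing $\underline{\operatorname{Hom}}(V_{s(a)},V_{t(a)}) \otimes \underline{\operatorname{Hom}}(V_{t(a)},V_{s(a)}) \to \mathbb{C}$, $(u,h)\mapsto \operatorname{str}(u\circ h)$, is non-degenerate on $\mathbb{Z}_2$-graded vector spaces (it is the standard trace pairing, block-diagonal in the parity decomposition with the even–even and odd–odd blocks paired by $\operatorname{tr}$ and the even–odd/odd–even blocks self-paired with a sign). Hence $\iota^{t(a)}\circ\phi = 0$, and since $\iota^{t(a)}$ is invertible, $\phi = 0$. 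This establishes injectivity of $\mathrm{H}_\iota^a \to (\mathrm{H}_\iota^{\overline a})^*$, and by the same argument with $a$ and $\overline a$ exchanged (recall $\overline{\overline a}=a$) the pairing is non-degenerate.

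The step I expect to be the main obstacle is the sign bookkeeping in collapsing $\operatorname{str}(\iota\circ\phi\circ B_\iota(g))$ to a multiple of $\operatorname{str}(\iota^2\circ\phi\circ g)$: one must carefully track the Koszul signs when commuting the odd-degree $\iota$ past $\phi$ and $g$ and when applying graded cyclicity of $\operatorname{str}$, and verify that the two terms of $B_\iota$ add rather than cancel — this is precisely where the "strange sign" in the definition of $B_\iota$ (noted in the footnote) is doing its job, so the computation should be arranged to make that cancellation-avoidance manifest rather than accidental.
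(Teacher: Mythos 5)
Your proposal is correct and follows essentially the same route as the paper's proof: both reduce, via Lemma \ref{lem: B_iota image}, to testing against $B_\iota(g)$ for arbitrary $g\in\mathrm{H}^{\overline a}$, use $[\iota,\phi]=0$ together with cyclicity of $\operatorname{str}$ to collapse the two terms of $B_\iota$ into a nonzero multiple of $\operatorname{str}(\iota^2\circ\phi\circ g)$, and conclude from non-degeneracy of the ordinary supertrace pairing, treating the second non-degeneracy condition symmetrically. (Your worry about $\lambda=0$ is moot: $\iota$ is invertible and $\iota^2=\lambda\,\mathrm{id}$, so $\lambda\neq 0$ automatically.)
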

\begin{proof}
    We need to show that 
    \begin{equation}\label{eqn: non-deg condition 1}
        \phi \in \mathrm{H}^{a}_{\iota} \;\; \text{and} \; \; \operatorname{str}(\iota \circ \phi \circ \psi)=0 \; \text{for all} \; \psi \in \mathrm{H}^{\overline{a}}_{\iota}
        \implies
        \phi = 0,
    \end{equation}
    and that 
    \begin{equation}\label{eqn: non-deg condition 2}
        \psi \in \mathrm{H}^{\overline{a}}_{\iota} \;\; \text{and} \; \; \operatorname{str}(\iota \circ \phi \circ \psi)=0 \; \text{for all} \; \phi \in \mathrm{H}^{a}_{\iota}
        \implies
        \psi = 0.
    \end{equation}
    Assume the premise of \eqref{eqn: non-deg condition 1}. By Lemma \ref{lem: B_iota image}, we have that $\operatorname{str}(\iota \circ \phi \circ B_{\iota}(f)) = 0$ for all $f \in \mathrm{H}^{\overline{a}}$. By the fact that $\phi$ commutes with $\iota$ and by cyclicity of $\operatorname{str}$, we find that $\operatorname{str}(\iota \circ \phi \circ B_{\iota}(f)) = 2(-1)^{\iota \cdot \phi} \operatorname{str}(\iota^2 \circ \phi \circ f)$, and hence that $\operatorname{str}(\iota^2 \circ \phi \circ f) = 0$ for all $f \in \mathrm{H}^{\overline{a}}$. As the regular supertrace pairing is non-degenerate, it follows that $\iota^2 \circ \phi = 0$, and thus $\phi = 0$. This proves \eqref{eqn: non-deg condition 1}. \eqref{eqn: non-deg condition 2} is similar.
\end{proof}
Since the pairing is non-degenerate, there exists a unique linear map $t^a\colon \mathbb{C} \to \mathrm{H}_{\iota}^{\overline{a}} \otimes S^{-p} \otimes S^{-1} \otimes \mathrm{H}_{\iota}^{a}$ (which can be viewed as an element of $\mathrm{H}_{\iota}^{\overline{a}} \otimes S^{-p} \otimes S^{-1} \otimes \mathrm{H}_{\iota}^{a}$) inverse to this pairing. That is, such that
\begin{equation}\label{eqn: snake}
    \includegraphics[scale=1.2]{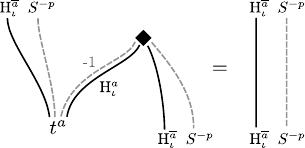}\;\;,
\end{equation}
where the diamond denotes the pairing \eqref{eqn: pairing}.

\begin{rem}
    Equation \eqref{eqn: snake} is the condition that $t^a$ is left inverse to the pairing. That $t^a$ is right inverse to the pairing follows from the fact that $\mathrm{H}_{\iota}^a$ and $\mathrm{H}_{\iota}^{\overline{a}}$ have the same dimension: a left inverse square matrix to a square matrix is the two-sided inverse.
\end{rem}

We use the inverses $\{t^a\}_{a \in Q_1}$ to define a degree zero linear map
\begin{equation*}
    t:S^1 \otimes 
    \underbrace{
    ({\bigoplus}_{a \in Q_1} \mathrm{H}_{\iota}^{\overline{a}} \otimes S^{-p})^*}_
    {\subset \; \widetilde{\mathcal{R}}(\overline{Q},{V},{\iota})^*}
    \otimes 
    \underbrace{({\bigoplus}_{a \in Q_1} \mathrm{H}_{\iota}^a)^*}_
    {\subset \; \widetilde{\mathcal{R}}(\overline{Q},{V},{\iota})^*}
    \to \mathbb{C}
\end{equation*}
as follows: on the subspace $S^1 \otimes S^p \otimes (\mathrm{H}_{\iota}^{\overline{a}})^* \otimes (\mathrm{H}_{\iota}^a)^*$ (with $a \in Q_1$) the map $t$ is given by
\begin{equation*}
    \includegraphics[scale=1.2]{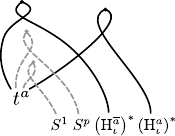}\;\;,
\end{equation*}
where the dots denote the usual pairing between a vector space and its dual. Meanwhile, $t$ is defined to vanish on the subspaces
$S^1 \otimes S^p \otimes (\mathrm{H}_{\iota}^{\overline{a}})^* \otimes (\mathrm{H}_{\iota}^b)^*$ with $a \neq b$ (for $a,b \in Q_1$).

We now use $t$ to define a degree zero linear map
\begin{equation*}
    \widetilde{\Delta}\colon S^1 \otimes \Sym^2(\widetilde{\mathcal{R}}(\overline{Q},{V},{\iota})^*) \to \mathbb{C}:
\end{equation*}
Let $x,y \in \widetilde{\mathcal{R}}(\overline{Q},{V},{\iota})^*$. If $x \in ({\bigoplus}_{a \in Q_1} \mathrm{H}_{\iota}^{\overline{a}} \otimes S^{-p})^*$ and $y \in ({\bigoplus}_{a \in Q_1} \mathrm{H}_{\iota}^a)^*$, then $\widetilde{\Delta}(s^1 \otimes xy) := t(s^1 \otimes x \otimes y)$. On the other hand, if $x,y \in ({\bigoplus}_{a \in Q_1} \mathrm{H}_{\iota}^{\overline{a}} \otimes S^{-p})^*$ or $x,y \in ({\bigoplus}_{a \in Q_1} \mathrm{H}_{\iota}^a)^*$, then $\widetilde{\Delta}(s^1 \otimes xy) := 0$.

\begin{defn}
Consider $\widetilde{\Delta}$ as a degree $1$ map
\begin{equation*}
    \widetilde{\Delta}\colon \Sym^2(\widetilde{\mathcal{R}}(\overline{Q},{V},{\iota})^*) \to \mathbb{C},
\end{equation*}
    Denote by the same symbol the degree 1 second-order differential operator on $\widetilde{\mathcal{O}}$ given by extending $\widetilde{\Delta}$. In particular, $\widetilde{\Delta}$ vanishes on $\Sym^{\leq 1}(\widetilde{\mathcal{R}}(\overline{Q},{V},{\iota})^*)$.
\end{defn}
This makes $(\widetilde{\mathcal{O}}, \widetilde{\Delta})$ into a BV algebra:
 the third-order operator $\widetilde{\Delta}^2$ vanishes on $\Sym^{\leq 3}(\widetilde{\mathcal{R}}(\overline{Q},{V},{\iota})^*)$ and thereby on $\widetilde{\mathcal{O}}$.

\subsection{Realisation in Coordinates}
We now build up an understanding of this BV algebra in coordinates, and begin by choosing bases and setting some notation. Let us also define the space
\begin{equation*}
    \underline{\operatorname{End}} {V}
    :=
    \underline{\operatorname{End}}({\bigoplus}_{i\in \overline{Q}_0} V_i)
    \cong
    {\bigoplus}_{i,j \in \overline{Q}_0} \underline{\operatorname{Hom}}(V_i,V_j),
\end{equation*}
which will be useful for us later.

Choose a homogeneous basis $\{v_{\mu}^{(i)}\}$ for $V_i$, for each vertex $i \in \overline{Q}_0$. The degree of $v_{\mu}^{(i)}$ is denoted by $\mu$. Now let $\tensor{(h^{i \to j} )}{^{\mu}_{\nu}} \in \underline{\operatorname{Hom}}(V_i,V_j)$ be the degree $\nu - \mu$ linear map defined by $\tensor{(h^{i \to j} )}{^{\mu}_{\nu}} (v_{\rho}^{(i)}) = \delta_{\rho}^{\mu} v_{\nu}^{(j)}$. Then $\{\tensor{(h^{i \to j} )}{^{\mu}_{\nu}}\}$ is a homogeneous basis for $\underline{\operatorname{Hom}}(V_i,V_j)$. Furthermore, for each arrow $a\in\overline{Q}_1$, we define $\tensor{(h^{a} )}{^{\mu}_{\nu}} \in \mathrm{H}^a$ to be $\tensor{(h^{s(a) \to t(a)} )}{^{\mu}_{\nu}}$ together with the label $a$. And so we have that
\begin{equation*}
    \{\underbrace{\tensor{(h^{a} )}{^{\mu}_{\nu}}}_{\text{deg. $\nu - \mu$}} \}_{a \in \overline{Q}_1} \; \text{is a basis for} \; {\bigoplus}_{a \in \overline{Q}_1} \mathrm{H}^a, \; \text{and} \;
    \{
    \underbrace{
    \tensor{(h^{i \to j} )}{^{\mu}_{\nu}}
    }_
    {\text{deg. $\nu - \mu$}}
    \}_{i,j \in \overline{Q}_0} \; \text{is a basis for} \; \underline{\operatorname{End}}{V}.
\end{equation*}
\begin{equation*}
    \implies \{\underbrace{\tensor{(h^{a} )}{^{\mu}_{\nu}} \otimes s^{-a}}_{\text{deg. $\nu - \mu - a$}} \}_{a \in \overline{Q}_1} \; \text{is a basis for} \; {\bigoplus}_{a \in \overline{Q}_1} \mathrm{H}^a \otimes S^{-a} = \mathcal{R}(\overline{Q},{V}).
\end{equation*}
We then let $\tensor{(x^a )}{_{\mu}^{\nu}} \in (\mathrm{H}^a)^*$ be the degree $\mu - \nu$ dual vector with $\tensor{(x^a )}{_{\mu}^{\nu}} \left( \tensor{(h^{a} )}{^{\rho}_{\sigma}} \right) = \delta_{\mu}^{\rho} \delta_{\sigma}^{\nu}$, thereby getting a dual basis
\begin{equation*}
    \{\underbrace{\tensor{s^{a} \otimes (x^{a} )}{_{\mu}^{\nu}}}_{\text{deg. $a + \mu - \nu$}} \}_{a \in \overline{Q}_1} \; \text{for} \; \mathcal{R}(\overline{Q},{V})^*, \; \text{which generates} \; \mathcal{O}.
\end{equation*}
This set also generates $\widetilde{\mathcal{O}}$, under the projection $\mathcal{O} \twoheadrightarrow \widetilde{\mathcal{O}}$.

With these bases, we can now consider the components of different maps. For a linear map $f \in \underline{\operatorname{Hom}}(V_i,V_j)$, for example, we write $f = \tensor{f}{_{\mu}^{\nu}} \tensor{(h^{i \to j} )}{^{\mu}_{\nu}}$. For the unique $t^a \in \mathrm{H}_{\iota}^{\overline{a}} \otimes S^{-p} \otimes S^{-1} \otimes \mathrm{H}_{\iota}^{a} \subset \mathrm{H}^{\overline{a}} \otimes S^{-p} \otimes S^{-1} \otimes \mathrm{H}^{a}$ inverse to the pairing \eqref{eqn: pairing} (for some arrow $a \in Q_1$), we write
\begin{equation}\label{eqn: t expansion}
    t^a = (t^a)_{\alpha \gamma}^{\beta \delta} \tensor{(h^{\overline{a}} )}{^{\alpha}_{\beta}} \otimes s^{-p} \otimes s^{-1} \otimes \tensor{(h^{a} )}{^{\gamma}_{\delta}}.
\end{equation}

\begin{prop}
    \textit{The components of $t^a$ are given by}
    \begin{equation}\label{eqn: t-components}
        (t^a)_{\alpha \gamma}^{\beta \delta} =
        \frac{1}{2}(-1)^{\alpha}
        \left[
        \tensor{((\iota^{t(a)})^{-1})}{_{\alpha}^{\delta}} \delta_{\gamma}^{\beta} + (-1)^{\iota \cdot (\alpha + \beta + 1)} \delta_{\alpha}^{\delta}\tensor{((\iota^{s(a)})^{-1})}{_{\gamma}^{\beta}}
        \right].
    \end{equation}
\end{prop}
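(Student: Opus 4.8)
The plan is to write down the tensor $\tau^a$ whose components are declared by the right-hand side of \eqref{eqn: t-components}, check directly that it is a left inverse to the pairing \eqref{eqn: pairing} in the sense of \eqref{eqn: snake}, and then invoke uniqueness. By Proposition \ref{prop: non-degeneracy} the pairing restricted to $\mathrm{H}^a_\iota \otimes \mathrm{H}^{\overline a}_\iota$ is non-degenerate, hence has a unique inverse, and by the remark following \eqref{eqn: snake} any left inverse of the correct shape is automatically the two-sided inverse $t^a$. So it suffices to prove $\tau^a = t^a$, for which I must (i) check that $\tau^a$ genuinely lies in the intertwiner subspace $\mathrm{H}^{\overline a}_\iota \otimes S^{-p} \otimes S^{-1} \otimes \mathrm{H}^a_\iota$, and (ii) verify the zig-zag identity.

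For (i), I would apply $[\iota,-]$ to each of the two tensor legs of $\tau^a$ and show the result vanishes, using $(\iota^i)^{-1} = \lambda^{-1}\iota^i$ (so that $\iota^{-1}$ is again of degree $-p-1$ and central up to the scalar $\lambda^{-1}$) together with $(\iota^i)^2 = \lambda\,\id$. Neither summand of \eqref{eqn: t-components} lies in the intertwiner subspace on its own --- the first carries an $(\iota^{t(a)})^{-1}$-leg and the second an $(\iota^{s(a)})^{-1}$-leg, and $[\iota,-]$ moves one into a shape matching the other --- so the point is that the two contributions cancel, and the prefactor $\tfrac12$ together with the signs $(-1)^\alpha$ and $(-1)^{\iota\cdot(\alpha+\beta+1)}$ is exactly what makes this happen. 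Conceptually, $\tau^a$ is the image of the supertrace copairing of $\mathrm{H}^a \otimes \mathrm{H}^{\overline a}$, twisted by $\iota^{-1}$, under the projector $f \mapsto \tfrac12\bigl(f + (-1)^{\iota\cdot f}\,\iota^{-1}\circ f\circ\iota\bigr)$ onto the commutant $\mathrm{H}_\iota$; this projector is precisely the one recorded in the identity $\psi = B_\iota(\tfrac12\,\iota^{-1}\circ\psi)$ used in the proof of Lemma \ref{lem: B_iota image}.

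For (ii), I would contract the $\mathrm{H}^{\overline a}_\iota$-leg of $\tau^a$ into the pairing \eqref{eqn: pairing} against an arbitrary $\phi \in \mathrm{H}^a_\iota$ (choosing whichever of the two bendings of \eqref{eqn: snake} is drawn) and show the output equals $\phi$ up to the sign dictated by the diagram. Writing $\phi = \phi_\mu^\nu\,(h^a)^\mu_\nu$, a direct computation (only the basis vector $v^{(t(a))}_\alpha$ has nonzero image under the composite) gives $\operatorname{str}\bigl(\iota \circ \phi \circ (h^{\overline a})^\alpha_\beta\bigr) = (-1)^{\alpha}\,\phi_\beta^\sigma\,(\iota)_\sigma^\alpha$. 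Feeding this into the first summand of \eqref{eqn: t-components} and summing over $\alpha$ produces $\sum_\alpha (\iota)_\sigma^\alpha\,\bigl((\iota^{t(a)})^{-1}\bigr)_\alpha^\delta = \delta^\delta_\sigma$, collapsing that summand to $\tfrac12\phi$ --- this uses only $\iota\circ\iota^{-1} = \id$. For the second summand the analogous manipulation leaves a stray $\iota^{t(a)}$, which is pushed past $\phi$ using exactly the intertwining relation $\iota^{t(a)}\circ\phi = (-1)^{\iota\cdot\phi}\,\phi\circ\iota^{s(a)}$ (this is where the membership $\phi \in \mathrm{H}^a_\iota$ enters), after which $(\iota^{s(a)})^{-1}$ cancels the $\iota^{s(a)}$ and this summand too collapses to $\tfrac12\phi$; together they give $\phi$. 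The symmetric statement, contracting against an arbitrary $\psi \in \mathrm{H}^{\overline a}_\iota$, is handled identically.

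I expect the main obstacle to be the sign bookkeeping rather than any conceptual difficulty: one must simultaneously track the Koszul signs produced by the degree $1$ and degree $-p$ shift lines $S^{\pm 1}, S^{\pm p}$ crossing one another and the $\mathrm{H}$-legs in \eqref{eqn: snake}, the signs from cyclically permuting the odd map $\iota$ (of parity $p+1$) under $\operatorname{str}$, and the parities $\alpha,\beta,\dots$ of the chosen homogeneous basis vectors --- and then see that the combination collapses to exactly the prefactor $\tfrac12(-1)^\alpha[\cdots]$ in \eqref{eqn: t-components} rather than to one of its many sign-twisted variants. It may be cleanest to organize the whole computation through the substitution $\psi = B_\iota(\tfrac12\,\iota^{-1}\circ\psi)$ of Lemma \ref{lem: B_iota image} together with the identity $\operatorname{str}(\iota\circ\phi\circ B_\iota(f)) = 2\lambda(-1)^{\iota\cdot\phi}\operatorname{str}(\phi\circ f)$ already established in the proof of Proposition \ref{prop: non-degeneracy}, which reduces everything to the plain supertrace copairing and a single application of the projector onto $\mathrm{H}_\iota$.
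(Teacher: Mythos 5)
Your proposal follows exactly the paper's own strategy: take \eqref{eqn: t-components} as an ansatz, verify that it lies in $\mathrm{H}_{\iota}^{\overline{a}} \otimes S^{-p} \otimes S^{-1} \otimes \mathrm{H}_{\iota}^{a}$ and that it satisfies the snake identity \eqref{eqn: snake}, and conclude by uniqueness of the (two-sided) inverse to the non-degenerate pairing. The paper dismisses these two verifications as ``tedious but straightforward calculations which we do not detail here,'' whereas you sketch them explicitly --- including the use of the intertwining relation $\iota^{t(a)}\circ\phi = (-1)^{\iota\cdot\phi}\phi\circ\iota^{s(a)}$ to collapse the second summand and the projector interpretation via Lemma \ref{lem: B_iota image} --- and your outline is consistent with what those computations require.
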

\begin{proof}
    Assume that $t^a$ is given by \eqref{eqn: t-components}. By definition, this $t^a$ lies in $\mathrm{H}^{\overline{a}} \otimes S^{-p} \otimes S^{-1} \otimes \mathrm{H}^{a}$, but it is not known a priori whether it lies in $\mathrm{H}_{\iota}^{\overline{a}} \otimes S^{-p} \otimes S^{-1} \otimes \mathrm{H}_{\iota}^{a}$. So, to prove that the ansatz \eqref{eqn: t-components} is correct, we must show that
    \begin{enumerate}
        \item $t^a$ lies in $\mathrm{H}_{\iota}^{\overline{a}} \otimes S^{-p} \otimes S^{-1} \otimes \mathrm{H}_{\iota}^{a}$, and
        \item $t^a$ satisfies the identity \eqref{eqn: snake}.
    \end{enumerate}
    Equivalently, that
    \begin{enumerate}
        \item $(C_\iota \otimes 1_{S^{-p} \otimes S^{-1} \otimes \mathrm{H}^{a}}) (t^a) = 0$ and $(1_{\mathrm{H}^{\overline{a}} \otimes S^{-p} \otimes S^{-1}} \otimes C_{\iota})(t^a) = 0$, where $C_\iota$ is the commutator with $\iota$, and then that
        \item $\left(1_{\mathrm{H}_{\iota}^{\overline{a}} \otimes S^{-p}} \otimes \operatorname{str}(\iota \circ - \circ -)\right)
        \underbrace{\left(t^a \otimes B_{\iota}(\tensor{(h^{\overline{a}} )}{^{\mu}_{\nu}}) \otimes s^{-p}\right)}
        _{\in \mathrm{H}_{\iota}^{\overline{a}} \otimes S^{-p} \otimes S^{-1} \otimes \mathrm{H}_{\iota}^{a} \otimes \mathrm{H}_{\iota}^{\overline{a}} \otimes S^{-p}}
        = B_{\iota}(\tensor{(h^{\overline{a}} )}{^{\mu}_{\nu}}) \otimes s^{-p}$
        for all $\mu,\nu$, and where $\operatorname{str}(\iota \circ - \circ -)$ denotes the pairing \eqref{eqn: pairing}. Here we have used Lemma \ref{lem: B_iota image}.
    \end{enumerate}
    These are tedious but straightforward calculations which we do not detail here.
\end{proof}
It is convenient to drop the arrow label $a$ in \eqref{eqn: t-components}, defining the components
\begin{equation}\label{eqn: t-components generalisation}
    t_{\alpha \gamma}^{\beta \delta} :=
    \frac{1}{2}(-1)^{\alpha}
    \left[
    \tensor{(\iota^{-1})}{_{\alpha}^{\delta}} \delta_{\gamma}^{\beta} + (-1)^{\iota \cdot (\alpha + \beta + 1)} \delta_{\alpha}^{\delta}\tensor{(\iota^{-1})}{_{\gamma}^{\beta}}
    \right],
\end{equation}
where, for example, it is understood that $\tensor{(\iota^{-1})}{_{\mu}^{\nu}} = \tensor{((\iota^{i})^{-1})}{_{\mu}^{\nu}}$ if the indices $\mu$ and $\nu$ are labeling the basis $\{v_{\mu}^{(i)}\}$ of $V_i$ --- so that there is no ambiguity\footnote{Note that \eqref{eqn: t-components generalisation} is implicitely a generalisation of \eqref{eqn: t-components}, akin to defining ``$(t^a)_{\mu \rho}^{\nu \sigma}$'' for all $a \in \overline{Q}_1$, not just $a \in Q_1$.}.
These satisfy $t_{\gamma \alpha}^{\delta \beta} = (-1)^{(\alpha + \beta)\cdot(\iota + 1)}t_{\alpha \gamma}^{\beta \delta}$.

As explained before, the inverses $\{t^a\}_{a \in Q_1}$ define a degree zero linear map
\begin{equation*}
    t:S^1 \otimes 
    \underbrace{
    ({\bigoplus}_{a \in Q_1} \mathrm{H}_{\iota}^{\overline{a}} \otimes S^{-p})^*}_
    {\subset \; \widetilde{\mathcal{R}}(\overline{Q},{V},{\iota})^*}
    \otimes 
    \underbrace{({\bigoplus}_{a \in Q_1} \mathrm{H}_{\iota}^a)^*}_
    {\subset \; \widetilde{\mathcal{R}}(\overline{Q},{V},{\iota})^*}
    \to \mathbb{C}.
\end{equation*}
Denoting the images of $\tensor{(x^{{a}})}{_{\mu}^{\nu}}$ under the projection $(\mathrm{H}^a)^* \twoheadrightarrow (\mathrm{H}^a_{\iota})^*$ by $\tensor{(x^{{a}})}{_{\mu}^{\nu}}$ also\footnote{In $(\mathrm{H}^a_{\iota})^*$, these coordinates $x$ satisfy a linear relation, given by the vanishing of $Y^\nu_\mu$ in \eqref{eq:Ymunu}.}, this map sends $s^1 \otimes s^p \otimes \tensor{(x^{\overline{a}})}{_{\mu}^{\nu}} \otimes \tensor{(x^b)}{_{\rho}^{\sigma}}$ to 0 when $a \neq b$ (for $a,b \in Q_1$), and sends $s^1 \otimes s^p \otimes \tensor{(x^{\overline{a}})}{_{\mu}^{\nu}} \otimes \tensor{(x^a)}{_{\rho}^{\sigma}}$ (for $a \in Q_1$) to
\begin{equation*}
    \includegraphics[scale=1.2]{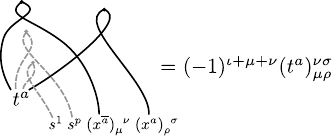}\;\;,
\end{equation*}
as a calculation shows.

The resulting degree 1 map $\widetilde{\Delta}\colon \Sym^2(\widetilde{\mathcal{R}}(\overline{Q},{V},{\iota})^*) \to \mathbb{C}$ is determined by (for $a,b \in Q_1$)
\begin{itemize}
    \item $\widetilde{\Delta}\left(\left(\tensor{(x^{a})}{_{\mu}^{\nu}} \right) \cdot  \left( \tensor{(x^{b})}{_{\rho}^{\sigma}} \right) \right) = 0 = \widetilde{\Delta}\left( \left( s^p \otimes \tensor{(x^{\overline{a}})}{_{\mu}^{\nu}} \right) \cdot \left(\tensor{s^p \otimes (x^{\overline{b}})}{_{\rho}^{\sigma}} \right) \right)$, and
    \item $\widetilde{\Delta}\left( \left( s^p \otimes \tensor{(x^{\overline{a}})}{_{\mu}^{\nu}} \right) \cdot  \left(\tensor{(x^{b})}{_{\rho}^{\sigma}} \right) \right) = (-1)^{\iota + \mu + \nu} t_{\mu\rho}^{\nu \sigma} \delta^{ab}$.
\end{itemize}
Using the symmetry property $t_{\gamma \alpha}^{\delta \beta} = (-1)^{(\alpha + \beta)\cdot(\iota + 1)}t_{\alpha \gamma}^{\beta \delta}$, we can write
\begin{equation*}
    \widetilde{\Delta}\left( \left( \tensor{(x^{a})}{_{\mu}^{\nu}} \right) \cdot \left( \tensor{s^p \otimes (x^{\overline{b}})}{_{\rho}^{\sigma}} \right) \right) = (-1)^{(\mu + \nu)\cdot \iota}t_{\mu \rho}^{\nu \sigma}\delta^{ab}.
\end{equation*}
Hence we can write the action of the operator $\widetilde{\Delta}$ on $\Sym^2(\widetilde{\mathcal{R}}(\overline{Q},{V},{\iota})^*)$ compactly as
\begin{equation}\label{eqn: delta-tilde, explicitely}
    \widetilde{\Delta}\left( \left( s^{a} \otimes \tensor{(x^{{a}})}{_{\alpha}^{\beta}} \right) \cdot  \left(\tensor{s^{b} \otimes (x^{{b}})}{_{\gamma}^{\delta}} \right) \right) 
    = \langle a,b \rangle (-1)^{(\alpha + \beta)(1 + b)}t_{\alpha \gamma}^{\beta \delta},
\end{equation}
where $a$ and $b$ are now arbitrary arrows in $\overline{Q}$. In other words, we have the following.
\begin{prop}\label{prop:BVRepexp}
\textit{ Denote $s^{a} \otimes \tensor{(x^a)}{_{\mu}^{\nu}}$ by $\tensor{(X^a)}{_{\mu}^{\nu}}$. The BV operator $\widetilde{\Delta}$ of the BV algebra $(\widetilde{\mathcal{O}},\widetilde{\Delta})$ is determined by}
\begin{equation*}
    \widetilde{\Delta}\left( \tensor{(X^{{a}})}{_{\alpha}^{\beta}} \cdot \tensor{(X^{{b}})}{_{\gamma}^{\delta}}\right)
    = \frac{1}{2} \langle a,b \rangle (-1)^{\beta + b(\alpha + \beta)}
    \left[
    \tensor{((\iota^{s(a)})^{-1})}{_{\alpha}^{\delta}} \delta_{\gamma}^{\beta} + (-1)^{\iota \cdot (\alpha + \beta + 1)} \delta_{\alpha}^{\delta}\tensor{((\iota^{s(b)})^{-1})}{_{\gamma}^{\beta}}
    \right].
\end{equation*}
\end{prop}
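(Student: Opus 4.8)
The plan is to combine the two ingredients already in hand: the compact formula \eqref{eqn: delta-tilde, explicitely} expressing $\widetilde{\Delta}$ on $\Sym^2(\widetilde{\mathcal{R}}(\overline{Q},{V},{\iota})^*)$ through the tensor $t_{\alpha \gamma}^{\beta \delta}$, and the explicit components \eqref{eqn: t-components generalisation} of $t_{\alpha \gamma}^{\beta \delta}$. Substituting the latter into the former, using the shorthand $\tensor{(X^a)}{_\mu^\nu} := s^a \otimes \tensor{(x^a)}{_\mu^\nu}$, gives
\[
\widetilde{\Delta}\!\left( \tensor{(X^{a})}{_{\alpha}^{\beta}} \cdot \tensor{(X^{b})}{_{\gamma}^{\delta}}\right)
= \frac12\,\langle a,b\rangle\,(-1)^{(\alpha+\beta)(1+b)+\alpha}
\left[ \tensor{(\iota^{-1})}{_{\alpha}^{\delta}}\delta_{\gamma}^{\beta} + (-1)^{\iota\cdot(\alpha+\beta+1)}\delta_{\alpha}^{\delta}\tensor{(\iota^{-1})}{_{\gamma}^{\beta}} \right],
\]
so all that remains is to match signs and to resolve which vertex each $\iota^{-1}$ belongs to.

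For the sign I would note that $(\alpha+\beta)(1+b)+\alpha = 2\alpha+\beta+b(\alpha+\beta) \equiv \beta + b(\alpha+\beta) \pmod 2$, which is exactly the exponent in the claimed formula. For the vertices, observe that both sides vanish unless $\langle a,b\rangle \neq 0$, i.e.\ unless $b = \overline a$; in that case $s(b) = t(a)$ and $t(b) = s(a)$, and tracking the index dictionary from the coordinate set-up shows that $\alpha,\delta$ both index the basis of $V_{s(a)}$ while $\gamma,\beta$ both index the basis of $V_{t(a)} = V_{s(b)}$. The convention attached to \eqref{eqn: t-components generalisation} then reads $\tensor{(\iota^{-1})}{_\alpha^\delta} = \tensor{((\iota^{s(a)})^{-1})}{_\alpha^\delta}$ and $\tensor{(\iota^{-1})}{_\gamma^\beta} = \tensor{((\iota^{s(b)})^{-1})}{_\gamma^\beta}$, and substituting these two identifications yields precisely the stated formula. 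Finally, extending $\widetilde{\Delta}$ as a second-order differential operator (vanishing on $\Sym^{\le 1}$) determines it on all of $\widetilde{\mathcal{O}}$, so the values on $\Sym^2$ given by this formula are the entire content.

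There is no real obstacle here: the proposition is a bookkeeping reformulation of \eqref{eqn: delta-tilde, explicitely} together with \eqref{eqn: t-components generalisation}, and the only points requiring attention are the parity arithmetic in the prefactor and keeping the index-to-vertex assignment straight in the case $b = \overline a$, both of which are routine.
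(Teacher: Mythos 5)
Your proposal is correct and matches the paper's own treatment: the proposition is stated there as an immediate reformulation (``in other words'') of \eqref{eqn: delta-tilde, explicitely} with the components \eqref{eqn: t-components generalisation} substituted in, which is exactly what you do. Your sign check $(\alpha+\beta)(1+b)+\alpha \equiv \beta + b(\alpha+\beta) \pmod 2$ and your resolution of the vertex labels via $b=\overline a$ (so that $\alpha,\delta$ index $V_{s(a)}$ and $\gamma,\beta$ index $V_{s(b)}=V_{t(a)}$) are both accurate.
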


\section{A BV algebra morphism} \label{section: A BV algebra morphism}
Finally, we construct a BV algebra morphism $\varphi\colon \mathscr{B} \to \widetilde{\mathcal{O}}$. In order to define such map, we will use an auxiliary space\footnote{Such a space, consisting of matrices with entries in $\widetilde{\mathcal O}$, appears in \cite{AGS}, which was our inspiration.}
\begin{equation*}
    \widetilde{\mathcal{O}} \otimes \underline{\operatorname{End}} {V}
    :=
    \widetilde{\mathcal{O}} \otimes \underline{\operatorname{End}}({\bigoplus}_{i\in \overline{Q}_0} V_i)
    \cong
    \widetilde{\mathcal{O}} \otimes {\bigoplus}_{i,j \in \overline{Q}_0} \underline{\operatorname{Hom}}(V_i,V_j).
\end{equation*}
This space is a graded algebra, and we take the multiplication in $\underline{\operatorname{End}}{V}$ to be totally reversed composition: that is, $f \cdot g := (-1)^{f \cdot g} g\circ f$, for homogeneous $f,g$. We do this for compatibility with paths, which we read from left to right. 
\begin{defn}
 Let us now introduce the following elements of $\widetilde{\mathcal{O}} \otimes \underline{\operatorname{End}} {V}$:
\begin{itemize}
    \item Let $M_{\iota} := 1_{\widetilde{\mathcal{O}}} \otimes \sum_i \iota^i$ and $M_{\iota^{-1}} := 1_{\widetilde{\mathcal{O}}} \otimes \sum_i (\iota^i)^{-1}$. $M_{\iota}$ has degree $|\iota| = -p-1$, and $M_{\iota^{-1}}$ degree $p+1$.
    
    \item For each vertex $i \in \overline{Q}_0$, define $M_{e_i}$ to be $1_{\widetilde{\mathcal{O}}} \otimes \mathrm{id}_{V_i}$, which has degree 0.
    
    \item For each arrow $a \in \overline{Q}_1$, $M_a$ is defined as the image of $s^{a} \otimes \mathrm{id}_{\mathrm{H}^a} \in S^{a} \otimes \underline{\operatorname{End}}(\mathrm{H}^a)$ under
    \begin{equation}\label{eqn: M_a defn}
        S^{a} \otimes \underline{\operatorname{End}}(\mathrm{H}^a) \stackrel{\sim}{\to} S^{a} \otimes (\mathrm{H}^a)^* \otimes (\mathrm{H}^a) \hookrightarrow \mathcal{O} \otimes \underline{\operatorname{End}} {V}
        \twoheadrightarrow \widetilde{\mathcal{O}} \otimes \underline{\operatorname{End}} {V}.
    \end{equation}
    The first map in \eqref{eqn: M_a defn} uses the inverse to the isomorphism 
    \begin{equation*}
        (\mathrm{H}^a)^* \otimes (\mathrm{H}^a) \stackrel{\sim}{\to} \underline{\operatorname{End}}(\mathrm{H}^a); \quad x \otimes h \mapsto \left[g \mapsto (-1)^{x \cdot h} x(g)h \right],
    \end{equation*} and, in the second map in \eqref{eqn: M_a defn}, $S^{a} \otimes (\mathrm{H}^a)^* \cong (\mathrm{H}^a \otimes S^{-a})^*$ is included into $\mathcal{O}$ linearly and the arrow ``$a$'' labeling $\mathrm{H}^a = \underline{\operatorname{Hom}}(V_{s(a),}V_{t(a)})$ is forgotten upon inclusion into $\underline{\operatorname{End}} {V}$. As these are all degree 0 maps, $M_a$ has degree $|a|$. In coordinates we have $M_a = (-1)^{\mu + \nu} \tensor{(X^a)}{_{\mu}^{\nu}} \otimes \tensor{(h^{s(a) \to t(a)})}{^{\mu}_{\nu}}.$
\end{itemize}
If $a_1 \cdots a_n$ is a path in $\overline{Q}$, then we also define $M_{a_1 \cdots a_n} := M_{a_1} \cdots M_{a_n}$, which makes it clear why we chose reversed composition as the multiplication in $\underline{\operatorname{End}}{V}$. \end{defn}
The upshot of this is that we can view ``$M$'' as a map that assigns an $\widetilde{\mathcal{O}}$-valued matrix to every path in $\overline{Q}$. Better yet, path multiplication is also respected as additionally we have $M_{e_{s(a)}} \cdot M_a = M_a = M_a \cdot M_{e_{t(a)}}$, for example.

\begin{prop}\label{prop: commutativity}
    \textit{The matrices $M_{e_i}$ and $M_a$ (graded) commute with $M_{\iota}$, for all vertices $i \in \overline{Q}_0$ and arrows $a \in \overline{Q}_1$.}
\end{prop}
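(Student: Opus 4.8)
The plan is to reduce the claim to a pointwise statement about endomorphisms and then invoke the defining property of the intertwining representation variety. First I would recall that $M_{\iota} = 1_{\widetilde{\mathcal O}} \otimes \sum_i \iota^i$ lives in the $\underline{\operatorname{End}}\,V$ factor only, so its graded commutator with any element $x \otimes f$ of $\widetilde{\mathcal O}\otimes\underline{\operatorname{End}}\,V$ is $(-1)^{|x||\iota|}\, x \otimes [\,\sum_i \iota^i, f\,]$, where the bracket is the graded commutator in $\underline{\operatorname{End}}\,V$ computed with respect to the \emph{reversed}-composition product $f\cdot g := (-1)^{fg} g\circ f$. For $M_{e_i} = 1_{\widetilde{\mathcal O}}\otimes \mathrm{id}_{V_i}$ the statement is immediate: $\mathrm{id}_{V_i}$ is central in $\underline{\operatorname{End}}\,V$ (it is the identity on the block $V_i$ and annihilates the others, and $\sum_j \iota^j$ preserves each block $V_i$ since each $\iota^j\in\underline{\operatorname{End}}(V_j)$), so the commutator vanishes on the nose.

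The substantive case is $M_a$ for $a\in\overline{Q}_1$. Here I would use the coordinate expression $M_a = (-1)^{\mu+\nu}\,\tensor{(X^a)}{_\mu^\nu}\otimes \tensor{(h^{s(a)\to t(a)})}{^\mu_\nu}$, but the cleaner route is conceptual: $M_a$ is, by construction \eqref{eqn: M_a defn}, built from the identity element of $\underline{\operatorname{End}}(\mathrm H^a)$ transported into $\mathcal O\otimes\underline{\operatorname{End}}\,V$ and then projected to $\widetilde{\mathcal O}\otimes\underline{\operatorname{End}}\,V$. The key point is that in $\widetilde{\mathcal O}$ the coordinates on $\mathrm H^a$ have been quotiented so that only the intertwining maps $\mathrm H^a_\iota$ survive — precisely, $\widetilde{\mathcal O}\cong \mathcal O/I$ with $I$ generated by the annihilator of $\widetilde{\mathcal R}(\overline Q,V,\iota)$, by Remark \ref{rem:subandquot}. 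So I would argue that the image of $M_a$ in $\widetilde{\mathcal O}\otimes\underline{\operatorname{End}}\,V$ equals the image of $s^a\otimes(\text{identity of }\underline{\operatorname{End}}(\mathrm H^a_\iota))$ under the analogous maps with $\mathrm H^a$ replaced by $\mathrm H^a_\iota$; concretely, if $\{\tensor{(h^a)}{^\mu_\nu}\}$ is adapted so that a spanning subset indexes $\mathrm H^a_\iota$ and the dual coordinates of the complementary ones lie in $I$, then modulo $I$ the sum defining $M_a$ runs effectively over $\mathrm H^a_\iota$. Since every $h\in\mathrm H^a_\iota$ satisfies $[\iota,h]=\iota^{t(a)}\circ h - (-1)^{\iota\cdot h} h\circ\iota^{s(a)} = 0$ by definition of $\mathrm H^a_\iota$, and since $\sum_i\iota^i$ acts on the relevant blocks exactly as $\iota^{t(a)}$ on the left and $\iota^{s(a)}$ on the right, the graded commutator $[\,\sum_i\iota^i, h\,]$ (in reversed composition) vanishes for each such $h$. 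Hence $[M_\iota, M_a] = 0$ in $\widetilde{\mathcal O}\otimes\underline{\operatorname{End}}\,V$.

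An alternative, fully coordinate-based proof of the $M_a$ case: write out $[M_\iota, M_a]$ using $M_\iota = 1\otimes\sum_i\iota^i$, expand $\iota = \tensor{(\iota)}{_\rho^\sigma}\tensor{(h)}{^\rho_\sigma}$ and $M_a$ as above, compute the two reversed-composition products, and observe that the resulting coefficient is $(-1)^{\bullet}\tensor{(X^a)}{_\mu^\nu}$ times exactly the components of the linear form $Y$ (the relation $\tensor{Y}{^\nu_\mu}$ of \eqref{eq:Ymunu}, alluded to in the footnote after the projection $(\mathrm H^a)^*\twoheadrightarrow(\mathrm H^a_\iota)^*$) cutting out $\mathrm H^a_\iota$; this form is by definition in the ideal $I$, so the commutator is zero in $\widetilde{\mathcal O}\otimes\underline{\operatorname{End}}\,V$.

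The main obstacle is bookkeeping rather than conceptual: one must be careful that the graded commutator with $M_\iota$ is the one induced by the reversed-composition product (so that a genuine $[\iota,\cdot]$ with ordinary composition appears, up to a global sign of the form $(-1)^{(p+1)|x|}$ which is harmless), and that $\sum_i\iota^i$, evaluated against a matrix unit $h\in\underline{\operatorname{Hom}}(V_{s(a)},V_{t(a)})$, reproduces $\iota^{t(a)}$ on one side and $\iota^{s(a)}$ on the other with the correct Koszul sign — matching precisely the sign in the definition $[\iota,f]=\iota\circ f - (-1)^{\iota\cdot f} f\circ\iota$ of $\mathrm H^a_\iota$. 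Once those signs are checked, the vanishing is the defining property of $\mathrm H^a_\iota$ and the passage to $\widetilde{\mathcal O}$ via Remark \ref{rem:subandquot}.
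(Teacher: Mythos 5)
Your proposal is correct and, in its coordinate-based form, is essentially the paper's own proof: compute $[M_\iota,M_a]$ in components, recognise the coefficient as the linear form $Y^\nu_\mu$ annihilating $\mathrm H^a_\iota$, and conclude it vanishes in $\widetilde{\mathcal O}$ via Remark \ref{rem:subandquot}. Your first, basis-adapted route is just a mild conceptual repackaging of the same observation, and the sign points you flag (reversed composition turning $[\iota,\cdot]_{\mathrm{rev}}$ into $-[\iota,\cdot]$) work out as you anticipate.
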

\begin{proof}
    It is clear that $[M_\iota, M_{e_i}] = 0$. We now consider $[M_{\iota}, M_a]$. Before proceeding, notice that
    \begin{equation*}
        \tensor{(h^{i \to j})}{^{\mu}_{\nu}} \cdot \tensor{(h^{k \to l})}{^{\rho}_{\sigma}}
        =
        (-1)^{(\mu + \nu)(\rho + \sigma)}\delta^{jk}\delta_{\nu}^{\rho}
        \tensor{(h^{i \to l})}{^{\mu}_{\sigma}}.
    \end{equation*}
    We then calculate
    \begin{align*}
        M_{\iota} \cdot M_a &= \left[{\sum}_i\tensor{(\iota^i)}{_{\mu}^{\rho}} \otimes \tensor{(h^{i \to i})}{^{\mu}_{\rho}}\right] \cdot
        \left[ (-1)^{\sigma + \nu} \tensor{(X^a)}{_{\sigma}^{\nu}} \otimes \tensor{(h^{s(a) \to t(a)})}{^{\sigma}_{\nu}} \right]\\
        &= (-1)^{\sigma + \nu + (\mu + \rho)(a + \sigma + \nu)}
        {\sum}_i \tensor{(\iota^i)}{_{\mu}^{\rho}}\tensor{(X^a)}{_{\sigma}^{\nu}} \otimes
        \underbrace{
        \tensor{(h^{i \to i})}{^{\mu}_{\rho}} \cdot \tensor{(h^{s(a) \to t(a)})}{^{\sigma}_{\nu}}}
        _{(-1)^{(\mu + \rho)(\sigma + \nu)}\delta^{i,s(a)}\delta_{\rho}^{\sigma}\tensor{(h^{i \to t(a)})}{^{\mu}_{\nu}}}\\
        &= (-1)^{\rho + \nu + a(\mu + \rho)}  \tensor{(\iota^{s(a)})}{_{\mu}^{\rho}}\tensor{(X^a)}{_{\rho}^{\nu}} \otimes \tensor{(h^{s(a) \to t(a)})}{^{\mu}_{\nu}}.
    \end{align*}
    Using that $\rho + a(\mu + \rho) = \mu + \iota + a\iota = \mu + \iota \mod 2$ (or $\tensor{(\iota^i)}{_{\mu}^{\rho}} = 0$), we get
    \begin{equation*}
        M_{\iota} \cdot M_a = (-1)^{\mu + \nu + \iota}  \tensor{(\iota^{s(a)})}{_{\mu}^{\rho}}\tensor{(X^a)}{_{\rho}^{\nu}} \otimes \tensor{(h^{s(a) \to t(a)})}{^{\mu}_{\nu}}.
    \end{equation*}
    Similarly calculating $M_a \cdot M_{\iota}$, we find that
    \begin{equation} \label{eq:Ymunu}
        [M_{\iota},M_{a}] = \underbrace{\left[
        (-1)^{\mu + \nu + \iota}  \tensor{(\iota^{s(a)})}{_{\mu}^{\rho}}\tensor{(X^a)}{_{\rho}^{\nu}}
        -
        (-1)^{p(\mu + \nu)}\tensor{(X^a)}{_\mu^\rho}\tensor{(\iota^{t(a)})}{_\rho^\nu}
        \right]}
        _{=: Y_{\mu}^{\nu}}
        \otimes \tensor{(h^{s(a) \to t(a)})}{^{\mu}_{\nu}}.
    \end{equation}
    By Remark \ref{rem:subandquot}, $Y_{\mu}^\nu = 0$ in $\widetilde{\mathcal{O}}$ precisely if it annihilates $\mathrm{H}^a_{\iota}$. To this end, let $f \in \mathrm{H}^a_\iota$. Then
    \begin{align*}
        Y_\mu^\nu(f) &= (-1)^{\mu + \nu + \iota}  \tensor{(\iota^{s(a)})}{_{\mu}^{\rho}}\tensor{f}{_{\rho}^{\nu}}
        -
        (-1)^{p(\mu + \nu)}\tensor{f}{_\mu^\rho}\tensor{(\iota^{t(a)})}{_\rho^\nu}\\
        &= (-1)^{f}  \tensor{(\iota^{s(a)})}{_{\mu}^{\rho}}\tensor{f}{_{\rho}^{\nu}}
        -
        (-1)^{p(f + \iota)}\tensor{f}{_\mu^\rho}\tensor{(\iota^{t(a)})}{_\rho^\nu}\\
        &= (-1)^f \tensor{\left(f \circ \iota - (-1)^{\iota f} \iota \circ f\right)}{_\mu^\nu} = 0.
    \end{align*}
    Thus $[M_\iota,M_a] = 0$.
\end{proof}
Now consider the degree 0 map $(1 \otimes \operatorname{str}):\widetilde{\mathcal{O}} \otimes \underline{\operatorname{End}} {V} \to \widetilde{\mathcal{O}}$. Due to cyclicity of $\operatorname{str}$ (i.e. $\operatorname{str}(f \cdot g) = (-1)^{fg}\operatorname{str}(g \cdot f)$) and commutativity of $\widetilde{\mathcal{O}}$, it is easy to see that $(1 \otimes \operatorname{str})$ is cyclically symmetric too. 

\begin{defn}
    Define $\varphi$ as the degree 0 (graded) algebra morphism $\varphi\colon \mathscr{B} \to \widetilde{\mathcal{O}}$, given by extending a degree 0 linear map $\varphi\colon\mathscr{A}[p+1] = S^{\iota} \otimes \mathscr{A} \to \widetilde{\mathcal{O}}$. The map $\varphi$ is defined on non-constant paths $a_1 \cdots a_n$ by
    \begin{equation*}
         \varphi(s^{\iota} \otimes a_1\cdots a_n) := (1 \otimes \operatorname{str})(M_{\iota} \cdot M_{a_1} \cdots M_{a_n}),
    \end{equation*}
    and on constant paths $e_i$ by
    \begin{equation*}
        \varphi(s^{\iota} \otimes e_{i}) := (1 \otimes \operatorname{str})(M_{\iota} \cdot M_{e_i}).
    \end{equation*}
\end{defn}
We can also write $\varphi$ diagrammatically as 
\begin{equation*}
    \includegraphics[scale=1.2]{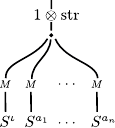}\;\;.
\end{equation*}
  To see that $\varphi$ compatible with the relation \eqref{eq:cyclicpropexplicit}, observe that 
\begin{align*}
    \varphi(s^{\iota} \otimes a_1\cdots a_n) 
    &= (1 \otimes \operatorname{str})(M_{\iota} \cdot M_{a_1} \cdots M_{a_n}) 
    = (-1)^{(\iota + a_1) \cdot (a_2 + \cdots + a_n)}(1 \otimes \operatorname{str})(M_{a_2} \cdots M_{a_n} \cdot M_{\iota} \cdot M_{a_1})\\
    &= (-1)^{a_1 \cdot (a_2 + \cdots + a_n)}(1 \otimes \operatorname{str})(M_{\iota} \cdot M_{a_2} \cdots M_{a_1}) = (-1)^{a_1 \cdot (a_2 + \cdots + a_n)} \varphi(s^{\iota} \otimes a_2\cdots a_1),
\end{align*}
showing that $\varphi$ respects the relations on $\mathscr{A}$.

\subsection[Compatibility of the trace map with BV operators]{Compatibility of the trace map \texorpdfstring{$\varphi$}{phi} with BV operators}.
We now prove that $\varphi$ is a map of BV algebras. Let us start with the following easy observation
\begin{lem}\label{lem:IBLSymHOm}
Let $\mathcal{G}$ be a graded vector space and let $\widetilde{b}\colon \Sym^2 \mathcal{G} \to \mathcal G$ and $\widetilde{c} \colon \mathcal G \to \Sym^2 \mathcal G$ degree 1 maps such that their extensions to differential operators on  $\Sym \mathcal G$ define a BV operator $\Delta = \widetilde{b} + \widetilde{c}$. Let $(\widetilde O, \widetilde \Delta)$ be an arbitrary BV algebra. Then a map of graded algebras $\varphi \colon \Sym \mathcal G \to \widetilde O$ intertwines the BV operators if and only if 
\begin{equation}\label{eq:brPois}
    \varphi(\widetilde{b}(xy)) = \{\varphi(x),  \varphi(y) \}_{\widetilde{\Delta}}\quad \forall x, y \in \mathcal G,
\end{equation}
and 
\begin{equation}\label{eq:cobrDelta}
    \varphi (\widetilde{c}(x)) = \widetilde {\Delta} (\varphi (x)) \quad \forall x \in \mathcal G.
\end{equation}
Here, the bracket $\{a,b\}_{\widetilde{\Delta}} = \widetilde{\Delta}(ab) - \widetilde{\Delta} (a)b - (-1)^{a \cdot b} \widetilde{\Delta} (b) a$ is the odd Poisson bracket associated to $\widetilde{\Delta}$.
\end{lem}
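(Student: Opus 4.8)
The plan is to argue by reduction to generators, exploiting the fact that all three operators in play ($\Delta$ on $\Sym\mathcal G$, $\widetilde\Delta$ on $\widetilde O$, and the would-be intertwining relation) are essentially determined by their behaviour on low symmetric degrees. First I would observe that since $\varphi$ is a morphism of graded algebras, the composite $\varphi\circ\Delta$ and the composite $\widetilde\Delta\circ\varphi$ are both \emph{first-order-type} data relative to the algebra structure: more precisely, the failure of $\varphi$ to intertwine, namely $\mathcal E := \widetilde\Delta\circ\varphi - \varphi\circ\Delta$, is a second-order differential operator $\Sym\mathcal G\to\widetilde O$ (where ``second-order'' is meant in the sense compatible with the algebra map $\varphi$, i.e.\ using the seven-term identity of Remark \ref{rem:7T} for both $\widetilde\Delta$ and the derivation/second-order pieces of $\Delta$). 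Hence $\mathcal E$ vanishes identically as soon as it vanishes on $\Sym^{\le 2}\mathcal G$, i.e.\ on the constants, on $\mathcal G$ itself, and on products $xy$ with $x,y\in\mathcal G$. On $\Sym^0$ both sides kill the unit ($\Delta(1)=0$, $\widetilde\Delta(1)=0$, $\varphi(1)=1$), so there is nothing to check.

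Next I would separate the two remaining cases. On $\Sym^1\mathcal G=\mathcal G$, we have $\Delta|_{\mathcal G}=\widetilde c$ because $\widetilde b$ vanishes on $\Sym^{\le 1}$; so the condition ``$\mathcal E$ vanishes on $\mathcal G$'' is precisely \eqref{eq:cobrDelta}, $\varphi(\widetilde c(x))=\widetilde\Delta(\varphi(x))$. On $\Sym^2\mathcal G$, write $\Delta(xy) = \widetilde b(xy) + \widetilde c(x)\,y + (-1)^{x}\,x\,\widetilde c(y)$ using the definitions of the extensions of $\widetilde b$ (second-order, so producing the $\Sym^1$ term $\widetilde b(xy)$) and $\widetilde c$ (a derivation). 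Applying $\varphi$ and using that $\varphi$ is an algebra map gives $\varphi(\Delta(xy)) = \varphi(\widetilde b(xy)) + \varphi(\widetilde c(x))\varphi(y) + (-1)^x\varphi(x)\varphi(\widetilde c(y))$. On the other side, by the seven-term identity for $\widetilde\Delta$ applied with the third slot equal to $1$ (or directly from the definition of the odd Poisson bracket), $\widetilde\Delta(\varphi(x)\varphi(y)) = \{\varphi(x),\varphi(y)\}_{\widetilde\Delta} + \widetilde\Delta(\varphi(x))\varphi(y) + (-1)^{x}\varphi(x)\widetilde\Delta(\varphi(y))$. Subtracting, and using \eqref{eq:cobrDelta} to cancel the two ``boundary'' terms, one sees that $\mathcal E(xy)=0$ is equivalent to $\varphi(\widetilde b(xy)) = \{\varphi(x),\varphi(y)\}_{\widetilde\Delta}$, which is \eqref{eq:brPois}. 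This establishes that \eqref{eq:brPois} and \eqref{eq:cobrDelta} together are necessary and sufficient.

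To make the ``reduction to $\Sym^{\le 2}$'' rigorous I would spell out the induction on symmetric degree: assuming $\mathcal E$ vanishes on $\Sym^{\le n-1}$, take a product $z\,w$ with $z\in\mathcal G$, $w\in\Sym^{n-1}\mathcal G$, expand $\Delta(zw)$ via the Leibniz-type rule for $\widetilde b+\widetilde c$ (producing terms of the form $\widetilde b(z\,w') \cdot w''$, $\widetilde c(z)\cdot w$, and $z\cdot\Delta(w)$ where $w',w''$ range over a splitting of $w$), apply $\varphi$ and the algebra-morphism property, then do the matching expansion of $\widetilde\Delta(\varphi(z)\varphi(w))$ using the seven-term identity for $\widetilde\Delta$; every term now lies in $\varphi(\Sym^{\le n-1})$-land where the inductive hypothesis and the already-verified degree-$2$ identity apply, giving $\mathcal E(zw)=0$. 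The one genuinely delicate point — the main obstacle — is bookkeeping the Koszul signs in this expansion so that the ``cross'' terms (e.g.\ $\widetilde b$ applied to one factor from $z$ and one from $w$) on the two sides are matched term-by-term; this is exactly the kind of computation that the diagrammatic calculus of \S\ref{section: GLBs and BV Algebras} is designed to control, and I would phrase the induction diagrammatically, or else simply note that both $\widetilde\Delta\circ\varphi$ and $\varphi\circ\Delta$ are second-order differential operators over $\varphi$ in the precise sense of Remark \ref{rem:7T} and invoke the general principle that such an operator is determined by its restriction to $\Sym^{\le 2}$.
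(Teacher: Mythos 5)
Your argument is correct and is essentially the paper's: the authors simply invoke the expansion $\Delta(x_1\cdots x_n)=\sum_i\pm\Delta(x_i)\,x_1\cdots\hat{x}_i\cdots x_n+\sum_{i<j}\pm\{x_i,x_j\}_\Delta\,x_1\cdots\hat{x}_i\cdots\hat{x}_j\cdots x_n$, valid for any BV operator, which is precisely your principle that a second-order operator annihilating the unit is determined by its restriction to $\Sym^{\le 2}\mathcal G$. Your explicit verification on $\Sym^{\le 2}$ and the inductive reduction via the seven-term identity are a more detailed write-up of the same idea, with the signs handled correctly.
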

\begin{proof}
    The result follows immediately from the following property of any BV operator $\Delta$:
    \[ \Delta (x_1 \dots x_n) = \sum_i \pm \Delta{(x_i)}\, x_1 \cdots \widehat{x}_i \cdots x_n + \sum_{i<j} \pm \{x_i, x_j\}_{\Delta} \,x_1 \cdots \widehat{x}_i \cdots \widehat{x}_j \cdots x_n, \]
    with obvious Koszul signs and hats denoting omission. 
\end{proof}
We will take $\mathcal G = \mathscr{A}[p+1]$ with $\widetilde{b}=\widetilde{\operatorname{br}}$ and $\widetilde{c}$ given a suitable multiple of $\widetilde{\delta}$. We now prove the two conditions  \eqref{eq:brPois} and \eqref{eq:cobrDelta} in Propositions \ref{prop: gerstenhaber} and \ref{prop: intertwining with (little) delta}.
\begin{prop}\label{prop: gerstenhaber}
    \textit{The graded algebra morphism $\varphi\colon\mathscr{B} \to \widetilde{\mathcal{O}}$ satisfies, for all $x,y \in \mathscr{A}[p+1]$,
    \begin{equation}\label{eqn: bracket prf 0}
        \varphi\left(\widetilde{\operatorname{br}}(x \cdot y)\right)
        = \{\varphi(x), \varphi(y) \}_{\widetilde{\Delta}}.
    \end{equation}}
\end{prop}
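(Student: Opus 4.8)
The plan is to compute both sides of \eqref{eqn: bracket prf 0} in coordinates using the explicit formulas at hand: the coordinate form of the shifted bracket $\overline{\operatorname{br}} = \widetilde{\operatorname{br}}$ (from \eqref{eqn: Bracket defn} and \eqref{eqn: shift of br and delta}), the definition $\varphi(s^\iota \otimes a_1 \cdots a_n) = (1 \otimes \operatorname{str})(M_\iota M_{a_1} \cdots M_{a_n})$, and Proposition \ref{prop:BVRepexp} for $\widetilde\Delta$ on quadratic monomials. Since $\varphi$ is a graded algebra morphism and $\widetilde{\operatorname{br}}$ extends a map $\Sym^2 \to \Sym^1$, it suffices by Lemma \ref{lem:IBLSymHOm} to verify the identity on $x = s^\iota \otimes A$, $y = s^\iota \otimes B$ for two closed paths $A = a_1 \cdots a_m$, $B = b_1 \cdots b_n$ (the case where one of them is a constant path is trivial, as $\operatorname{br}$ vanishes there). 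First I would write out the left-hand side: $\widetilde{\operatorname{br}}(A \cdot B)$ is a sum over pairs $(i,j)$ of arrows in involution, $\langle a_i, b_j\rangle \ne 0$, producing the spliced closed path $A_{ii} B_{jj}$ with the sign from \eqref{eqn: Bracket defn} (corrected by the shift sign \eqref{f,g transformation}); applying $\varphi$ turns this into a single supertrace $(1\otimes\operatorname{str})(M_\iota M_{A_{ii}} M_{B_{jj}})$.

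For the right-hand side, I would expand $\{\varphi(x), \varphi(y)\}_{\widetilde\Delta} = \widetilde\Delta(\varphi(x)\varphi(y))$ (the other two terms vanish since $\widetilde\Delta$ annihilates the linear generators $\varphi(x), \varphi(y)$, which lie in the degree-$\le 1$ part after noting $\varphi$ lands in $\Sym^{\ge 1}$ — actually one must be a bit careful: $\varphi(x)$ is a polynomial of degree $m$ in the matrix-coordinate generators $X^{a}$, so $\widetilde\Delta(\varphi(x))$ need not vanish; instead one uses that $\widetilde\Delta$ is a second-order operator, so $\widetilde\Delta(\varphi(x)\varphi(y)) - \widetilde\Delta(\varphi(x))\varphi(y) - (-1)^{\cdots}\widetilde\Delta(\varphi(y))\varphi(x)$ is the "cross-term" part, i.e. the sum over one derivative hitting a generator from $\varphi(x)$ and one hitting a generator from $\varphi(y)$). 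Writing $\varphi(x) = (1\otimes\operatorname{str})(M_\iota M_{a_1}\cdots M_{a_m})$ as a sum of monomials in the $\tensor{(X^{a_k})}{_{\mu}^{\nu}}$, each application of the "cross" second-order part of $\widetilde\Delta$ contracts one generator $\tensor{(X^{a_i})}{}$ appearing in $\varphi(x)$ with one generator $\tensor{(X^{b_j})}{}$ appearing in $\varphi(y)$, via the kernel $t^{a_i}_{\cdots}$ from Proposition \ref{prop:BVRepexp}, which is nonzero only when $b_j = \overline{a_i}$ (this reproduces the indicator $\langle a_i, b_j\rangle$). The key computation is then to recognize that contracting the two matrix factors $M_{a_i}$ (inside the first supertrace) and $M_{b_j} = M_{\overline{a_i}}$ (inside the second) against $t^{a_i}$ — whose two tensor legs carry $(\iota^{t(a_i)})^{-1}$ and $(\iota^{s(a_i)})^{-1}$ respectively — splices the two traced products into one: the $\iota^{-1}$ factors cancel against the $M_\iota$'s, and the cyclic freedom in each $\operatorname{str}$ lets us reattach $M_\iota M_{A_{ii}} M_{B_{jj}}$. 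This is essentially a "matrix fusion" identity of the form $\operatorname{str}(PMX^aN) \cdot \operatorname{str}(P'M'X^{\bar a}N')$ contracted over the $X$'s equals $\pm\operatorname{str}(\text{one long product})$, which is the heart of the classical quiver trace-bracket correspondence of \cite{BLB, Gin}, here in its signed/shifted form.

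The main obstacle will be the sign bookkeeping: matching the Koszul signs from \eqref{eqn: Bracket defn}, the shift signs \eqref{f,g transformation}, the reversed-composition convention $f\cdot g = (-1)^{fg} g\circ f$ in $\underline{\operatorname{End}} V$, the cyclicity signs of $\operatorname{str}$, the sign in the pairing \eqref{eqn: pairing} and its inverse $t^a$ (Proposition \ref{prop:BVRepexp}), and the $(-1)^{(\alpha+\beta)(1+b)}$ and $\langle a,b\rangle$ prefactors in \eqref{eqn: delta-tilde, explicitely}. My strategy to keep this manageable is to work one contraction $(i,j)$ at a time (the two sides being sums over the same index set of pairs in involution), to use the diagrammatic calculus so that signs are tracked by crossings rather than by hand, and to exploit the fact — already used in the definition of $\Delta_\hbar$ and noted just after Definition \ref{def:necklaceBV} — that every homogeneous element of $\mathscr A$ has degree a multiple of $p$, so many of the shift signs $(-1)^{x(p+1)}$ collapse to $+1$. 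I would reduce the general $p$ computation to the structure of the $p=0$ case plus a controlled number of extra factors of $(-1)^{p+1}$, exactly as in the proof of Proposition \ref{prop: shiftIBL}. A useful intermediate lemma, which I would isolate, is the "fusion identity": for $a \in Q_1$, the element $\sum t^a_{\alpha\gamma}{}^{\beta\delta} (\text{leg in } M_{\bar a}) \otimes (\text{leg in } M_a)$ inserted between two strings of $M$'s yields $M_\iota^{-1}$ times a single cyclic word, i.e. it plays the role of the quadratic Casimir/coevaluation for the pairing $\operatorname{str}(\iota \circ - \circ -)$; once this is stated cleanly, both sides of \eqref{eqn: bracket prf 0} become manifestly equal term-by-term.
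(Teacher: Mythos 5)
Your proposal is correct and follows essentially the same route as the paper: reduce to a term-by-term comparison over pairs $(i,j)$ of arrows in involution, and isolate a ``fusion identity'' showing that contracting the kernel $t^a$ between the two supertraces splices them into a single cyclic word, with the $\iota^{-1}$ legs of $t^a$ cancelling the $M_\iota$'s --- this is precisely the content of the paper's Lemmas \ref{lem: intertwining 1} and \ref{lem: intertwining 2} together with the observation $B_{M_{\iota^{-1}}}(M_\iota \cdot M_{A_{ii}}) = 2M_{A_{ii}}$. Your handling of the subtlety that $\{\varphi(x),\varphi(y)\}_{\widetilde\Delta}$ is the cross-term part of the second-order operator (rather than just $\widetilde\Delta(\varphi(x)\varphi(y))$), and your plan to control signs diagrammatically, also match the paper's treatment.
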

Before proving this, we need another result:
\begin{lem}\label{lem: intertwining 1}
    \textit{Let $a$ and $b$ be arrows in $\overline{Q}_1$. Then
    \begin{equation}\label{eqn: intertwining prop 1}
        \includegraphics[scale=1.2]{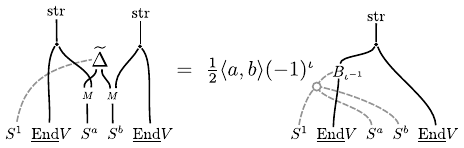}\;\;,
    \end{equation}
    where ``$\bullet$'' denotes multiplication in the relevant algebra (here, totally reversed composition in $\underline{\operatorname{End}}{V}$), and $B_{\iota^{-1}}(f) := \iota^{-1} \circ f + (-1)^{f \iota^{-1} + \iota^{-1}} f \circ \iota^{-1}$.}
\end{lem}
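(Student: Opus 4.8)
The statement of Lemma \ref{lem: intertwining 1} is an identity between two $\widetilde{\mathcal{O}} \otimes \underline{\operatorname{End}}V$-valued expressions built from the matrices $M_a$, $M_b$, $M_\iota$, and the co-pairing $t^a$ (the diagram \eqref{eqn: intertwining prop 1} presumably expresses that contracting $M_a$ and $M_b$ against the two legs of $t^a$, suitably inserted into a matrix product, produces a term involving $\langle a,b\rangle$ and the map $B_{\iota^{-1}}$ applied to identity-type data). The natural strategy is a direct computation in coordinates, using the explicit bases $\{(h^a)^\mu{}_\nu\}$, $\{(X^a)_\mu{}^\nu\}$ introduced in Section \ref{section: BV Algebra on a Representation Variety}, the coordinate formula for $M_a$ (namely $M_a = (-1)^{\mu+\nu}(X^a)_\mu{}^\nu \otimes (h^{s(a)\to t(a)})^\mu{}_\nu$), and the explicit components $t_{\alpha\gamma}^{\beta\delta}$ from \eqref{eqn: t-components generalisation}.

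First I would unpack the left-hand side of \eqref{eqn: intertwining prop 1} into coordinates: the contraction of a copy of $t^a$ with $M_a$ and $M_b$ will pair $(X^{\overline a})$ or $(X^a)$ coordinates against the dual coordinates $(X^b)$, producing a Kronecker-type $\delta^{ab}$ (exactly as in the computation of $\widetilde\Delta$ leading to \eqref{eqn: delta-tilde, explicitely}), together with two $\iota^{-1}$ factors coming from the two terms of $t_{\alpha\gamma}^{\beta\delta}$. The matrix-multiplication structure in $\underline{\operatorname{End}}V$ then glues the surviving $(h^{i\to j})$ basis elements using the product rule $(h^{i\to j})^\mu{}_\nu \cdot (h^{k\to l})^\rho{}_\sigma = (-1)^{(\mu+\nu)(\rho+\sigma)}\delta^{jk}\delta_\nu^\rho (h^{i\to l})^\mu{}_\sigma$ (recorded in the proof of Proposition \ref{prop: commutativity}). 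Carrying out this contraction, the two terms of $t$ should reassemble into $\iota^{-1}\circ(-) + (\pm)(-)\circ\iota^{-1}$, i.e. precisely $B_{\iota^{-1}}$ applied to the appropriate argument, explaining the strange sign in its definition. The factor $\langle a,b\rangle$ appears because, after the $\delta^{ab}$ from the $Q_1$-case, one must also account for the $\overline a$ versus $a$ labelling and the degree-$p$ shift $S^{-p}$ attached to doubled arrows, which is exactly what the indicator $\langle-,-\rangle$ bookkeeps (compare the definition of the bracket \eqref{eqn: Bracket defn} and the coordinate form of $\widetilde\Delta$ in Proposition \ref{prop:BVRepexp}).

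The main obstacle will be, as usual in this paper, the sign bookkeeping: there are Koszul signs from the $M_a = (-1)^{\mu+\nu}(X^a)\otimes(h)$ normalisation, from reversed composition $f\cdot g = (-1)^{fg}g\circ f$ in $\underline{\operatorname{End}}V$, from the parities $|\iota| = -p-1$ and $|h^{i\to j}{}^\mu{}_\nu| = \nu-\mu$, from the crossings in the diagram \eqref{eqn: intertwining prop 1}, and from the two $\iota^{-1}$ factors with their own degrees. The cleanest way to manage this is to track everything diagrammatically (as the authors do elsewhere), reducing \eqref{eqn: intertwining prop 1} to the "snake" identity \eqref{eqn: snake} for $t^a$ composed with the commutation property of $M_\iota$ with $M_a$, $M_b$ from Proposition \ref{prop: commutativity} and Lemma \ref{lem: B_iota image} (which says $B_\iota(\mathrm{H}^a) = \mathrm{H}^a_\iota$ and $B_{\iota^{-1}}$ is, up to scalar, its inverse on $\mathrm{H}^a_\iota$). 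Indeed, I expect the proof to go: rewrite the LHS using \eqref{eqn: snake} to cancel one pairing against $t^a$, observe that the remaining insertion of $M_a$ or $M_b$ against a leg of $t^a$ lands in $\mathrm{H}^a_\iota$ by Lemma \ref{lem: B_iota image}, and then identify the resulting expression with $B_{\iota^{-1}}$ of the relevant identity matrix block, picking up $\langle a,b\rangle$ from matching the arrow labels. I would present this as a short diagrammatic manipulation with the coordinate computation relegated to a remark or omitted as "a straightforward but tedious check", mirroring the style of the surrounding propositions.
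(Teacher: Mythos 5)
Your primary strategy --- a direct coordinate computation feeding a homogeneous input through the left-hand side, using $M_a=(-1)^{\mu+\nu}(X^a)_\mu{}^\nu\otimes(h^{s(a)\to t(a)})^\mu{}_\nu$, the coordinate form \eqref{eqn: delta-tilde, explicitely} of $\widetilde\Delta$, and the explicit components \eqref{eqn: t-components generalisation} of $t$, then reassembling the two terms of $t$ into $B_{\iota^{-1}}$ via cyclicity of $\operatorname{str}$ while tracking Koszul signs --- is exactly what the paper does. One caution about the ``cleaner'' shortcut you sketch at the end: it does not work as stated, because for $f\in\mathrm{H}^a_\iota$ one computes $B_{\iota^{-1}}(f)=\bigl(1+(-1)^{\iota}\bigr)\,\iota^{-1}\circ f$, which vanishes identically when $p=0$, so $B_{\iota^{-1}}$ is \emph{not} an inverse of $B_\iota$ on $\mathrm{H}^a_\iota$ up to scalar (the section of $B_\iota$ used in Lemma \ref{lem: B_iota image} is $\tfrac12\iota^{-1}\circ(-)$, and here $B_{\iota^{-1}}$ is applied to an arbitrary $\phi\in\mathrm{H}^a$, not an intertwiner); the paper simply carries out the coordinate computation.
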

\begin{proof}
    The left hand side of \eqref{eqn: intertwining prop 1} sends (homogeneous) $s^1 \otimes \phi \otimes s^a \otimes s^b \otimes \psi$ to
    \begin{equation} \label{eqn: 001}
        (-1)^{\varepsilon + \mu + \nu + \rho + \sigma} 
        \operatorname{str}(\phi \cdot \tensor{(h^{s(a) \to t(a)})}{^{\mu}_{\nu}}) \;
        \widetilde{\Delta}(s^1 \otimes \tensor{(X^a)}{_{\mu}^{\nu}} \cdot \tensor{(X^b)}{_{\rho}^{\sigma}}) \;
        \operatorname{str}(\tensor{(h^{s(b) \to t(b)})}{^{\rho}_{\sigma}} \cdot \psi),
    \end{equation}
    where $\varepsilon$ is the Koszul sign resulting from overlaps in the diagram. Explicitly, $$\varepsilon = \phi + (\mu + \nu) + (\mu + \nu)(\mu + \nu + a) = \phi + a(\mu + \nu) \mod 2.$$
    Writing $\phi = \tensor{\phi}{_{\alpha}^{\beta}} \tensor{h}{^{\alpha}_{\beta}}$, we find that $\operatorname{str}(\phi \cdot \tensor{(h^{s(a) \to t(a)})}{^{\mu}_{\nu}}) = (-1)^{\mu} \tensor{\phi}{_{\nu}^{\mu}}$. Similarly, we find $\operatorname{str}(\tensor{(h^{s(b) \to t(b)})}{^{\rho}_{\sigma}} \cdot \psi) = (-1)^{\sigma}\tensor{\psi}{_{\sigma}^{\rho}}$. With this, \eqref{eqn: 001} becomes
    \begin{equation*}
        (-1)^{\phi + \nu + \rho + a(\mu + \nu)}\tensor{\phi}{_{\nu}^{\mu}}\tensor{\psi}{_{\sigma}^{\rho}} \langle a,b \rangle (-1)^{(\mu + \nu)(1 + b)}t_{\mu\rho}^{\nu\sigma}
        =
        (-1)^{\mu + \rho + (\mu + \nu) \iota} \langle a,b \rangle \tensor{\phi}{_{\nu}^{\mu}}\tensor{\psi}{_{\sigma}^{\rho}} t_{\mu\rho}^{\nu\sigma},
    \end{equation*}
    where we used that $|a| + |b| = p$ (or $\langle a,b \rangle = 0$). Using \eqref{eqn: t-components generalisation}, this is found to be
    \begin{align*}
        &\frac{1}{2} \langle a,b \rangle
        \left[
        (-1)^{\rho + (\mu + \nu) \iota}
        \tensor{\phi}{_{\nu}^{\mu}}\tensor{\psi}{_{\sigma}^{\rho}}
        \tensor{(\iota^{-1})}{_{\mu}^{\sigma}}\delta_{\rho}^{\nu}
        +
        (-1)^{\rho + \iota}
        \tensor{\phi}{_{\nu}^{\mu}}\tensor{\psi}{_{\sigma}^{\rho}}
        \delta_{\mu}^{\sigma}\tensor{(\iota^{-1})}{_{\rho}^{\nu}}
        \right]\\
        &= \frac{1}{2}\langle a,b \rangle
        \left[
        (-1)^{\phi \iota}\operatorname{str}(\psi \circ \iota^{-1} \circ \phi)
        + (-1)^{\iota}\operatorname{str}(\psi \circ \phi \circ \iota^{-1})
        \right]\\
        &=
        \frac{1}{2}\langle a,b \rangle
        (-1)^{\phi p + \iota} \operatorname{str}\left((\iota^{-1} \circ \phi + (-1)^{\phi \iota + \iota} \phi \circ \iota^{-1}) \circ \psi\right)\\
        &=
        \frac{1}{2}\langle a,b \rangle (-1)^{\phi p + \iota} 
        \operatorname{str}(B_{\iota^{-1}}(\phi) \cdot \psi),
    \end{align*}
    the right hand side of \eqref{eqn: 001}. Note that it helps to use ``$\operatorname{str}(f) \neq 0 \implies |f| = 0 \mod{2}$'' in the steps above.
\end{proof}
\begin{lem}\label{lem: intertwining 2}
    \textit{Define $B_{M_{\iota}}\colon \widetilde{\mathcal{O}} \otimes \underline{\operatorname{End}}{V} \to \widetilde{\mathcal{O}} \otimes \underline{\operatorname{End}}{V}$ by
    $B_{M_{\iota}}(\Phi) = M_{\iota} \cdot \Phi + (-1)^{\iota \Phi + \iota} \Phi \cdot M_{\iota}$. Then
    \begin{equation*}
        \includegraphics[scale=1.2]{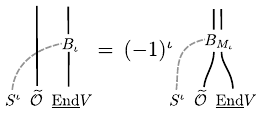}\;\;.
    \end{equation*}}
\end{lem}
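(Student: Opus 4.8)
The plan is to read Lemma~\ref{lem: intertwining 2} as the matrix-valued refinement of Lemma~\ref{lem: intertwining 1}: in the asserted diagram identity both sides are elements of $\widetilde{\mathcal{O}}\otimes\underline{\operatorname{End}}{V}$ built from two auxiliary matrices $\Phi,\Psi\in\widetilde{\mathcal{O}}\otimes\underline{\operatorname{End}}{V}$, the arrow matrices $M_a,M_b$, and a single application of $\widetilde{\Delta}$ to the coordinate functions occurring inside $M_a$ and $M_b$. The tensor $t_{\alpha\gamma}^{\beta\delta}$ of \eqref{eqn: t-components generalisation} has two summands, each a product of a Kronecker delta with a matrix entry of $\iota^{-1}$. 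When $\widetilde{\Delta}$ acts and the free $\underline{\operatorname{End}}{V}$-indices of $M_a$ get contracted with those of $M_b$ through these deltas, the matrices $\Phi$ and $\Psi$ are concatenated with a single copy of $\iota^{-1}$ inserted — on the left in one summand, on the right in the other. Since $\iota^2=\lambda\,\mathrm{id}$ gives $\iota^{-1}=\lambda^{-1}\iota$, so that $B_{\iota^{-1}}=\lambda^{-1}B_\iota$ and, after identifying a matrix with an endomorphism, $B_{M_\iota}=(-1)^{|\iota|}B_\iota$ in the totally-reversed-composition convention of $\underline{\operatorname{End}}{V}$, recognising these two summands as the two terms defining $B_{M_\iota}$ is exactly the content of the lemma.

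Concretely I would first expand $M_a=(-1)^{\mu+\nu}\tensor{(X^a)}{_\mu^\nu}\otimes\tensor{(h^{s(a)\to t(a)})}{^\mu_\nu}$ and likewise $M_b$, apply $\widetilde{\Delta}$ to the product $\tensor{(X^a)}{_\mu^\nu}\tensor{(X^b)}{_\rho^\sigma}$ using Proposition~\ref{prop:BVRepexp} (equivalently \eqref{eqn: delta-tilde, explicitely} together with \eqref{eqn: t-components generalisation}), and then contract the $\underline{\operatorname{End}}{V}$-parts $\tensor{(h^{s(a)\to t(a)})}{^\mu_\nu}$ and $\tensor{(h^{s(b)\to t(b)})}{^\rho_\sigma}$ against $\Phi$ and $\Psi$. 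This is precisely the computation in the proof of Lemma~\ref{lem: intertwining 1}, but with the two outer supertraces replaced by free matrix slots, so it produces $\tfrac12\langle a,b\rangle(-1)^{\varepsilon}\,B_{M_\iota}(\Psi\cdot\Phi)$ up to the exact placement of $B_{M_\iota}$ dictated by the diagram. Alternatively — and this is the most economical way to actually write it out — one composes both sides of the claimed identity with $(1\otimes\operatorname{str})(-\cdot\,\Xi)$ for arbitrary $\Xi\in\underline{\operatorname{End}}{V}$: cyclicity of $\operatorname{str}$ and the $\widetilde{\mathcal{O}}$-graded-commutativity let one move $\Xi$ next to $\Phi$, turning the scalar identity into exactly Lemma~\ref{lem: intertwining 1} (with its $\phi$ now $\widetilde{\mathcal{O}}$-valued, which is harmless by $\widetilde{\mathcal{O}}$-linearity), and non-degeneracy of the ordinary supertrace pairing on $\underline{\operatorname{End}}{V}$ (as used in the proof of Proposition~\ref{prop: non-degeneracy}) then upgrades this back to the matrix identity.

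The only genuine work is sign bookkeeping. One has to keep straight the two composition conventions — the $\circ$ used to define $B_\iota$ and $B_{\iota^{-1}}$ versus the totally reversed composition ``$\cdot$'' in $\underline{\operatorname{End}}{V}$ — track the Koszul signs coming from the dashed $S^1$, $S^a$, $S^b$, $S^{-p}$ lines crossing $\Phi$ and $\Psi$ and from the graded-commutativity of $\widetilde{\mathcal{O}}$ used in extending $\phi,\psi$ to be matrix-valued, and use repeatedly that $\operatorname{str}(f)\neq 0$ forces $|f|$ even and that $\langle a,b\rangle\neq 0$ forces $b=\overline{a}$ and $|a|+|b|=p$ in order to collapse several of the exponents. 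I expect this sign matching to be the main obstacle, exactly as it was in Lemma~\ref{lem: intertwining 1}; once the diagram is unwound there is no conceptual difficulty.
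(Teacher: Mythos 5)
You have misread what this lemma asserts. The diagram in Lemma \ref{lem: intertwining 2} contains no $\widetilde{\Delta}$, no $M_a$ or $M_b$, and no $t$-tensor: it is simply the operator identity $B_{M_{\iota}} = (-1)^{|\iota|}\,(1 \otimes B_{\iota})$ on $\widetilde{\mathcal{O}} \otimes \underline{\operatorname{End}}{V}$, i.e.\ the statement that the ``matrix'' operator $B_{M_\iota}$, defined via the totally-reversed-composition product, agrees up to the sign $(-1)^{|\iota|}$ with applying $B_\iota$ on the $\underline{\operatorname{End}}{V}$ factor alone. The paper's proof is a three-line computation: evaluate $B_{M_\iota}$ on an elementary tensor $f \otimes \phi$, unwind the product $(1\otimes\iota)\cdot(f\otimes\phi)$ and $(f\otimes\phi)\cdot(1\otimes\iota)$ using $g \cdot h = (-1)^{gh} h \circ g$ together with the Koszul sign from moving $\iota$ past $f$, and recognise the result as $(-1)^{\iota f + \iota} f \otimes B_\iota(\phi)$. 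The matrix-valued refinement of Lemma \ref{lem: intertwining 1} that you set out to prove is the \emph{combined} identity \eqref{eqn: intertwining 3}/\eqref{eqn: intertwining 4}, which the paper deduces afterwards by putting Lemmas \ref{lem: intertwining 1} and \ref{lem: intertwining 2} together; your proposed strategy (compose with $(1\otimes\operatorname{str})(-\cdot\,\Xi)$, use cyclicity and non-degeneracy of the supertrace to reduce to Lemma \ref{lem: intertwining 1}) is a reasonable route to \emph{that} statement, but it is not a proof of the present lemma.

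The one place where your proposal touches the actual content of the lemma is the parenthetical claim that ``after identifying a matrix with an endomorphism, $B_{M_\iota}=(-1)^{|\iota|}B_\iota$ in the totally-reversed-composition convention'' --- that \emph{is} the lemma, and you assert it without justification. The gap is therefore that the only thing requiring proof is dismissed as bookkeeping, while the proposed work is spent on a different (downstream) statement. The repair is short: for homogeneous $f\otimes\phi$ one checks
\begin{equation*}
  M_\iota\cdot(f\otimes\phi) + (-1)^{\iota(f+\phi)+\iota}(f\otimes\phi)\cdot M_\iota
  = (-1)^{\iota f+\iota}\, f\otimes\bigl(\iota\circ\phi + (-1)^{\iota\phi+\iota}\phi\circ\iota\bigr)
  = (-1)^{\iota}\,(1\otimes B_\iota)(f\otimes\phi),
\end{equation*}
where the nonstandard sign $(-1)^{\iota\phi+\iota}$ built into the definition of $B_\iota$ is exactly what makes the two Koszul factors from the reversed composition recombine. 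No supertrace, non-degeneracy, or $\widetilde{\Delta}$ is needed.
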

\begin{proof}
    The right hand side sends (homogeneous) $f \otimes \phi$ to
    \begin{align*}
        B_{M_{\iota}}(f \otimes \phi)
        &=
        (1 \otimes \iota) \cdot (f \otimes \phi) + (-1)^{\iota(f + \phi) + \iota}(f \otimes \phi) \cdot (1 \otimes \iota)\\
        &= (-1)^{\iota f + \iota}f \otimes (\iota \circ \phi + (-1)^{\iota \phi + \iota} \phi \circ \iota) 
        = (-1)^{\iota f + \iota}f \otimes B_{\iota}(\phi)\\
        &= (-1)^{\iota} (1 \otimes B_{\iota}) (f \otimes \phi),
    \end{align*}
    which is precisely what the left hand side sends $f \otimes \phi$ to.
\end{proof}
Combining Lemmas \ref{lem: intertwining 1} and \ref{lem: intertwining 2} (and using the obvious variant with $\iota^{-1}$ instead of $\iota$), we have (for any arrows $a,b \in \overline{Q}_1$)
\begin{equation}\label{eqn: intertwining 3}
    \includegraphics[scale=1.2]{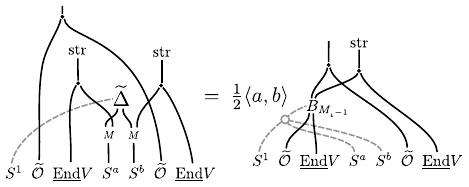}\;\;.
\end{equation}
Introducing the notation
\begin{equation*}
    \includegraphics[scale=1.2]{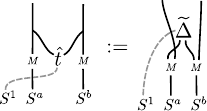}\;\;,
\end{equation*}
we can write \eqref{eqn: intertwining 3} elegantly as
\begin{equation}\label{eqn: intertwining 4}
    \includegraphics[scale=1.2]{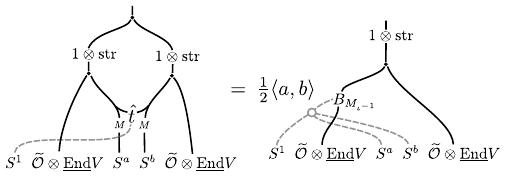}\;\;.
\end{equation}
\begin{proof}[Proof of Proposition \ref{prop: gerstenhaber}]
It suffices to show that \eqref{eqn: bracket prf 0} holds for all $x,y$ of the forms $s^{\iota} \otimes P_1$ and $s^{\iota} \otimes P_2$ where $P_1$ and $P_2$ are paths in $\overline{Q}$ (thought of in $\mathscr{A}$).
If $P_1$ or $P_2$ is a constant path, then it is clear that both sides of \eqref{eqn: bracket prf 0} vanish. It therefore remains to prove \eqref{eqn: bracket prf 0} on non-constant paths.

Let $A$ and $B$ be non-constant paths. We want to show that 
\begin{equation}\label{eqn: br intertwining identity}
    \varphi\left(\widetilde{\operatorname{br}}(s^1 \otimes (s^{\iota} \otimes A_k) \cdot (s^\iota \otimes B_l))\right)
    = \{ s^1 \otimes \varphi(s^{\iota} \otimes A_k) \otimes \varphi(s^\iota \otimes B_l)\}_{\widetilde{\Delta}},
\end{equation}
where we have now explicitely included the degree-correcting $s$ factors. 

The $ij$'th term of $\widetilde{\operatorname{br}}(s^1 \otimes (s^{\iota} \otimes  A_k) \cdot (s^{\iota} \otimes B_l))$ can be written diagrammatically as
\begin{equation*}
    \includegraphics[scale=1.2]{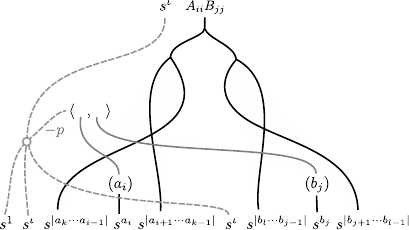}\;\;,
\end{equation*}
where we let $s^{|a_k \cdots a_{i-1}|} = s^{e_{t(a_{k-1})}}$ if $i = k \mod \text{``the length of $A$''}$, for example.
Precomposing with the permutation
\begin{equation}\label{eqn: permutation 1}
    \includegraphics[scale=1.2]{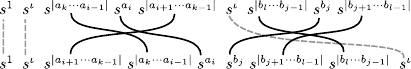}\;\;,
\end{equation}
this becomes
\begin{equation}\label{eqn: ijth term (permuted)}
    \includegraphics[scale=1.2]{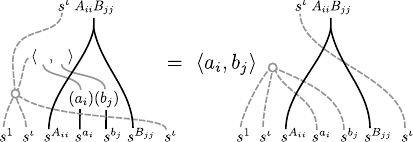}\;\;,
\end{equation}
where to see the last equality it helps to recall that the degree of $\iota$ has opposite parity to the degree of any element of $\mathscr{A}$. Applying $\varphi$ to \eqref{eqn: ijth term (permuted)} yields 
\begin{equation}\label{eqn: ijth term (permuted + trace applied)}
    \includegraphics[scale=1.2]{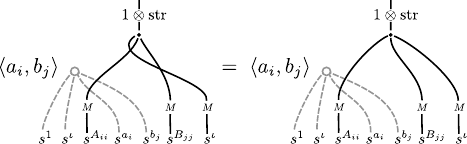}\;\;,
\end{equation}
where we have used cyclicity of $1 \otimes \operatorname{str}$. By associativity of composition and linearity of $1 \otimes \operatorname{str}$, \eqref{eqn: ijth term (permuted + trace applied)} is nothing but the $ij$'th term of $\varphi(\widetilde{\operatorname{br}}(s^1 \otimes (s^{\iota} \otimes  A_k) \cdot (s^{\iota} \otimes B_l)))$ precomposed by the permutation \eqref{eqn: permutation 1}.

Consider now the $ij$'th term of the right hand side of \eqref{eqn: br intertwining identity}, which is given by
\begin{equation} \label{eqn: ijth term num.2}
    \includegraphics[scale=1.2]{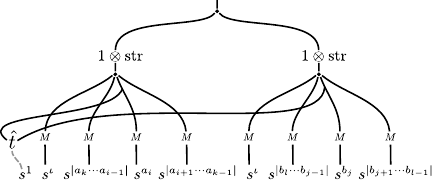}\;\;.
\end{equation}
Note that we could move the action of $\hat{t}$ (which acts on $\widetilde{\mathcal{O}}$) past $1 \otimes \operatorname{str}$ (which acts on $\underline{\operatorname{End}}{V}$). Now precompose \eqref{eqn: ijth term num.2} with the permutation \eqref{eqn: permutation 1} to get
\begin{equation*}
    \includegraphics[scale=1.2]{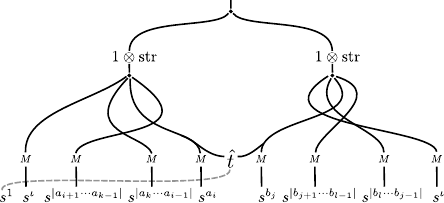}\;\;.
\end{equation*}
Using cyclicity of $1 \otimes \operatorname{str}$ and Proposition \ref{prop: commutativity}, this is nothing but
\begin{equation} \label{eqn: intertwining 5}
    \includegraphics[scale=1.2]{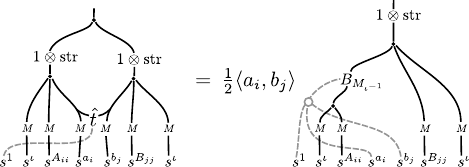}\;\;,
\end{equation}
where the equality follows from identity \eqref{eqn: intertwining 4}. Now notice that
\begin{equation*}
    B_{M_{\iota^{-1}}}(M_{\iota} \cdot M_{A_{ii}})
    =
    M_{\iota^{-1}} \cdot M_{\iota} \cdot M_{A_{ii}} + (-1)^{\iota(\iota + A_{ii}) + \iota} M_{\iota} \cdot M_{A_{ii}} \cdot M_{\iota^{-1}} = 2 M_{A_{ii}},
\end{equation*}
so that \eqref{eqn: intertwining 5} is equal to
\begin{equation*}
    \includegraphics[scale=1.2]{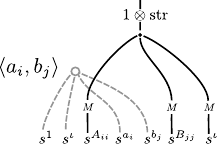}.
\end{equation*}
Hence, the $ij$'th term of the right hand side of \eqref{eqn: br intertwining identity} precomposed by the permutation \eqref{eqn: permutation 1} is precisely the $ij$'th term of the left hand side of \eqref{eqn: br intertwining identity} precomposed by the permutation \eqref{eqn: permutation 1}. In other words, we have have shown that \eqref{eqn: br intertwining identity} holds, completing the proof.
\end{proof}

Let us now turn to the compatibility of the cobracket and the BV operator. Recall that we had two nonzero complex constants: $\hbar$ in the definition of necklace BV operator $\Delta_\hbar$ (Definition \ref{def:necklaceBV}), and $\lambda$ given by $\iota_i^2 = \lambda \cdot \id$. These need to be related in order to match the rescaled necklace cobracket $\hbar \widetilde{\delta}$ and the BV operator $\widetilde{\Delta}$ on the representation variety. Moreover, in the $p=1$ case, we need to assume that $\iota$ is proportional to the identity morphism.
\begin{prop}\label{prop: intertwining with (little) delta}
    \textit{
    Assume,
    \begin{itemize}
        \item if $p=0$, that $\lambda  = \frac{1}{2\hbar}$;
        \item if $p=1$, that $\iota = (\hbar)^{-\frac 12} \cdot \id$ (which implies $\lambda = \frac 1 \hbar$).
    \end{itemize}
    Then, for all $x \in \mathscr{A}[p+1]$,
    \begin{equation}\label{eqn: intertwining with (little) delta identity}
        \varphi(\hbar \widetilde{\delta}(x)) = \widetilde{\Delta}\left(\varphi(x)\right).
    \end{equation}}
\end{prop}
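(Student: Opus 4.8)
The plan is to reduce to a single cyclic word and then expand both sides of \eqref{eqn: intertwining with (little) delta identity} as sums over pairs of arrows in involution, matched term by term. By linearity it suffices to treat $x = s^{\iota}\otimes A_k$ for a closed path $A = a_1\cdots a_n$ in $\overline{Q}$; if $A$ is constant or $n\le 1$ both sides vanish (for $n=1$, $\widetilde{\delta}(x)=0$ while $\varphi(x)$ is a linear coordinate, on which $\widetilde{\Delta}$ vanishes), so assume $n\ge 2$. On $\Sym^1$ the operator $\widetilde{\delta}$ is just the shifted cobracket $\delta^{[p+1]}$, and since $\varphi$ is an algebra morphism, \eqref{eqn: delta usual-notation expression} gives
\[
 \varphi\big(\hbar\,\widetilde{\delta}(s^{\iota}\otimes A_k)\big)
 = \frac{\hbar}{2}\sum_{i,j}^{A}(-1)^{\varepsilon}\,\langle a_i,a_j\rangle\;\varphi(s^{\iota}\otimes A_{ij})\,\varphi(s^{\iota}\otimes A_{ji}),
\]
with $\varepsilon$ the appropriate Koszul sign; the summand vanishes unless $a_j=\overline{a_i}$.

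For the other side, write $\varphi(x)=(1\otimes\operatorname{str})(M_{\iota} M_{a_1}\cdots M_{a_n})$, a polynomial of degree $n$ in the coordinates $(X^{a})$. Since $\widetilde{\Delta}$ is a second-order differential operator it acts by contracting pairs of these coordinates, and by Proposition \ref{prop:BVRepexp} the contraction of $(X^{a_i})$ with $(X^{a_j})$ vanishes unless $a_j=\overline{a_i}$, so the two sums run over the same index set. Using the diagrammatic identity \eqref{eqn: intertwining 4} (equivalently Lemma \ref{lem: intertwining 1}), the cyclicity of $1\otimes\operatorname{str}$, and the centrality of $M_{\iota}$ (Proposition \ref{prop: commutativity}), I would rewrite the $(i,j)$-term of $\widetilde{\Delta}(\varphi(x))$ as follows: the contraction cuts the necklace at $a_i,a_j$ into the two sub-necklaces $M_{A_{ij}}$ (not containing $M_{\iota}$) and $M_{\iota} M_{A_{ji}}$ (containing it), and the two summands of $\widetilde{\Delta}(X^{a_i}X^{a_j})$ in Proposition \ref{prop:BVRepexp} correspond to closing these two loops with an extra factor $M_{\iota^{-1}}$ inserted into one loop or the other (in the $M_{\iota}$-loop the $M_{\iota^{-1}}$ cancels $M_{\iota}$ by centrality). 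The net result is
\[
 \widetilde{\Delta}(\varphi(x)) = \frac12\sum_{i,j}^{A}(-1)^{\varepsilon'}\langle a_i,a_j\rangle\Big[(1\otimes\operatorname{str})(M_{A_{ij}})\,(1\otimes\operatorname{str})(M_{A_{ji}}) + (1\otimes\operatorname{str})(M_{\iota^{-1}}M_{A_{ij}})\;\varphi(s^{\iota}\otimes A_{ji})\Big],
\]
where $(1\otimes\operatorname{str})(M_B)$ denotes the ``$\iota$-free'' trace of the word $B$.

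It then remains to evaluate the bracket in each case. If $p=0$, the morphism $M_{\iota}$ is odd with $M_{\iota}^2=\lambda\cdot 1$, and for any word $B$ cyclicity of $1\otimes\operatorname{str}$ (with the odd $M_{\iota}$) together with centrality give $(1\otimes\operatorname{str})(M_B)=\lambda^{-1}(1\otimes\operatorname{str})(M_{\iota}^2 M_B)=-\lambda^{-1}(1\otimes\operatorname{str})(M_{\iota} M_B M_{\iota})=-(1\otimes\operatorname{str})(M_B)$, so the $\iota$-free trace vanishes on $\widetilde{\mathcal{R}}$ and the first summand drops out; the second, using $\iota^{-1}=\lambda^{-1}\iota$ and the hypothesis $\lambda=\frac1{2\hbar}$, equals $\hbar\,\varphi(s^{\iota}\otimes A_{ij})\varphi(s^{\iota}\otimes A_{ji})$. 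If $p=1$, the hypothesis $\iota=\hbar^{-1/2}\,\id$ makes $M_{\iota}=\hbar^{-1/2}\cdot 1$ scalar, so $(1\otimes\operatorname{str})(M_B)=\hbar^{1/2}\varphi(s^{\iota}\otimes B)$ and $M_{\iota^{-1}}=\hbar^{1/2}\cdot 1$; now both summands survive and each equals $\hbar\,\varphi(s^{\iota}\otimes A_{ij})\varphi(s^{\iota}\otimes A_{ji})$, so together with the $\frac12$ they again give $\hbar\,\varphi(s^{\iota}\otimes A_{ij})\varphi(s^{\iota}\otimes A_{ji})$. In both cases $\widetilde{\Delta}(\varphi(x))$ agrees term-by-term with the first display, up to the sign comparison.

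The main obstacle is the sign bookkeeping: one must verify that the Koszul sign $\varepsilon'$ coming from Proposition \ref{prop:BVRepexp}, the reorderings inside $1\otimes\operatorname{str}$ and the centrality moves together reproduce the sign $\varepsilon$ of $\delta^{[p+1]}$ and the $\varepsilon_{ki}^{A}$-conventions of \eqref{eqn: delta usual-notation expression}, and that the identification of the index set of $\widetilde{\Delta}$'s sum with that of $\widetilde{\delta}$'s sum (pairs $i,j$ with $a_j=\overline{a_i}$) is sign-compatible, with the $\frac12$'s matching. I would also flag, as a conceptual point, that it is exactly the vanishing of $\iota$-free traces (for $p=0$) or the scalar nature of $\iota$ (for $p=1$) that forces the hypotheses $\lambda=\frac1{2\hbar}$, resp.\ $\iota=\hbar^{-1/2}\id$, so these normalizations are not arbitrary.
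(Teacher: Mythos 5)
Your proposal follows essentially the same route as the paper: expand both sides over pairs of arrows in involution, use a trace-splitting identity for the second-order contraction together with cyclicity of $1\otimes\operatorname{str}$ and the centrality of $M_{\iota}$, and close the two cases via the vanishing of $\iota$-free traces ($p=0$) resp.\ the scalarity of $\iota$ ($p=1$), exactly as in the paper's case analysis. One correction: the identity needed to cut a single trace into a product of two traces (with $M_{\iota^{-1}}$ inserted into one loop or the other) is Lemma \ref{lem: the WORM}, not Lemma \ref{lem: intertwining 1}/\eqref{eqn: intertwining 4} --- the latter merges two traces into one and is the bracket-side tool of Proposition \ref{prop: gerstenhaber} --- though your verbal description of the mechanism is the correct one. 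The sign bookkeeping you defer is where most of the paper's labour actually sits (it is handled by precomposing both sides with the same explicit permutation \eqref{eqn: permutation 2} and by the Koszul-sign computation inside Lemma \ref{lem: the WORM}), but deferring it does not change the architecture of the argument.
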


Once again, we need another result first:

\begin{lem}\label{lem: the WORM}
    \textit{Let $a$ and $b$ be arrows in $\overline{Q}_1$. Then
    \begin{equation}\label{eqn: cobracket prf 1}
        \includegraphics[scale=1.2]{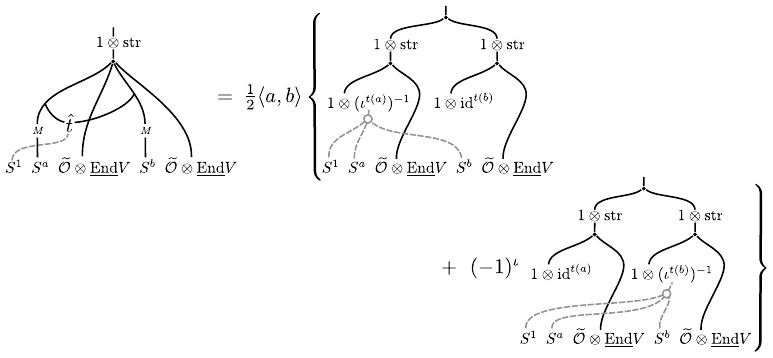}\;\;.
    \end{equation}}
\end{lem}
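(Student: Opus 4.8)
The plan is to verify the identity \eqref{eqn: cobracket prf 1} directly in coordinates, in the spirit of Lemmas \ref{lem: intertwining 1} and \ref{lem: intertwining 2}. First I would unwind its left-hand side by feeding in the coordinate expansions $M_a = (-1)^{\mu+\nu}\tensor{(X^a)}{_{\mu}^{\nu}}\otimes \tensor{(h^{s(a)\to t(a)})}{^{\mu}_{\nu}}$ and $M_b = (-1)^{\rho+\sigma}\tensor{(X^b)}{_{\rho}^{\sigma}}\otimes \tensor{(h^{s(b)\to t(b)})}{^{\rho}_{\sigma}}$ and then applying $\widetilde{\Delta}$. Since $\widetilde{\Delta}$ is a second-order differential operator, only its mixed term survives here, and on the pair of generators $\tensor{(X^a)}{_{\mu}^{\nu}},\tensor{(X^b)}{_{\rho}^{\sigma}}$ it is given by Proposition \ref{prop:BVRepexp} (equivalently \eqref{eqn: delta-tilde, explicitely}): it equals $\langle a,b\rangle$ times a Koszul sign times the tensor $t_{\mu\rho}^{\nu\sigma}$ of \eqref{eqn: t-components generalisation}. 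In particular both sides of \eqref{eqn: cobracket prf 1} vanish unless $b=\overline a$, which is the only case to analyse.

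Then I would substitute the explicit two-term expression
\[
    t_{\mu\rho}^{\nu\sigma}=\tfrac12(-1)^{\mu}\Big[\tensor{(\iota^{-1})}{_{\mu}^{\sigma}}\,\delta_\rho^\nu+(-1)^{\iota\cdot(\mu+\nu+1)}\,\delta_\mu^\sigma\,\tensor{(\iota^{-1})}{_{\rho}^{\nu}}\Big].
\]
Each Kronecker-delta factor here splices a pair of $\underline{\operatorname{End}}V$-indices: contracting the matrix units carried by $M_a$ and $M_{\overline a}$ against the $\delta$'s of $t$ deletes these two $M$-blocks and reconnects the remaining strands — cutting the single ``worm'' into two loops — while the surviving factor $\iota^{-1}$ is the coordinate avatar of an insertion of $M_{\iota^{-1}}$. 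The two terms of $t$ give the two ways of placing this leftover $M_{\iota^{-1}}$ relative to the two loops, and together they reassemble into the right-hand side of \eqref{eqn: cobracket prf 1}, with the overall prefactor $\tfrac12\langle a,b\rangle$ surviving (it is cancelled only later, in the proof of Proposition \ref{prop: intertwining with (little) delta}, against the $2$ coming from $B_{M_{\iota^{-1}}}(M_\iota\cdot(-))=2(-)$, as in the proof of Proposition \ref{prop: gerstenhaber}, and against the normalisations relating $\hbar$ and $\lambda$). Alternatively — and perhaps more cleanly — one may observe that this local $\widetilde{\Delta}$-contraction of an $X^a$-block against an $X^{\overline a}$-block is precisely the move already packaged in \eqref{eqn: intertwining 4}, now performed internally to a single $\operatorname{str}$-line rather than between two; closing the two free $\underline{\operatorname{End}}V$-strands of \eqref{eqn: intertwining 4} into a single loop, and using cyclicity of $1\otimes\operatorname{str}$ together with Proposition \ref{prop: commutativity} to slide $M_\iota$ into position (exactly as in the proof of Proposition \ref{prop: gerstenhaber}), converts \eqref{eqn: intertwining 4} into \eqref{eqn: cobracket prf 1}.

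I expect the \emph{main obstacle} to be the Koszul-sign bookkeeping. The string diagram underlying \eqref{eqn: cobracket prf 1} has several crossings of the auxiliary degree-shift lines $S^{\pm1},S^{\pm p}$ and of the $\underline{\operatorname{End}}V$-valued strands, and one must check that the signs produced there — combined with the $(-1)^{\mu}$ and $(-1)^{\iota\cdot(\mu+\nu+1)}$ inside $t$, the $(-1)^{\mu+\nu},(-1)^{\rho+\sigma}$ from the $M$-blocks, and the Koszul sign from \eqref{eqn: delta-tilde, explicitely} — collapse to exactly the sign displayed on the right of \eqref{eqn: cobracket prf 1}. As in the earlier lemmas, the facts that $\operatorname{str}(f)\neq 0$ forces $|f|\equiv 0$, and that $|a|+|b|=p$ whenever $\langle a,b\rangle\neq 0$, will be used repeatedly to simplify these signs; the structural content of the lemma — cutting a worm into two loops with an $\iota^{-1}$ insertion — is forced by the shape of $t$, so that only the signs require genuine care.
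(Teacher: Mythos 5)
Your primary route---expanding $M_a$ and $M_b$ in the matrix-unit coordinates, applying $\widetilde{\Delta}$ via \eqref{eqn: delta-tilde, explicitely}, substituting the two-term formula \eqref{eqn: t-components generalisation} for $t$, and reading off the two Kronecker-delta contractions as the cut of the single trace into two traces with an $\iota^{-1}$ insertion---is exactly the paper's proof, including the observation that the real work is the Koszul-sign bookkeeping (simplified using $\operatorname{str}(f)\neq 0\Rightarrow |f|=0$ and $|a|+|b|=p$ whenever $\langle a,b\rangle \neq 0$). The one caveat concerns your ``cleaner'' alternative: \eqref{eqn: intertwining 4} and \eqref{eqn: cobracket prf 1} contract the same local tensor $t$ against \emph{differently wired} external $\underline{\operatorname{End}}{V}$-strands (two loops joined into one, versus one loop cut into two), so the latter is not obtained from the former by closing strands; both identities must be checked separately against \eqref{eqn: t-components generalisation}, which is what the paper does.
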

\begin{proof}
    Given the definition of $\hat{t}$ in diagrams, the left hand side of \eqref{eqn: cobracket prf 1} is understood to be
    \begin{equation*}
        \includegraphics[scale=1.2]{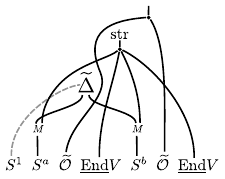}\;\;.
    \end{equation*}
    Letting $\Phi := f \otimes \phi$ and $\Psi := g \otimes \psi$ in $\widetilde{\mathcal{O}} \otimes \underline{\operatorname{End}}{V}$, this sends $s^1 \otimes s^a \otimes \Phi \otimes s^b \otimes \Psi$ to
    \begin{equation}\label{eqn: cobracket prf 2}
        (-1)^{\varepsilon + \mu + \nu + \rho + \sigma}
        \widetilde{\Delta}\left( s^1 \otimes \tensor{(X^a)}{_\mu^\nu} \cdot \tensor{(X^b)}{_\rho^\sigma}\right)
        \operatorname{str}\left(\tensor{(h^{s(a) \to t(a)})}{^\mu_\nu} \cdot \phi \cdot \tensor{(h^{s(b) \to t(b)})}{^\rho_\sigma} \cdot \psi\right)
        f \cdot g,
    \end{equation}
    where $\varepsilon$ is the Koszul sign arising from overlaps in the diagram. That is,
    \begin{align*}
        \varepsilon &= (\mu + \nu + a)(\mu + \nu) + (\mu + \nu) + \Phi(b + \rho + \sigma) + f(\mu + \nu) + g \psi\\
        &= (a + f)(\mu + \nu) + \Phi(b + \rho + \sigma) + g \psi \mod{2}.
    \end{align*}
    We then find that \begin{align*}
        \operatorname{str}\left(\tensor{(h^{s(a) \to t(a)})}{^\mu_\nu} \cdot \phi \cdot \tensor{(h^{s(b) \to t(b)})}{^\rho_\sigma} \cdot \psi\right)
        &=
        (-1)^{\varepsilon'}
        \operatorname{str}\left(
        \psi \circ \tensor{(h^{s(b) \to t(b)})}{^\rho_\sigma} \circ \phi \circ \tensor{(h^{s(a) \to t(a)})}{^\mu_\nu}
        \right)\\
        &= (-1)^{\varepsilon' + \mu}\tensor{\phi}{_\nu^\rho} \tensor{\psi}{_\sigma^\mu},
    \end{align*}
    where 
    \begin{equation*}
        \varepsilon' = (\mu + \nu)(\phi + \rho + \sigma + \psi) + \phi(\rho + \sigma + \psi) + (\rho + \sigma)\psi \mod{2}.
    \end{equation*}
    Using this as well as \eqref{eqn: delta-tilde, explicitely}, we get that \eqref{eqn: cobracket prf 2} is given by
    \begin{equation}\label{eqn: cobracket prf 3}
        \langle a,b \rangle(-1)^{\varepsilon''}t_{\mu\rho}^{\nu\sigma} \tensor{\phi}{_\nu^\rho} \tensor{\psi}{_\sigma^\mu} f \cdot g,
    \end{equation}
    where
    \begin{equation*}
        \varepsilon'' = \varepsilon + \mu + \nu + \rho + \sigma + \varepsilon' + \mu + (\mu + \nu)(1 + b) \mod{2}.
    \end{equation*}
    By using
    \begin{itemize}
        \item ``$\mu + \nu + \rho + \sigma = \iota \mod{2}$'' or ``$t_{\mu\rho}^{\nu\sigma} = 0$'',
        \item ``$a + b = p$'' or ``$\langle a,b \rangle = 0$'', and
        \item ``$\phi = \nu + \rho \mod{2}$ and $\psi = \sigma + \mu \mod{2}$'' or ``$\tensor{\phi}{_\nu^\rho} \tensor{\psi}{_\sigma^\mu} = 0$'',
    \end{itemize}
    a calculation shows that $\varepsilon''$ in \eqref{eqn: cobracket prf 3} can be taken to be
    \begin{equation*}
        \varepsilon'' = (\mu + \nu)(1 + \phi) + \Phi(b + \iota) + g\psi + \phi\psi + \mu.
    \end{equation*}
    Now, by \eqref{eqn: t-components generalisation}, \eqref{eqn: cobracket prf 3} is found to be
    \begin{equation}\label{eqn: cobracket prf 4}
        \frac{1}{2}\langle a,b \rangle
        (-1)^{(\mu + \nu)(1 + \phi) + \Phi(b + \iota) + g\psi + \phi \psi}
        \left[
        \tensor{(\iota^{-1})}{_{\mu}^{\sigma}} \delta_{\rho}^{\nu} + (-1)^{\iota \cdot (\mu + \nu + 1)} \delta_{\mu}^{\sigma}\tensor{(\iota^{-1})}{_{\rho}^{\nu}}
        \right] \tensor{\phi}{_\nu^\rho} \tensor{\psi}{_\sigma^\mu} f \cdot g.
    \end{equation}
    The first term in the square bracket of \eqref{eqn: cobracket prf 4} gives rise to the term
    \begin{align*}
        &\frac{1}{2}\langle a,b \rangle(-1)^{\mu + \nu + \Phi(b + \iota) + g \iota} 
        \tensor{(\iota^{-1})}{_{\mu}^{\sigma}} \tensor{\psi}{_\sigma^\mu} \delta_{\rho}^{\nu}  \tensor{\phi}{_\nu^\rho}  f \cdot g \\
        &=
        \frac{1}{2}\langle a,b \rangle(-1)^{\Phi(b + \iota) + \iota g} 
        \operatorname{str}\left(\psi \circ (\iota^{t(b)})^{-1}\right)
        \operatorname{str}\left((\mathrm{id}^{t(a)}) \circ \phi \right) f \cdot g\\
        &=\frac{1}{2}\langle a,b \rangle(-1)^{\Phi(b + \iota) + \iota g + \iota} 
        \operatorname{str}\left((\mathrm{id}^{t(a)}) \cdot \phi \right)
        \operatorname{str}\left((\iota^{t(b)})^{-1} \cdot \psi \right) f \cdot g,
    \end{align*}
    where we have $\delta_{\rho}^\nu = \tensor{(\mathrm{id}^{t(a)})}{_{\rho}^{\nu}}$ since $\nu$ labels the basis $\{v_{\nu}^{(t(a))}\}$ of $V_{t(a)}$, for example. Finally, this can be rewritten as 
    \begin{equation*}
        \frac{1}{2}\langle a,b \rangle (-1)^{\Phi(a + 1) + \iota}
        (1 \otimes \operatorname{str})\left((1 \otimes \mathrm{id}^{t(a)}) \cdot \Phi \right)
        (1 \otimes \operatorname{str})\left((1 \otimes (\iota^{t(b)})^{-1}) \cdot \Psi \right),
    \end{equation*}
    which is precisely what the second term on the right hand side of \eqref{eqn: cobracket prf 1} sends $s^1 \otimes s^a \otimes \Phi \otimes s^b \otimes \Psi$ to.

    Similarly, we find that the second term in the square bracket of \eqref{eqn: cobracket prf 4} gives rise to the term
    \begin{equation*}
        \frac{1}{2}\langle a,b \rangle (-1)^{\Phi b}
        (1 \otimes \operatorname{str})\left((1 \otimes (\iota^{t(a)})^{-1}) \cdot \Phi \right)
        (1 \otimes \operatorname{str})\left((1 \otimes \mathrm{id}^{t(b)}) \cdot \Psi \right),
    \end{equation*}
    which is precisely what the first term on the right hand side of \eqref{eqn: cobracket prf 1} sends $s^1 \otimes s^a \otimes \Phi \otimes s^b \otimes \Psi$ to.
\end{proof}

\begin{proof}[Proof of Proposition \ref{prop: intertwining with (little) delta}]
We now really do have to think purely in terms of $\mathbb{Z}_2$-gradings. First note that it suffices to prove the result for $x$ of the form $s^{\iota} \otimes P$ where $P$ is a path in $\overline{Q}$. If $P$ has length $\leq 1$, then it clear that both sides of \eqref{eqn: intertwining with (little) delta identity} vanish. Now let $A$ be a path of length $\geq 2$. We show that
\begin{equation}\label{eqn: intertwining with (little) delta 2}
    \varphi\left( \hbar \widetilde{\delta}(s^1 \otimes (s^\iota \otimes A_k))\right) = \widetilde{\Delta}\left(s^1 \otimes \varphi(s^{\iota} \otimes A_k)\right),
\end{equation}
where we have made the $s$'s explicit. Recall the definition of $\delta(A_k)$ in \eqref{eqn: delta usual-notation expression}, which we can alternatively write as
\begin{equation*}
    \delta(A_k) 
    =
    \frac{1}{2} \sum_{i<j}^{A} (-1)^{\varepsilon_{ki}^A + A_{ij} \cdot a_j} \langle a_i, a_j \rangle \left( A_{ij} \otimes A_{ji} + (-1)^{p + 1 + A_{ij} \cdot A_{ji}} A_{ji} \otimes A_{ij}\right),
\end{equation*}
where here ``$i < j$'' means that $i$ and $j$ are distinct indices with $i$ appearing before $j$ as one travels along the path starting at $k$. As was explained under \eqref{eqn: shift of br and delta}, there is no sign picked up in the shift of $\delta$ to $\overline{\delta}$. So $\overline{\delta}(s^{\iota} \otimes A_k)$ is given by
\begin{equation*}
    \overline{\delta}(s^{\iota} \otimes A_k) 
    =
    \frac{1}{2} \sum_{i<j}^{A} (-1)^{\varepsilon_{ki}^A + A_{ij} \cdot a_j} \langle a_i, a_j \rangle \left( (s^{\iota} \otimes A_{ij}) \otimes (s^{\iota} \otimes A_{ji}) + (-1)^{\iota + A_{ij} \cdot A_{ji}} (s^{\iota} \otimes A_{ji}) \otimes (s^{\iota} \otimes A_{ij})\right).
\end{equation*}
Making the $s^1$ factor explicit, it follows that $\widetilde{\delta}(s^1 \otimes (s^{\iota} \otimes A_k))$ is
\begin{equation*}
    \widetilde{\delta}(s^1 \otimes (s^{\iota} \otimes A_k)) = \sum_{i<j}^{A} (-1)^{\varepsilon_{ki}^A + A_{ij} \cdot a_j} \langle a_i, a_j \rangle (s^{\iota} \otimes A_{ij})(s^{\iota} \otimes A_{ji}),
\end{equation*}
the $ij$'th term of which can be written diagrammatically as
\begin{equation}\label{eqn: cobracket prf 5}
    \includegraphics[scale=1.2]{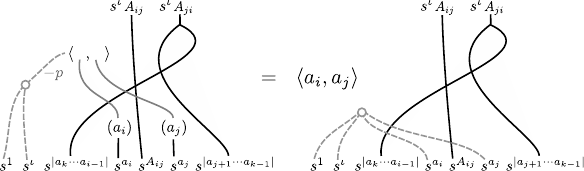}\;\;,
\end{equation}
where the appearance of the two $s^{\iota}$'s at the top can be reconciled due to their total degree being $2 \iota = 0 \mod{2}$. Precomposing \eqref{eqn: cobracket prf 5} with the permutation
\begin{equation}\label{eqn: permutation 2}
    \includegraphics[scale=1.2]{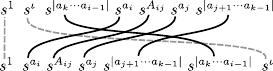}\;\;
\end{equation}
yields
\begin{equation*}
    \includegraphics[scale=1.2]{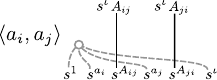}\;\;.
\end{equation*}
Applying the map $\varphi$ gives
\begin{equation*}
    \includegraphics[scale=1.2]{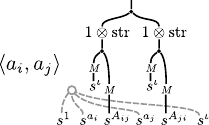}\;\;.
\end{equation*}
And so, the $ij$'th term of $\varphi(\hbar\widetilde{\delta}(s^1 \otimes (s^{\iota} \otimes A_k)))$ precomposed with the permutation \eqref{eqn: permutation 2} is given by
\begin{equation}\label{eqn: cobracket prf 7}
    \hbar \langle a_i,a_j \rangle (-1)^{A_{ij} \cdot a_j} (1 \otimes \operatorname{str})(M_{\iota} \cdot M_{A_{ij}}) \cdot (1 \otimes \operatorname{str})(M_{\iota} \cdot M_{A_{ji}}).
\end{equation}

We now consider the $ij$'th term of $\widetilde{\Delta}(s^1 \otimes \varphi(s^{\iota} \otimes A_k))$, which is given by
\begin{equation*}
    \includegraphics[scale=1.2]{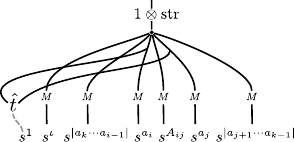}\;\;.
\end{equation*}
Precomposing this with the permutation \eqref{eqn: permutation 2}, we get
\begin{equation*}
    \includegraphics[scale=1.2]{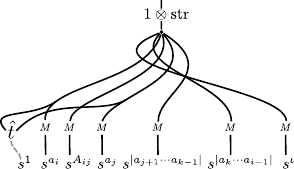}\;\;.
\end{equation*}
By cyclicity of $1 \otimes \operatorname{str}$ and Proposition \ref{prop: commutativity}, this is nothing but
\begin{equation*}
    \includegraphics[scale=1.2]{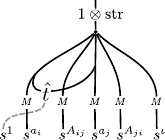}\;\;.
\end{equation*}
Applying Lemma \ref{lem: the WORM}, we see that the $ij$'th term of $\widetilde{\Delta}(s^1 \otimes \varphi(s^{\iota} \otimes A_k))$ precomposed by the permutation \eqref{eqn: permutation 2} is
\begin{align*}
    \frac{1}{2}\langle a_i, a_j \rangle (-1)^{A_{ij} \cdot a_j}
    \Big\{
     (1 \otimes \operatorname{str})\big(\underbrace{(1 \otimes (\iota^{t(a_i)})^{-1})}_{M_{\iota^{-1}} \cdot M_{e_{t(a_i)}}} \cdot M_{A_{ij}}\big) \cdot 
     (1 \otimes \operatorname{str})\big(\underbrace{(1 \otimes \mathrm{id}^{t(a_j)})}_{M_{e_{t(a_j)}}} \cdot M_{A_{ji}} \cdot M_{\iota} \big)\\
    + (-1)^{\iota}
    (1 \otimes \operatorname{str})\big(\underbrace{(1 \otimes \mathrm{id}^{t(a_i)})}_{M_{e_{t(a_i)}}} \cdot M_{A_{ij}}\big) \cdot 
    (1 \otimes \operatorname{str})\big(\underbrace{(1 \otimes (\iota^{t(a_j)})^{-1})}_{M_{\iota^{-1}} \cdot M_{e_{t(a_j)}}} \cdot M_{A_{ji}} \cdot M_{\iota} \big)
    \Big\},
\end{align*}
which, using Proposition \ref{prop: commutativity} as well as that $M_{e_{t(a_i)}} \cdot M_{A_{ij}} = M_{A_{ij}}$, for example, is equal to
\begin{align}\label{eqn: cobracket prf 6}
\begin{split}
    \frac{1}{2}\langle a_i, a_j \rangle (-1)^{A_{ij} \cdot a_j}
    \Big\{
     (1 \otimes \operatorname{str})\big(M_{\iota^{-1}} \cdot M_{A_{ij}}\big) \cdot 
     (1 \otimes \operatorname{str})\big( M_{\iota} \cdot M_{A_{ji}}\big)\\
    + (-1)^{\iota}
    (1 \otimes \operatorname{str})\big(M_{A_{ij}}\big) \cdot 
    (1 \otimes \operatorname{str})\big(M_{A_{ji}}\big)
    \Big\}.
\end{split}
\end{align}

For \eqref{eqn: intertwining with (little) delta 2} to hold, we need \eqref{eqn: cobracket prf 6} to be equal to \eqref{eqn: cobracket prf 7}. For this we consider two cases:
\begin{itemize}
    \item $p=0 \mod{2}$.\\
    Since $\iota^2$ is equal to $\lambda \mathrm{id}$, and since $M_\iota$ commutes with $M_A$ for any path $A$, we have
    \begin{align*}
        \lambda (1 \otimes \operatorname{str})(M_{A})
        &=
        (1 \otimes \operatorname{str})(M_{\iota}^2 \cdot M_A)
        =
        (-1)^{\iota(\iota + A)} (1 \otimes \operatorname{str})(M_{\iota} \cdot M_A \cdot M_{\iota})\\
        &=
        (-1)^{\iota} (1 \otimes \operatorname{str})(M_{\iota}^2 \cdot M_A)
        = -\lambda (1 \otimes \operatorname{str})(M_{A}),
    \end{align*}
    where we used that $\iota$ is odd. Thus $(1 \otimes \operatorname{str})(M_A)$ vanishes for all paths $A$. Furthermore, $\iota^{-1} = \frac{1}{\lambda} \iota$, and so \eqref{eqn: cobracket prf 6} reduces to
    \begin{equation*}
        \frac{1}{2 \lambda}\langle a_i, a_j \rangle (-1)^{A_{ij} \cdot a_j}
         (1 \otimes \operatorname{str})\big(M_{\iota} \cdot M_{A_{ij}}\big) \cdot 
         (1 \otimes \operatorname{str})\big( M_{\iota} \cdot M_{A_{ji}}\big),
    \end{equation*}
    which agrees with \eqref{eqn: cobracket prf 7} when $\lambda = (2 \hbar)^{-1}$.

    \item $p=1 \mod{2}$.\\
    We set $\iota = (\lambda)^{1/2} \mathrm{id}$ (so that $\iota^2 = \lambda \mathrm{id}$). With this, \eqref{eqn: cobracket prf 6} reduces to
    \begin{equation*}
        \frac{1}{\lambda}\langle a_i, a_j \rangle (-1)^{A_{ij} \cdot a_j}
         (1 \otimes \operatorname{str})\big(M_{\iota} \cdot M_{A_{ij}}\big) \cdot 
         (1 \otimes \operatorname{str})\big( M_{\iota} \cdot M_{A_{ji}}\big),
    \end{equation*}
    which agrees with \eqref{eqn: cobracket prf 7} when $\lambda = \hbar^{-1}$.
\end{itemize}
In either case, for suitably defined maps $\{\iota^i\}_{i \in \overline{Q}_0}$ we have that
\begin{equation*}
    \varphi(\hbar\widetilde{\delta}(s^1 \otimes (s^{\iota} \otimes A_k)))
    =
    \widetilde{\Delta}(s^1 \otimes \varphi(s^{\iota} \otimes A_k)).
\end{equation*}
\end{proof}

Combining Lemma \ref{lem:IBLSymHOm}, Proposition \ref{prop: gerstenhaber}, and \ref{prop: intertwining with (little) delta}, we arrive at our main result:
\begin{thm}
    \textit{If $p = 0 \mod{2}$, suppose that the odd maps $\{\iota^i\}_{i \in \overline{Q}_0}$ satisfy $(\iota^i)^2 = (2 \hbar)^{-1}\mathrm{id}_{V_i}$ for each $i$. If instead $p=1 \mod{2}$, suppose that the even maps $\{\iota^i\}_{i \in \overline{Q}_0}$ are given by $\iota^i = (\hbar)^{-1/2} \mathrm{id}_{V_i}$ for each $i$. Then the map $\varphi\colon \mathscr{B} \to \widetilde{\mathcal{O}}$ is a BV algebra morphism. That is, $\varphi$ intertwines the BV operators:
    \begin{equation}\label{eqn: BV operator intertwining}
        \varphi \circ \Delta_\hbar = \widetilde{\Delta} \circ \varphi.
    \end{equation}}
\end{thm}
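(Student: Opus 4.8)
The plan is to obtain this theorem by assembling the results already in place, with no new computation. The structural input is Lemma~\ref{lem:IBLSymHOm}, applied with $\mathcal{G}=\mathscr{A}[p+1]$, $\widetilde{b}=\widetilde{\operatorname{br}}$, and $\widetilde{c}=\hbar\,\widetilde{\delta}$: since $\varphi\colon\mathscr{B}=\Sym(\mathscr{A}[p+1])\to\widetilde{\mathcal{O}}$ is a graded algebra morphism (it is defined as such, and was checked to respect the cyclic relations \eqref{eq:cyclicpropexplicit}), it intertwines the BV operators if and only if the two generator-level identities \eqref{eq:brPois} and \eqref{eq:cobrDelta} hold.

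First I would confirm the hypotheses of Lemma~\ref{lem:IBLSymHOm}, i.e.\ that $\Delta_\hbar=\widetilde{\operatorname{br}}+\hbar\,\widetilde{\delta}$ is genuinely a BV operator on $\mathscr{B}$. This is recorded in Definition~\ref{def:necklaceBV}, and rests on Proposition~\ref{prop:GSNLB} (the tuple $(\mathscr{A},\operatorname{br},\delta)$ is a degree $p$ involutive Lie bialgebra) together with Proposition~\ref{prop: shiftIBL} (shifting by $p+1$ converts the six axioms of Definition~\ref{defn: GLBv2} into the four conditions of Proposition~\ref{prop:constructing BVA}); rescaling $\widetilde{\delta}\mapsto\hbar\,\widetilde{\delta}$ preserves those four conditions, since co-Jacobi is quadratic and involutivity and the cocycle identity are linear in the cobracket.

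Next I would cite the two Propositions doing the real work. Proposition~\ref{prop: gerstenhaber} is precisely \eqref{eq:brPois} for $\widetilde{b}=\widetilde{\operatorname{br}}$, namely $\varphi(\widetilde{\operatorname{br}}(xy))=\{\varphi(x),\varphi(y)\}_{\widetilde{\Delta}}$ for all $x,y\in\mathscr{A}[p+1]$. Proposition~\ref{prop: intertwining with (little) delta} is precisely \eqref{eq:cobrDelta} for $\widetilde{c}=\hbar\,\widetilde{\delta}$, namely $\varphi(\hbar\,\widetilde{\delta}(x))=\widetilde{\Delta}(\varphi(x))$, and it holds exactly under the normalisations $\lambda=\tfrac{1}{2\hbar}$ when $p=0$ and $\iota^i=\hbar^{-1/2}\id_{V_i}$ when $p=1$. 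I would then note that the hypotheses stated in the theorem are these normalisations verbatim: $(\iota^i)^2=(2\hbar)^{-1}\id_{V_i}$ is the condition $\lambda=\tfrac{1}{2\hbar}$, and $\iota^i=\hbar^{-1/2}\id_{V_i}$ is the $p=1$ assumption. Feeding \eqref{eq:brPois} and \eqref{eq:cobrDelta} back into Lemma~\ref{lem:IBLSymHOm} yields \eqref{eqn: BV operator intertwining}.

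Since the hard work --- the sign-bookkeeping behind Lemmas~\ref{lem: intertwining 1}, \ref{lem: intertwining 2}, \ref{lem: the WORM} and behind Propositions~\ref{prop: gerstenhaber} and~\ref{prop: intertwining with (little) delta} --- is already done, the only ``obstacle'' at this stage is organisational: keeping the domain identifications consistent (that $\mathcal{G}=\mathscr{A}[p+1]$, and that the factor $\hbar$ sits on $\widetilde{\delta}$ rather than on $\widetilde{\operatorname{br}}$, across Lemma~\ref{lem:IBLSymHOm}, Definition~\ref{def:necklaceBV}, and the two Propositions), and checking that the BV property of $\Delta_\hbar$ survives the $\hbar$-rescaling.
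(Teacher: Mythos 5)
Your proposal is correct and follows exactly the paper's own route: the theorem is obtained by combining Lemma \ref{lem:IBLSymHOm} (with $\mathcal G = \mathscr{A}[p+1]$, $\widetilde b = \widetilde{\operatorname{br}}$, $\widetilde c = \hbar\widetilde{\delta}$) with Propositions \ref{prop: gerstenhaber} and \ref{prop: intertwining with (little) delta}, whose hypotheses match the theorem's normalisation assumptions verbatim. Your additional remark that the $\hbar$-rescaling of the cobracket preserves the four BV conditions is a sensible (and correct) bit of due diligence that the paper leaves implicit in Definition \ref{def:necklaceBV}.
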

\appendix
\section{Possibility of \texorpdfstring{$\mathbb Z$}{Z}-grading}\label{Appendix:ZZ2}
Let us now sketch a possible extension of this work to a $\mathbb Z$-graded setting. Our necklace Lie bialgebra is naturally $\mathbb Z$-graded, with new edges having degree $p$. The necklace Lie bracket and cobracket have degree $-p$. On the shifted space $\mathscr{A}[p+1]$, the shifted bracket has degree $+1$, while the shifted cobracket has degree $-2p-1$. 

To get a Batalin-Vilkovisky algebra structure, one should add $\hbar$ as a formal variable, i.e. consider $(\Sym \mathscr{A}[p+1])[\hbar]$ with $\hbar$ having degree $2p+2$. The BV operator will be given by $\Delta = \widetilde{\operatorname{br}} + \hbar \widetilde{\delta}$.

On the representation variety side, the situation is less clear. It still makes sense to ask for each vertex $i$ of the quiver to be equipped with a vector space $V_i$, an invertible linear map $\iota$ of degree $-p-1$, and an invertible linear map $\hbar$ of degree $2p+2$. However, this forces the spaces $V_i$ to be infinite-dimensional (or trivial), as the dimensions of the graded components of $V_i$ will be $p+1$-periodic. Therefore, the pairing \eqref{eqn: pairing} and the BV map $\varphi$ are not defined. One could define a BV operator on functions on the $\iota$-intertwining representation variety directly as in Proposition \ref{prop:BVRepexp}. Then it remains to renormalize the map $\varphi$, by using e.g. only the first $p+1$ graded components in computing the trace. We leave these questions open for future investigation.

\section{Comparison of notions of shifted Lie bialgebras}\label{Appendix:IBL}
Lie bialgebras with arbitrary degrees of the bracket and the cobracket appeared in \cite[Sec.~2.1]{MWLieBialgProp} and \cite[Sec.~1.1]{KMW2016}. Based on the parity of $n = |\operatorname{br}| + |\delta|$, the sum of degrees of the bracket and the cobracket (which is invariant w.r.t. shifting), such Lie bialgebras are called even or odd. 

Necklace Lie bialgebras we construct in our paper are always even, and thus we can shift them to match the grading convention of \cite{MWLieBialgProp}. Concretely, we have.
\begin{prop}
    Let $(\mathscr A, \delta, \operatorname{br})$ be a degree $p$ Lie bialgebra in the sense of our Definition \ref{defn: GLBv2}. Then $\mathscr{A}[-p]$ with $\delta^{[-p]}$ and $\operatorname{br}^{[-p]}$ is an involutive Lie $(-2p)$-bialgebra in the sense of \cite[Sec.~2.1]{MWLieBialgProp}.
\end{prop}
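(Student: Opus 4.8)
The plan is to read the statement as a pure unwinding of definitions through the shift formalism of Definition~\ref{defn:shiftmorph}: the argument is the one used to prove Proposition~\ref{prop: shiftIBL}, run verbatim with the shift parameter $k=p+1$ replaced by $k=-p$, and it contains no new mathematical content beyond sign bookkeeping and a comparison of conventions. So first I would write out Merkulov--Willwacher's definition of an involutive Lie $n$-bialgebra from \cite[Sec.~2.1]{MWLieBialgProp}, recording the cohomological degrees it imposes on the bracket and cobracket (whose sum must be $n$) together with the precise signs in its six axioms --- graded antisymmetry, graded coantisymmetry, Jacobi, co-Jacobi, involutivity, and the Drinfeld/cocycle compatibility --- and then apply $(-)^{[-p]}$ termwise to the diagrams for Items~\ref{gb1}--\ref{gb6} of Definition~\ref{defn: GLBv2}, checking that one lands exactly on those axioms with $n=-2p$.

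Concretely, I would proceed in the following steps. First, the degrees: since $\operatorname{br}$ and $\delta$ have degree $-p$, Definition~\ref{defn:shiftmorph} yields $|\operatorname{br}^{[-p]}|=-2p$ and $|\delta^{[-p]}|=0$, so their sum is $-2p$, matching the statement; one checks that the pair $(-2p,0)$ is the normalisation \cite{MWLieBialgProp} uses for total degree $-2p$ (such an integer shift existing precisely because our necklace bialgebras have \emph{even} total degree). Second, the sign rule: exactly as in the proof of Proposition~\ref{prop: shiftIBL}, shifting commutes with composition and the tensoring signs \eqref{eq:shiftsfortensors} govern the rest; the only structural change from the $k=p+1$ case is that the self-crossing of the suspension line $S^{-k}=S^{p}$ now carries $(-1)^{p^{2}}=(-1)^{p}$ instead of $(-1)^{p+1}$, and the companion tensoring signs change accordingly. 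Third, the axiom-by-axiom check: in Items~\ref{gb1} and \ref{gb2} the single crossing contributes $(-1)^{p}$, which now combines with the $(-1)^{p+1}$ already present to give $-1$, so $\operatorname{br}^{[-p]}$ becomes graded antisymmetric and $\operatorname{im}(\delta^{[-p]})$ lands in the graded-antisymmetric part --- this is the $\bigwedge$-type shape of a Lie bialgebra and is exactly where the computation parts ways with the graded-\emph{symmetric} ($\Sym$) output of Proposition~\ref{prop: shiftIBL}; Items~\ref{gb3}--\ref{gb5} involve only composition and an even number of crossings per term, so Jacobi, co-Jacobi and involutivity transfer directly, now read on $\mathscr A[-p]$ with the shifted Koszul signs built into $\mathfrak{S}_{n}$.

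The step I expect to be the actual obstacle is Item~\ref{gb6}: lining up our cocycle sign with the precise Drinfeld-double convention of \cite[Sec.~2.1]{MWLieBialgProp}, which is where conventions (left versus right actions, placement of the suspension, the chosen direction of the coproduct) most easily disagree. Here I would rewrite Item~\ref{gb6} in the symmetric form~\eqref{eq:altform}, push $(-)^{[-p]}$ through term by term --- using that the $\operatorname{ad}$-terms carry an odd number of crossings and the remaining terms an even number --- and match against MW's formula, absorbing any leftover reindexing into the identification $\mathscr A[-p]\cong S^{p}\otimes\mathscr A$. As a sanity check, shifting $\mathscr A[-p]$ once more by the odd amount $2p+1$ returns us to $\mathscr A[p+1]$ with the BV data of Propositions~\ref{prop:constructing BVA} and \ref{prop: shiftIBL}; since an odd shift interchanges the $\Sym$-packaging of a BV operator with the $\bigwedge$-packaging of a Lie bialgebra, compatibility of the two descriptions fixes any remaining sign.
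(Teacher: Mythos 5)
Your proposal is correct and follows essentially the same route as the paper's proof: both reduce the statement to the crossing-counting mechanism of Proposition \ref{prop: shiftIBL} run with shift parameter $-p$ instead of $p+1$, note that the shifted cobracket has degree $0$ and the shifted bracket degree $-2p$, observe that nothing changes for $p$ even, and for $p$ odd track the extra sign per crossing through Items \ref{gb1}--\ref{gb6}, with the cocycle condition being the only place requiring a term-by-term match against the convention of \cite[Sec.~2.1]{MWLieBialgProp}. Your write-up is somewhat more detailed than the paper's (which is quite terse on the cocycle step, simply asserting that terms 2 and 3 of \eqref{eq:pictorialcocycle} acquire a minus sign), but there is no substantive difference in approach.
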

Note that this shift by $-p$ is the other possible case $\mod \mathbb Z_2$, compared to the shift by $p+1$ we performed in Proposition \ref{prop: shiftIBL}.
\begin{proof}
    After the shift by $[-p]$, the cobracket is of degree $0$, while the bracket is of degree $[-2p]$.  For the shifted bracket and cobracket, we need check that they satisfy the requirements from \cite[Sec.~2.1]{MWLieBialgProp}. For $p=0 \mod 2$, we are in the case of usual Lie bialgebras in both cases, and no shift is necessary. For $p=1 \mod 2$, we use Equation \eqref{eq:shiftsfortensors} and the sign coming from a flip. The symmetry properties, Jacobi, coJacobi and involutivity follow easily. The cocycle condition follows after some manipulations, let us just say that terms 2 and 3 in \eqref{eq:pictorialcocycle} acquire a minus sign, and match with terms 2 and 4 of the RHS of the cocycle condition in \cite[Sec.~2.1]{MWLieBialgProp}.
\end{proof}

The connection with $\mathsf{IBL}_\infty$ algebras of \cite{CFL} is more direct. By Proposition \ref{prop: shiftIBL}, a degree $p$ Lie bialgebra $(\mathscr{A}, \operatorname{br}, \delta)$ induces a BV operator on $\Sym(\mathscr A[p+1])[\hbar]$ given by $b + \hbar c$, where $b = \operatorname{br}^{[p+1]}$, $c = \delta^{[p+1]}$ and $\hbar$ has degree $2p+2$.

On the other hand, an $\mathsf{IBL}_\infty$ algebra  with parameter $d\in \mathbb Z$ on a $\mathbb Z$-graded vector space\footnote{This $D$ is $C[1]$ in the notation of \cite{CFL}, note that for the definition of an $\mathsf{IBL}_\infty$, only the grading on $D$ plays a role.} $D$ is given by a sequence of operations $\mathfrak{p}_{k,l,g}\colon \Sym^k D \to \Sym^l D$ of degree $|\mathfrak p_{k, l, g}| = -2(k+g-1) -1$, such that their sum squares to zero \cite[Def.~2.3]{CFL}.

To get an $\mathsf{IBL}_\infty$ algebra from $(\mathscr{A}, \operatorname{br}, \delta)$, we just need to match the two grading conventions\footnote{In \cite{CFL}, auxiliary formal variables $\hbar$ and $\tau$ are used to separate different terms of the defining $\mathrm{IBL}_\infty$ identity $\Delta^2=0$. In our case, with only two non-zero operations, $\hbar$ and $\tau$ don't play any role.}. For example, we can set $d=-1$ and $D = \mathscr A[p+1]$ with two non-trivial operations $\mathfrak p_{2, 1, 0} := b$, $\mathfrak p_{1, 2, -p} := c$. Other solutions are also possible, by reversing or further shifting the grading of $\mathscr A$. For example, on $\mathscr{A}[p+1+2a]$, we can set $p_{2, 1, g_b} := b^{[2a]}$, $p_{1, 2, g_c} := c^{[2a]}$, for any $a, d\in \mathbb Z$ and $g_b, g_c \ge 0$ satisfying $-d(1+g_b) = 1 + a$ and $dg_c = p+a$.

\section{Proof of Proposition \ref{prop:GSNLB} (\texorpdfstring{$p=0$}{p=0} case)}\label{Appendix:Cals}
In this calculation we will intentionally ignore the edge cases relating to the appearances of constant paths, assuring the reader that everything works out if they are treated properly.

To aid in our calculations, we introduce new summation notations. To see how these work, suppose we have a closed path $a_1 \cdots a_n$ which we denote by $A$ when forgetting endpoints. Then, for example,
\begin{itemize}
    \item $\; \mathrel{\raisebox{-1.8ex}{\includegraphics[scale=1]{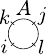}}}\; $ is the instruction to sum over all tuples of indices $(i,j,k,l)$ labelling arrows in $A$; and

    \item $\; \mathrel{\raisebox{-1.8ex}{\includegraphics[scale=1]{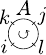}}}\;$ is the instruction to sum over only those tuples $(i,j,k,l)$ where the indices are pairwise distinct and subject to the depicted cyclic ordering.
\end{itemize}

Now fix $p=0$. The bracket and cobracket can be written as
\begin{equation*}
    \operatorname{br}(A,B) = \; 
    \mathrel{\raisebox{-3.1ex}{\includegraphics[scale=1]{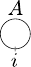}}}\;
    \mathrel{\raisebox{-3.5ex}{\includegraphics[scale=1]{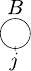}}}
    \langle a_i, b_j\rangle A_{ii}B_{jj}
    \quad \text{and} \quad
    \delta(A) = \;\mathrel{\raisebox{-1.8ex}{\includegraphics[scale=1]{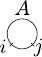}}} \langle a_i, a_j \rangle A_{ij} \otimes A_{ji},
\end{equation*}
where we are safe to drop the indices $k$ and $l$ appearing in \eqref{eqn: Bracket defn}, for example, since the endpoints of closed paths in $\mathscr{A}$ are immaterial in the $p=0$ case. 

It suffices to show that the identities $(1)$--$(6)$ of Definition \ref{defn: GLBv2} are satisfied on the spanning sets of closed paths in $\overline{Q}$ with enpoints forgotten. In what follows, $A,B$ and $C$ denote such paths.
\begin{enumerate}
    \item  ``$\operatorname{br}(x,y) = -\operatorname{br}(y,x)$'' --- \textit{Bracket symmetry}:
    \begin{equation*}
        \operatorname{br}(A,B)
        =
        \; 
        \mathrel{\raisebox{-3.1ex}{\includegraphics[scale=1]{CustomSummationNotation/A-i-0.pdf}}}\;
        \mathrel{\raisebox{-3.5ex}{\includegraphics[scale=1]{CustomSummationNotation/B-j-0.pdf}}}
        \underbrace{\langle a_i, b_j\rangle}_{-\langle b_j, a_i \rangle} \underbrace{A_{ii}B_{jj}}_{B_{jj}A_{ii}}
        =
        -
        \;
        \mathrel{\raisebox{-3.5ex}{\includegraphics[scale=1]{CustomSummationNotation/B-j-0.pdf}}}
        \; 
        \mathrel{\raisebox{-3.1ex}{\includegraphics[scale=1]{CustomSummationNotation/A-i-0.pdf}}}
        \langle b_j, a_i \rangle B_{jj}A_{ii}
        = -\operatorname{br}(B,A).
    \end{equation*}

    \item ``$\mathrm{im}(\delta) \subset \langle x \otimes y - y \otimes x \rangle$'' --- \textit{Cobracket symmetry}:
    \begin{align*}
        \delta(A)
        &=
        \;\mathrel{\raisebox{-1.8ex}{\includegraphics[scale=1]{CustomSummationNotation/A-ij-0.pdf}}} \langle a_i, a_j \rangle A_{ij} \otimes A_{ji}
        =
        \left(\;\mathrel{\raisebox{-3.1ex}{\includegraphics[scale=1]{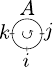}}} + \mathrel{\raisebox{-3.5ex}{\includegraphics[scale=1]{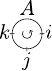}}}\right) \langle a_i, a_j \rangle A_{ij} \otimes A_{ji}\\
        &=
        \;\mathrel{\raisebox{-3.1ex}{\includegraphics[scale=1]{CustomSummationNotation/A-kij-1.pdf}}} \langle a_i, a_j \rangle \left(A_{ij} \otimes A_{ji} - A_{ji} \otimes A_{ij} \right),
    \end{align*}
    where $k$ is any fixed index, and here (and \textit{only} here) we do not require $k$ to be distinct from $i$ and $j$.

    \item  ``$\operatorname{br}(\operatorname{br} \otimes 1) \mathfrak{S}_3 = 0$'' --- \textit{Jacobi}:
    \begin{align*}
        \operatorname{br}(\operatorname{br}(A,B),C) 
        &=
        \; \mathrel{\raisebox{-3.1ex}{\includegraphics[scale=1]{CustomSummationNotation/A-i-0.pdf}}}\;
        \mathrel{\raisebox{-3.5ex}{\includegraphics[scale=1]{CustomSummationNotation/B-j-0.pdf}}}
        \langle a_i,b_j \rangle
        \operatorname{br}(A_{ii}B_{jj},C)\\
        &=
        \; \mathrel{\raisebox{-3.1ex}{\includegraphics[scale=1]{CustomSummationNotation/A-i-0.pdf}}}\;
        \mathrel{\raisebox{-3.5ex}{\includegraphics[scale=1]{CustomSummationNotation/B-j-0.pdf}}}
        \langle a_i,b_j \rangle
        \left(
        \; \mathrel{\raisebox{-3.1ex}{\includegraphics[scale=1]{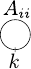}}}
        \; \mathrel{\raisebox{-3.1ex}{\includegraphics[scale=1]{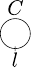}}}
        \langle a_k, c_l\rangle \underbrace{(A_{ii}B_{jj})_{kk}}_{A_{ki}B_{jj}A_{ik}}C_{ll}\; 
        +
        \; \mathrel{\raisebox{-3.1ex}{\includegraphics[scale=1]{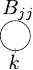}}}
        \; \mathrel{\raisebox{-3.1ex}{\includegraphics[scale=1]{CustomSummationNotation/C-l-0.pdf}}}
        \langle b_k, c_l\rangle \underbrace{(A_{ii}B_{jj})_{kk}}_{B_{kj}A_{ii}B_{jk}}C_{ll}
        \right)\\
        &=
        \underbrace{
            \; 
            \mathrel{\raisebox{-3.3ex}{\includegraphics[scale=1]{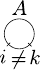}}}
            \mathrel{\raisebox{-3.5ex}{\includegraphics[scale=1]{CustomSummationNotation/B-j-0.pdf}}}
            \; 
            \mathrel{\raisebox{-3.1ex}{\includegraphics[scale=1]{CustomSummationNotation/C-l-0.pdf}}}
            \langle a_i, b_j \rangle \langle a_k, c_l \rangle A_{ki} B_{jj} A_{ik} C_{ll}}_{=: X_{ABC}} \;
        +
        \underbrace{
            \; 
            \mathrel{\raisebox{-3.1ex}{\includegraphics[scale=1]{CustomSummationNotation/A-i-0.pdf}}}
            \mathrel{\raisebox{-3.3ex}{\includegraphics[scale=1]{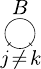}}}
            \mathrel{\raisebox{-3.1ex}{\includegraphics[scale=1]{CustomSummationNotation/C-l-0.pdf}}}
            \langle a_i, b_j \rangle \langle b_k, c_l \rangle B_{kj} A_{ii} B_{jk} C_{ll}}_{=: Y_{ABC}}.
    \end{align*}
    Now notice that we have
    \begin{align*}
        X_{BCA} &= \; 
            \mathrel{\raisebox{-3.3ex}{\includegraphics[scale=1]{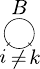}}}
            \mathrel{\raisebox{-3.5ex}{\includegraphics[scale=1]{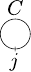}}}
            \; 
            \mathrel{\raisebox{-3.1ex}{\includegraphics[scale=1]{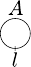}}}
            \langle b_i, c_j \rangle \langle b_k, a_l \rangle B_{ki} C_{jj} B_{ik} A_{ll}\\
            &= \;
            \mathrel{\raisebox{-3.3ex}{\includegraphics[scale=1]{CustomSummationNotation/B-jk-2.pdf}}}
            \mathrel{\raisebox{-3.1ex}{\includegraphics[scale=1]{CustomSummationNotation/C-l-0.pdf}}}
            \;
            \mathrel{\raisebox{-3.1ex}{\includegraphics[scale=1]{CustomSummationNotation/A-i-0.pdf}}}
            \langle b_k,c_l\rangle \underbrace{\langle b_j, a_i \rangle}_{-\langle a_i,b_j \rangle} \underbrace{B_{jk}C_{ll}B_{kj}A_{ii}}_{B_{kj}A_{ii}B_{jk}C_{ll}}
            \quad \text{(re-indexing with the permutation $(ikjl)$)}\\
            &= -Y_{ABC},
    \end{align*}
    and so
    \begin{equation*}
        \operatorname{br}(\operatorname{br}(A,B),C) + \operatorname{br}(\operatorname{br}(B,C),A) + \operatorname{br}(\operatorname{br}(C,A),B)
        = 0.
    \end{equation*}

    \item ``$\mathfrak{S}_3(\delta \otimes 1) \delta = 0$'' --- \textit{Co-Jacobi}:
    \begin{align*}
        (\delta \otimes 1) \delta (A) 
        &=
        (\delta \otimes 1)
        \left(
            \;\mathrel{\raisebox{-1.8ex}{\includegraphics[scale=1]{CustomSummationNotation/A-ij-0.pdf}}} \langle a_i, a_j \rangle A_{ij} \otimes A_{ji}
        \right)
        =
        \;\mathrel{\raisebox{-1.8ex}{\includegraphics[scale=1]{CustomSummationNotation/A-ij-0.pdf}}} \langle a_i, a_j \rangle \delta(A_{ij}) \otimes A_{ji}\\
        &=
        \;\mathrel{\raisebox{-1.8ex}{\includegraphics[scale=1]{CustomSummationNotation/A-ij-0.pdf}}}
        \;\mathrel{\raisebox{-1.6ex}{\includegraphics[scale=1]{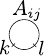}}} 
        \langle a_i, a_j \rangle \langle a_k, a_l \rangle 
        (A_{ij})_{kl} \otimes (A_{ij})_{lk} \otimes A_{ji}\\
        &=
        \; \mathrel{\raisebox{-1.8ex}{\includegraphics[scale=1]{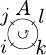}}}
        \langle a_i, a_j \rangle \langle a_k, a_l \rangle 
        \underbrace{(A_{ij})_{kl}}_{A_{kl}} \otimes \underbrace{(A_{ij})_{lk}}_{A_{lj}A_{ik}} \otimes A_{ji}\;
        +
        \mathrel{\raisebox{-1.8ex}{\includegraphics[scale=1]{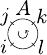}}}
        \langle a_i, a_j \rangle \langle a_k, a_l \rangle 
        \underbrace{(A_{ij})_{kl}}_{A_{kj}A_{il}} \otimes \underbrace{(A_{ij})_{lk}}_{A_{lk}} \otimes A_{ji}.
    \end{align*}
    Re-indexing the second term above with the permutation $(iljk)$, it becomes
    \begin{equation*}
        \mathrel{\raisebox{-1.8ex}{\includegraphics[scale=1]{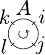}}}
        \underbrace{\langle a_l,a_k \rangle}_{-\langle a_k,a_l \rangle} \langle a_i, a_j \rangle
        \underbrace{A_{ik}A_{lj}}_{A_{lj}A_{ik}} \otimes A_{ji} \otimes A_{kl},
    \end{equation*}
    so $(\delta \otimes 1)\delta(A)$ is given by
    \begin{equation*}
        \mathrel{\raisebox{-1.8ex}{\includegraphics[scale=1]{CustomSummationNotation/A-iklj-1.pdf}}}
        \langle a_i,a_j \rangle \langle a_k,a_l \rangle
        \left(
        A_{kl}\otimes A_{lj}A_{ik} \otimes A_{ji} - A_{lj}A_{ik} \otimes A_{ji} \otimes A_{kl}
        \right),
    \end{equation*}
    from which we see that $\mathfrak{S}_3(\delta \otimes 1) \delta (A) = 0.$
    
    \item ``$\operatorname{br}\circ \delta = 0$'' --- \textit{Involutivity}:
    \begin{align*}
        \operatorname{br}(\delta(A))
        &= 
        \operatorname{br}\left(
        \;\mathrel{\raisebox{-1.8ex}{\includegraphics[scale=1]{CustomSummationNotation/A-ij-0.pdf}}} \langle a_i, a_j \rangle A_{ij} \otimes A_{ji}
        \right)
        =
        \;\mathrel{\raisebox{-1.8ex}{\includegraphics[scale=1]{CustomSummationNotation/A-ij-0.pdf}}}
        \;\mathrel{\raisebox{-3ex}{\includegraphics[scale=1]{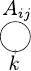}}}
        \;\mathrel{\raisebox{-3ex}{\includegraphics[scale=1]{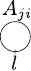}}}
        \langle a_i,a_j \rangle \langle a_k,a_l \rangle \underbrace{(A_{ij})_{kk}}_{A_{kj}A_{ik}} \underbrace{(A_{ji})_{ll}}_{A_{li}A_{jl}}\\
        &=
        \; \mathrel{\raisebox{-1.8ex}{\includegraphics[scale=1]{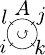}}}
        \langle a_i,a_j \rangle \langle a_k,a_l \rangle
        A_{kj}A_{ik}A_{li}A_{jl}\\
        &=
        \; \mathrel{\raisebox{-1.8ex}{\includegraphics[scale=1]{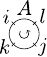}}}
        \langle a_k,a_l \rangle \underbrace{\langle a_j,a_i \rangle}_{-\langle a_i,a_j \rangle}
        A_{jl}A_{kj}A_{ik}A_{li} \quad \text{(re-indexing with the permutation $(ikjl)$)}\\
        &= - \operatorname{br}(\delta(A)),
    \end{align*}
    so $\operatorname{br}(\delta(A)) = 0$.

    \item ``$\delta(\operatorname{br}(x,y)) = \mathrm{ad}_x(\delta(y)) - \mathrm{ad}_y(\delta(x))$'' --- \textit{Cocycle condition}:
    \begin{align*}
        \delta(\operatorname{br}(A,B))
        =
        \delta\left(
        \; 
        \mathrel{\raisebox{-3.1ex}{\includegraphics[scale=1]{CustomSummationNotation/A-i-0.pdf}}}\;
        \mathrel{\raisebox{-3.5ex}{\includegraphics[scale=1]{CustomSummationNotation/B-j-0.pdf}}}
        \langle a_i, b_j\rangle \underbrace{A_{ii}B_{jj}}_{=: C}
        \right)
        =
        \; 
        \mathrel{\raisebox{-3.1ex}{\includegraphics[scale=1]{CustomSummationNotation/A-i-0.pdf}}}\;
        \mathrel{\raisebox{-3.5ex}{\includegraphics[scale=1]{CustomSummationNotation/B-j-0.pdf}}}
        \mathrel{\raisebox{-1.7ex}{\includegraphics[scale=1]{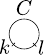}}}
        \langle a_i, b_j\rangle \langle c_k, c_l \rangle C_{kl} \otimes C_{lk}.
    \end{align*}
    Leaving implicit that $k \neq i,j$ and $l \neq i,j$ (where applicable), as a summation instruction we have
    \begin{equation*}
        \mathrel{\raisebox{-3.1ex}{\includegraphics[scale=1]{CustomSummationNotation/A-i-0.pdf}}}\;
        \mathrel{\raisebox{-3.5ex}{\includegraphics[scale=1]{CustomSummationNotation/B-j-0.pdf}}}
        \mathrel{\raisebox{-1.7ex}{\includegraphics[scale=1]{CustomSummationNotation/C-kl-0.pdf}}}\;
        =
        \underbrace{
        \;
        \mathrel{\raisebox{-3.1ex}{\includegraphics[scale=1]{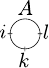}}}\;
        \mathrel{\raisebox{-3.5ex}{\includegraphics[scale=1]{CustomSummationNotation/B-j-0.pdf}}}\;}_{\rightsquigarrow \; \circled{1}}
        +
        \underbrace{
        \;
        \mathrel{\raisebox{-3.1ex}{\includegraphics[scale=1]{CustomSummationNotation/A-i-0.pdf}}}\;
        \mathrel{\raisebox{-3.1ex}{\includegraphics[scale=1]{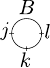}}}\;}_{\rightsquigarrow \; \circled{2}}
        +
        \underbrace{
        \;
        \mathrel{\raisebox{-1.8ex}{\includegraphics[scale=1]{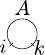}}}\;
        \mathrel{\raisebox{-1.8ex}{\includegraphics[scale=1]{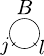}}}\;}_{\rightsquigarrow \; \circled{3}}
        +
        \underbrace{
        \;
        \mathrel{\raisebox{-1.8ex}{\includegraphics[scale=1]{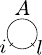}}}\;
        \mathrel{\raisebox{-1.8ex}{\includegraphics[scale=1]{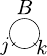}}}\;}_{\rightsquigarrow \; \circled{4}},
    \end{equation*}
    where we have introduced $\circled{1}, \dots, \circled{4}$ to denote the contributions to $\delta(\operatorname{br}(A,B))$ due to the relevant portions of the summation. Now,
    \begin{align*}
        \circled{1} &=
        \;
        \mathrel{\raisebox{-3.1ex}{\includegraphics[scale=1]{CustomSummationNotation/A-ikl-0.pdf}}}\;
        \mathrel{\raisebox{-3.5ex}{\includegraphics[scale=1]{CustomSummationNotation/B-j-0.pdf}}}
        \langle a_i, b_j\rangle \langle a_k, a_l \rangle (A_{ii}B_{jj})_{kl} \otimes (A_{ii}B_{jj})_{lk}\\
        &=\;
        \mathrel{\raisebox{-3.1ex}{\includegraphics[scale=1]{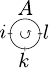}}}\;
        \mathrel{\raisebox{-3.5ex}{\includegraphics[scale=1]{CustomSummationNotation/B-j-0.pdf}}}
        \langle a_i, b_j\rangle \langle a_k, a_l \rangle 
        A_{kl} \otimes \underbrace{A_{li}B_{jj}A_{ik}}_{(A_{lk})_{ii}B_{jj}}\;
        +\;
        \mathrel{\raisebox{-3.1ex}{\includegraphics[scale=1]{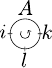}}}\;
        \mathrel{\raisebox{-3.5ex}{\includegraphics[scale=1]{CustomSummationNotation/B-j-0.pdf}}}
        \langle a_i, b_j\rangle \langle a_k, a_l \rangle \underbrace{A_{ki}B_{jj}A_{il}}_{(A_{kl})_{ii}B_{jj}} \otimes A_{lk}.
    \end{align*}
    As summation instructions, we have
    \begin{equation*}
        \mathrel{\raisebox{-3.1ex}{\includegraphics[scale=1]{CustomSummationNotation/A-ikl-1.pdf}}}
        \;
        =
        \;
        \mathrel{\raisebox{-1.8ex}{\includegraphics[scale=1]{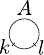}}}\;
        \mathrel{\raisebox{-3.1ex}{\includegraphics[scale=1]{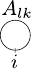}}} \quad \text{and} \quad
        \mathrel{\raisebox{-3.1ex}{\includegraphics[scale=1]{CustomSummationNotation/A-ilk-1.pdf}}}
        \;
        =
        \;
        \mathrel{\raisebox{-1.8ex}{\includegraphics[scale=1]{CustomSummationNotation/A-kl-0.pdf}}}\;
        \mathrel{\raisebox{-3.1ex}{\includegraphics[scale=1]{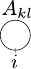}}}
    \end{equation*}
    (more correctly, these should have $k \neq l$ --- but the $k=l$ terms are zero anyway due to the appearance of the factor $\langle a_k, a_l \rangle$), so
    \begin{align*}
        \circled{1}
        &=\;
        \mathrel{\raisebox{-1.8ex}{\includegraphics[scale=1]{CustomSummationNotation/A-kl-0.pdf}}}
        \langle a_k, a_l \rangle
        A_{kl}
        \otimes 
        \underbrace{\left( 
        \mathrel{\raisebox{-3.1ex}{\includegraphics[scale=1]{CustomSummationNotation/Alk-i-0.pdf}}}\;
        \mathrel{\raisebox{-3.5ex}{\includegraphics[scale=1]{CustomSummationNotation/B-j-0.pdf}}}\;
        \langle a_i, b_j \rangle
        (A_{lk})_{ii}B_{jj}
        \right)}_{\operatorname{br}(A_{lk},B) = -\mathrm{ad}_B(A_{lk})}\\
        & \quad\quad\quad\quad\;+\;
        \mathrel{\raisebox{-1.8ex}{\includegraphics[scale=1]{CustomSummationNotation/A-kl-0.pdf}}}
        \langle a_k, a_l \rangle
        \underbrace{
        \left( 
        \mathrel{\raisebox{-3.1ex}{\includegraphics[scale=1]{CustomSummationNotation/Akl-i-0.pdf}}}\;
        \mathrel{\raisebox{-3.5ex}{\includegraphics[scale=1]{CustomSummationNotation/B-j-0.pdf}}}\;
        \langle a_i, b_j \rangle
        (A_{kl})_{ii}B_{jj}
        \right)}_{\operatorname{br}(A_{kl},B) = -\mathrm{ad}_B(A_{kl})}
        \otimes A_{lk}\\
        &=
        -(1 \otimes \mathrm{ad}_B + \mathrm{ad}_B \otimes 1)
        \left(
        \;\mathrel{\raisebox{-1.8ex}{\includegraphics[scale=1]{CustomSummationNotation/A-kl-0.pdf}}} \langle a_k, a_l \rangle A_{kl} \otimes A_{lk}
        \right)\\
        &=
        -\mathrm{ad}_B(\delta(A)).
    \end{align*}
    Similarly, $\circled{2} = \mathrm{ad}_A(\delta(B))$. Meanwhile,
    \begin{align*}
        \circled{4}
        &=\;
        \mathrel{\raisebox{-1.8ex}{\includegraphics[scale=1]{CustomSummationNotation/A-il-0.pdf}}}\;
        \mathrel{\raisebox{-1.8ex}{\includegraphics[scale=1]{CustomSummationNotation/B-jk-0.pdf}}}\;
        \langle a_i, b_j\rangle \langle b_k, a_l \rangle \underbrace{(A_{ii}B_{jj})_{kl}}_{B_{kj}A_{il}} \otimes \underbrace{(A_{ii}B_{jj})_{lk}}_{A_{li}B_{jk}}\\
        &=\;
        \mathrel{\raisebox{-1.8ex}{\includegraphics[scale=1]{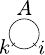}}}\;
        \mathrel{\raisebox{-1.8ex}{\includegraphics[scale=1]{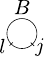}}}\;
        \langle a_k, b_l\rangle \underbrace{\langle b_j, a_i \rangle}_{-\langle a_i,b_j \rangle} \underbrace{B_{jl}A_{ki}}_{(A_{ii}B_{jj})_{kl}} \otimes \underbrace{A_{ik}B_{lj}}_{(A_{ii}B_{jj})_{lk}} \quad \text{(re-indexing with the permutation $(ikjl)$)}\\
        &= - \circled{3}.
    \end{align*}
    So, 
    \begin{equation*}
        \delta(\operatorname{br}(A,B)) = \mathrm{ad}_A(\delta{(B)}) - \mathrm{ad}_B(\delta{(A)}).
    \end{equation*}
\end{enumerate}
This completes the calculation.

\printbibliography
\end{document}